\tikzset{every picture/.append style={scale=0.45,
mydot/.style={circle,fill=blue,inner sep=0pt,minimum size=1mm},
mybead/.style={shape=circle,draw,inner sep=0pt,minimum size=2.5ex}}}
\newcommand{\mytilde}{\raise.17ex\hbox{$\scriptstyle \mathtt{\sim}$}}
\newcommand{\arxiv}[1]{\href{https://arxiv.org/abs/#1}{\texttt{arXiv:#1}}}
\newcommand{\jcta}{J.~Combin.\ Theory Ser.~A}
\newcommand{\slc}{S{\'e}m.~Lothar.\ Combin.}
\newcommand{\myi}[1]{\item\label{Item:#1}}
\NewDocumentCommand\myfig{O{t} m m m}{
\begin{figure}[#1]
\begin{center}
#4
\caption{#2}
\label{Figure:#3}
\end{center}
\end{figure}
}
\NewDocumentCommand\myfigh{m m m}{
\smallskip
\noindent
\begin{minipage}{\linewidth}
\captionsetup{type=figure}
\begin{center}
#3
\captionof{figure}{#1}
\label{Figure:#2}
\end{center}
\smallskip
\end{minipage}
}
\NewDocumentEnvironment{mymathenvironment}{m m O{}}{
	\begin{#1}[#3]\label{#1:#2}
}{
	\end{#1}
}
\newenvironment{eq*}{
	\begin{equation*}%\begin{split}
}{
	%\end{split}
	\end{equation*}
}
\newenvironment{eq}[1]{
	\begin{equation}\label{Equation:#1}
}{
	\end{equation}
}
\NewDocumentEnvironment{thm}{m}{\begin{mymathenvironment}{Theorem}{#1}}{\end{mymathenvironment}}
\NewDocumentEnvironment{cor}{m}{\begin{mymathenvironment}{Corollary}{#1}}{\end{mymathenvironment}}
\NewDocumentEnvironment{conj}{m}{\begin{mymathenvironment}{Conjecture}{#1}}{\end{mymathenvironment}}
\NewDocumentEnvironment{mydef}{m}{\begin{mymathenvironment}{Definition}{#1}}{\end{mymathenvironment}}
\NewDocumentEnvironment{ex}{m}{\begin{mymathenvironment}{Example}{#1}}{\end{mymathenvironment}}
\NewDocumentEnvironment{lem}{m}{\begin{mymathenvironment}{Lemma}{#1}}{\end{mymathenvironment}}
\NewDocumentEnvironment{prob}{m}{\begin{mymathenvironment}{Problem}{#1}}{\end{mymathenvironment}}
\NewDocumentEnvironment{prop}{m}{\begin{mymathenvironment}{Proposition}{#1}}{\end{mymathenvironment}}
\NewDocumentEnvironment{rem}{m}{\begin{mymathenvironment}{Remark}{#1}}{\end{mymathenvironment}}
\newcommand{\refx}[2]{#2~\ref{#2:#1}}
\newcommand{\refc}[1]{\refx{#1}{Corollary}}
\newcommand{\reff}[1]{\refx{#1}{Figure}}
\newcommand{\refi}[1]{\eqref{Item:#1}}
\newcommand{\refl}[1]{\refx{#1}{Lemma}}
\newcommand{\refp}[1]{\refx{#1}{Proposition}}
\newcommand{\refq}[1]{\eqref{Equation:#1}}
\newcommand{\refs}[1]{\refx{#1}{Section}}
\newcommand{\reft}[1]{\refx{#1}{Theorem}}
\newcommand{\A}{\mathcal{A}}
\newcommand{\abs}[1]{\left| #1 \right|}
\newcommand{\B}{\mathcal{B}}
\newcommand{\bop}[1]{\boldsymbol{\operatorname{#1}}}
\newcommand{\cand}{\mathcal{C}}
\newcommand{\e}{\bop{e}}
\newcommand{\F}{\mathcal{F}}
\newcommand{\HG}{\operatorname{HG}}
\newcommand{\I}{\mathcal{I}}
\newcommand{\n}{\bop{n}}
\newcommand{\N}{\mathbb{N}}
\renewcommand{\O}{\mathcal{O}}
\newcommand{\rpp}{\operatorname{RPP}}
\newcommand{\RSK}{\operatorname{RSK}}
\newcommand{\s}{\bop{s}}
\renewcommand{\S}{\mathfrak{S}}
\newcommand{\tab}{{T}}
\newcommand{\tleq}{\trianglelefteq}
\newcommand{\tr}{\operatorname{tr}}
\newcommand{\w}{\bop{w}}
\newcommand{\Z}{\mathbb{Z}}
\numberwithin{equation}{section}
\title{Inserting rim-hooks into reverse plane partitions}
\author[Robin Sulzgruber]{Robin Sulzgruber}
\thanks{Research supported by the Austrian Science Fund (FWF), grant S50-N15 in the framework of the Special Research Program ``Algorithmic and Enumerative Combinatorics'' (SFB F50). The author was partially supported by the Department of Mathematics at KTH}
\date{February 2018}
\address{Institutionen f{\"o}r matematik, KTH, Lindstedtsv{\"a}gen 25, 100~44 Stockholm, Sweden}
\subjclass[2010]{05A19}
\keywords{bijection; reverse plane partitions; RSK}
\begin{document}
% \pagenumbering{roman}
\thispagestyle{empty}
\begin{abstract}
A new algorithm for inserting rim-hooks into reverse plane partitions is presented.
The insertion is used to define a bijection between reverse plane partitions of a fixed shape and multi-sets of rim-hooks.
In turn this yields a bijective proof of the fact that the generating function for reverse plane partitions of a fixed shape, which was first obtained by R.~Stanley, factors into a product featuring the hook-lengths of this shape.
Our bijection turns out to be equivalent to a map defined by I.~Pak by different means, and can be related to the Hillman--Grassl correspondence and the Robinson--Schensted--Knuth correspondence.
\end{abstract}
\maketitle
% \addcontentsline{toc}{chapter}{Table of Contents}
% \tableofcontents
% \newpage
\pagenumbering{arabic}
\pagestyle{headings}

%--------------------------------------------------
% MAIN MATTER
%--------------------------------------------------

Reverse plane partitions, like their relatives the plane partitions and semi-standard Young tableaux, appear naturally in the context of symmetric functions and the representation theory of the symmetric group.
The generating function for reverse plane partitions was first obtained by R.~Stanley.

\begin{thm}{Stanley}\textnormal{\cite[Prop.~18.3]{Stanley1971}}
The generating function for reverse plane partitions of shape $\lambda$ is given by
\begin{eq*}%\label{eqn:eq1}
\sum_{\pi}q^{\abs{\pi}}
=\prod_{u\in\lambda}\frac{1}{1-q^{h(u)}},
\end{eq*}
where $h(u)$ denotes the hook-length of the cell $u\in\lambda$.
\end{thm}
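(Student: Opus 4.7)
The plan is to prove Stanley's formula via a weight-preserving bijection. Expanding the right-hand side as
\begin{eq*}
\prod_{u\in\lambda}\frac{1}{1-q^{h(u)}}
=\sum_{(m_u)_{u\in\lambda}}q^{\sum_{u\in\lambda}m_u h(u)},
\end{eq*}
where the outer sum runs over all families of non-negative integers indexed by the cells of $\lambda$, one identifies this series as the generating function for multi-sets of rim-hooks in $\lambda$: each cell $u$ contributes a distinguished rim-hook of size $h(u)$, and every multi-set is weighted by the sum of the sizes of its elements. It therefore suffices to construct a weight-preserving bijection between reverse plane partitions $\pi$ of shape $\lambda$ and multi-sets $M$ of such rim-hooks, satisfying $\abs{\pi}=\sum_{H\in M}\abs{H}$.

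The heart of the argument is the rim-hook insertion algorithm promised in the abstract. Given a reverse plane partition $\pi$ of shape $\lambda$ and a rim-hook $H$ of size $h(u)$ based at a cell $u\in\lambda$, the algorithm should produce a new reverse plane partition $\pi'$ of the same shape with $\abs{\pi'}=\abs{\pi}+\abs{H}$, by increasing suitable entries of $\pi$ along a path supported on $H$ while preserving the weak-increase conditions along rows and columns. Iterating this procedure, starting from the zero reverse plane partition and inserting the rim-hooks of $M$ in a canonical order (say, by decreasing base cell in reading order), defines a map $\Phi$ from multi-sets of rim-hooks to reverse plane partitions of shape $\lambda$.

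To finish the proof one has to verify that $\Phi$ is a bijection. The natural route is to exhibit an explicit inverse extraction: given a non-zero reverse plane partition $\pi$, one identifies a canonical rim-hook $H$ of $\lambda$ whose removal yields a reverse plane partition $\pi'$ with $\abs{\pi'}=\abs{\pi}-\abs{H}$, and iterates. The main obstacle will be proving that insertion and extraction are mutually inverse --- that is, that the path traced when inserting $H$ into $\pi'$ recovers precisely the path extracted from $\pi$, and that the canonical ordering used during insertion matches the order in which rim-hooks are produced by successive extractions. Structural parallels to the Hillman--Grassl correspondence and Pak's map, both announced in the abstract, should furnish the key lemmas needed to pin down these canonical paths, since analogous invariants already govern those classical constructions. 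Once bijectivity is established, the identity $\abs{\pi}=\sum_H\abs{H}$ is built into the construction and immediately yields Stanley's product formula.
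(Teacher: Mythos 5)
Your proposal takes essentially the same route as the paper: expand the product as the generating function for multi-sets of rim-hooks, construct the bijection $\Phi$ by inserting rim-hooks into the zero reverse plane partition in a fixed canonical order (the paper uses reverse lexicographic order on rim-hooks), and invert it by extracting rim-hooks along canonical paths determined by candidates. This is precisely the content of Theorem~\ref{Theorem:main}/Theorem~\ref{Theorem:bij}; the paper actually establishes the refined trace identity of Theorem~\ref{Theorem:Gansner} first, from which Theorem~\ref{Theorem:Stanley} follows by setting all variables to $q$.
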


A bijective proof of this result was found by A.~Hillman and R.~Grassl~\cite{HilGra1976}.
Their algorithm has since been related to the Robinson--Schensted--Knuth correspondence and M.~Sch{\"u}tzenberger's jeu de taquin~\cite{Gansner1981,Kadell1997}, appears in standard works~\cite{StanleyEC2,Sagan2001}, and has become an integral part of enumerative combinatorics.
In particular, E.~Gansner used the Hillman--Grassl correspondence to obtain a refined generating function.

\begin{thm}{Gansner}\textnormal{\cite[Thm.~3.2]{Gansner1981}}
The trace generating function for reverse plane partitions of shape $\lambda$ is given by
\begin{eq*}%\label{eqn:eq1}
\sum_{\pi}\prod_{k\in\Z}q_k^{\tr_k(\pi)}
%&=\sum_{\pi}\prod_{k\in\Z}\prod_{\substack{u\in\lambda\\c(u)=k}}q_{k}^{\pi(u)}
=\prod_{u\in\lambda}\frac{1}{1-q^{H(u)}}\\
%\end{equation*}
%\begin{equation*}
\quad\text{where}\quad
\tr_k(\pi)=\sum_{\substack{(i,j)\in\lambda\\j-i=k}}\pi(i,j)
\quad\text{and}\quad
q^{H(i,j)}=\prod_{k=j-\lambda_j'}^{\lambda_i-i}q_k
\,.
%=\prod_{(i,j)\in\lambda}\Big(1-\prod_{k=i-\lambda_j'}^{\lambda_i-j}q_k\Big)^{-1}.
%=\prod_{u\in\lambda}\Big(1-\prod_{v\in H(u)}q_{c(v)}\Big)^{-1}.
\end{eq*}
\end{thm}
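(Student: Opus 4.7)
The plan is to obtain Gansner's refined identity by combining the author's bijection $\Phi$ between reverse plane partitions of shape $\lambda$ and multi-sets of rim-hooks (the main result of this paper) with a diagonal-wise refinement of its weight-preserving property. First I would expand the right-hand side as a geometric series over multi-sets: writing $\frac{1}{1-q^{H(u)}}=\sum_{m\geq 0}q^{mH(u)}$ and recalling that $q^{H(u)}=\prod_{k=j-\lambda'_j}^{\lambda_i-i}q_k$ for $u=(i,j)$, the right-hand side becomes
\begin{eq*}
\sum_{M=(m_u)_{u\in\lambda}}\prod_{u\in\lambda}q^{m_uH(u)}
=\sum_{M}\prod_{k\in\Z}q_k^{\sum_{u}m_u[k\in H(u)]}.
\end{eq*}
Here I use the classical fact that the rim-hook anchored at $u=(i,j)$ occupies each diagonal $k$ with $j-\lambda'_j\leq k\leq\lambda_i-i$ in exactly one cell, and no other diagonals, so $[k\in H(u)]$ is precisely the number of cells of this rim-hook on diagonal $k$.

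Applying $\Phi$ then reduces the identity to the per-diagonal refinement
\begin{eq*}
\tr_k(\pi)=\sum_{u\in\lambda}m_u(\pi)\,[k\in H(u)]\quad\text{for all }k\in\Z,
\end{eq*}
where $m_u(\pi)$ denotes the multiplicity of the rim-hook at $u$ in $\Phi(\pi)$. Since $\Phi$ is built by iterating single rim-hook insertions, it suffices to prove the local claim that inserting a rim-hook anchored at $u$ into any RPP increments $\tr_k$ by $[k\in H(u)]$, i.e.\ by $1$ on each diagonal covered by the rim-hook and $0$ otherwise.

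The principal obstacle is verifying this local diagonal-preserving refinement. While the unrefined equality $\abs{\pi}=\sum_u m_u(\pi)\,h(u)$ (which yields Stanley's theorem) follows once each single insertion is shown to add $h(u)$ to the total, refining this to a per-diagonal statement demands a careful analysis of which cells of the RPP are altered by a single insertion and how those changes distribute over the diagonals of $\lambda$. I expect this to be visible already in the construction of the insertion itself: if the algorithm increments entries along a path that crosses each affected diagonal exactly once, the refinement is immediate. Alternatively, one could deduce it from the equivalence of $\Phi$ with Pak's map or with the Hillman--Grassl correspondence, which the abstract announces as available, together with Gansner's original argument.
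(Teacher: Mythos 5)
Your proposal is correct and matches the paper's own argument: the paper proves exactly the local claim you isolate, namely that the insertion path $P(h,\pi)$, being a north-east path with the same tail $\omega(h)$ and length $\ell(h)$ as $h$, has content set $\{c(u):u\in P\}=\{c(u):u\in h\}=\{j-\lambda_j',\dots,\lambda_i-i\}$, so each insertion increments $\tr_k$ by $1$ precisely for $k$ in this interval; combined with Theorem~\ref{Theorem:bij} this gives Gansner's identity. Your hedged "I expect this to be visible already in the construction" is indeed immediate, since every step of a north-east path increases the content by exactly~$1$.
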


Note that \reft{Stanley} is obtained from \reft{Gansner} by setting all variables equal to $q$.

More recently A.~Morales, I.~Pak and G.~Panova~\cite{MPP:skew_shapes_I} used the Hillman--Grassl correspondence to obtain a (trace) generating function for reverse plane partitions of skew shape.

\smallskip
In this paper we present an alternative bijective proof of Theorems~\ref{Theorem:Stanley} and~\ref{Theorem:Gansner}.
In \refs{bricks} we propose an algorithm for inserting rim-hooks into reverse plane partitions.
This algorithm is best perceived as a simple set of rules for building reverse plane partitions, viewed as arrangements of stacks of cubes, using bricks in the shape of rim-hooks.
The insertion procedure is formalised in Sections~\ref{Section:rimhooks} and~\ref{Section:insertion}.

\smallskip
In \refs{factors} we address the question whether the insertion of rim-hooks can be reversed.
The full answer to this question is given in \refs{bijection}.
Given a rim-hook $h$ and a reverse plane partition $\pi$, the insertion of $h$ into $\pi$ may either fail or succeed.
In the latter case a new reverse plane partition is obtained, which is denoted by $h*\pi$.
Given a multi-set of rim-hooks, it might be possible to use those rim-hooks to build multiple reverse plane partitions.
That is, there might be different ways to successively insert the rim-hooks into the zero reverse plane partition with distinct outcome.
Conversely, given a reverse plane partition there might be distinct multi-sets of rim-hooks that can be used to build it.
However, it turns out that each reverse plane partition can be built in a unique way if we demand that rim-hooks are inserted in lexicographic order.
This is the main result of this paper.

\begin{thm}{main}
Let $\lambda$ be a partition.
Then for each reverse plane partition $\pi$ of shape $\lambda$ there exists a unique ordered sequence of rim-hooks $h_1\leq\dots\leq h_s$ of $\lambda$ such that $h_i$ inserts into $h_{i+1}*\cdots*h_s*0$ for all $i\in[s]$, and
\begin{eq*}
\pi=h_1*\cdots*h_s*0
\,,
\end{eq*}%
where $0$ denotes the reverse plane partition of shape $\lambda$ with all entries equal to zero.
The correspondence $\pi\mapsto(h_1,\dots,h_s)$ gives rise to a bijection $\Phi$ between reverse plane partitions of shape $\lambda$ and multi-sets of rim-hooks of $\lambda$.
\end{thm}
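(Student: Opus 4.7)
The plan is to prove both existence and uniqueness by strong induction on the weight $|\pi|$; the base case $\pi=0$ is realised by the empty sequence. For the inductive step, let $R(\pi)$ denote the set of rim-hooks $h$ for which some RPP $\pi'$ satisfies $h*\pi'=\pi$. By the reversibility analysis of \refs{factors}, the residual $\pi'$ is uniquely determined by $h\in R(\pi)$; and since any iterated insertion producing $\pi$ has a last-inserted rim-hook lying in $R(\pi)$, the set $R(\pi)$ is nonempty whenever $\pi\neq 0$. The natural candidate for the first rim-hook of the decomposition is $h_1\defeq\min R(\pi)$ in lex order. Setting $\pi_1$ to be the unique residual with $h_1*\pi_1=\pi$, the inductive hypothesis yields a unique ordered decomposition $\pi_1 = h_2*\cdots*h_s*0$, in which $h_2=\min R(\pi_1)$ by the same construction applied to $\pi_1$.

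The heart of the argument is the following pair of monotonicity statements. First, $\min R(\pi_1)\geq\min R(\pi)$, which gives $h_1\leq h_2$ and hence an ordered decomposition $h_1\leq\cdots\leq h_s$ of $\pi$, settling existence. Second, if $h\in R(\pi)$ strictly exceeds $\min R(\pi)$ and $\pi_h$ is the corresponding residual, then $\min R(\pi_h)<h$; granted this, any alternative ordered decomposition $\pi=h_1'*\cdots*h_t'*0$ with $h_1'>h_1$ would have $h_2'=\min R(\pi_{h_1'})<h_1'$ by induction, contradicting $h_1'\leq h_2'$, so $h_1'=h_1$ and uniqueness follows inductively. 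Establishing these two statements is the main anticipated obstacle. My plan is a local exchange argument on the insertion algorithm of \refs{insertion}: given $h<h'$ both in $R(\pi)$, one traces the cells of $\pi$ affected by the two inverse insertions and disentangles their effects. Extracting $\min R(\pi)$ cannot unlock any lex-smaller rim-hook, since the geometric conditions for such a rim-hook to lie in $R(\pi_h)$ would already have been met in $\pi$, contradicting minimality; extracting any strictly larger element of $R(\pi)$ leaves $\min R(\pi)$---or some rim-hook lex-below it---in the residual's $R$-set. The detailed verification is likely a case analysis organised around the corner cells of the competing rim-hooks and the configuration of $\pi$ on the diagonals they traverse.

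With the two monotonicity statements in hand, $\Phi(\pi)\defeq\{h_1,\ldots,h_s\}$ is well-defined, and the uniqueness just proved gives injectivity. Surjectivity of $\Phi$ onto multi-sets of rim-hooks of $\lambda$ requires that the iterated insertion of any lex-sorted multi-set $\{g_1\leq\cdots\leq g_r\}$ into $0$ succeeds at each step; this should follow from a companion forward-monotonicity statement---that $g*\sigma$ is defined whenever $\sigma$ has been built from rim-hooks all lex-exceeding $g$---whose proof again reduces to a local analysis of the insertion algorithm and should be provable in parallel with the backward statements above.
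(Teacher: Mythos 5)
Your overall strategy---strong induction on $\abs{\pi}$, extracting a distinguished rim-hook at each step, and proving two monotonicity statements to control the lex order---matches the shape of the paper's proof, and if the intermediate claims held, it would work. But there is a concrete error in the setup. You claim that the residual $\pi'$ with $h*\pi'=\pi$ is uniquely determined by $h\in R(\pi)$, citing \refs{factors}; in fact that section opens by pointing out exactly the opposite (``the reverse plane partition $\tilde{\pi}$ is in general not unique''), and \reff{extract} exhibits a rim-hook extractable from a single RPP in two different ways with two different residuals. So ``the unique residual $\pi_1$ with $h_1*\pi_1=\pi$'' is ill-defined, and the induction does not get off the ground.

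The paper's fix is to parameterise an extraction not by the rim-hook alone but by a \emph{candidate}, the cell $v\in\cand(\pi)$ at which the reverse insertion path $Q(v,\pi)$ starts: given the candidate, the residual is unique (Lemmas~\ref{Lemma:factorpath} and~\ref{Lemma:factoriff}), while different candidates may give the same rim-hook but different residuals. The paper then always extracts via the content-minimal candidate $\min_{\tleq}\cand(\pi)$ rather than the lex-minimal factor rim-hook, and the whole argument lives at the candidate level: Lemmas~\ref{Lemma:candidateQ} and~\ref{Lemma:candidateP} track how candidates survive or appear under extraction, Lemmas~\ref{Lemma:crossing1} and~\ref{Lemma:crossing2} relate the candidate order $\tleq$ to the rim-hook lex order via a non-crossing property of the paths $P(h,\cdot)$ and $Q(v,\cdot)$, and \reft{noinsert} shows a failed insertion forces a candidate ahead of the insertion path, which is what makes the forward direction (\reft{welldef}) go through. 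Your two monotonicity statements are the right idea but need to be recast in these terms---and it is not even clear without the candidate machinery that $\min R(\pi)$ coincides with the rim-hook $h(\min_{\tleq}\cand(\pi),\pi)$ attached to the minimal candidate. The ``local exchange / case analysis'' you defer to is precisely this path-crossing analysis, and it is the technical core of the proof, not a routine verification.
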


\smallskip
% It is our main result (\reft{bij}) that the insertion algorithm yields a bijection between reverse plane partitions of a fixed shape $\lambda$ and the multi-sets of rim-hooks of $\lambda$.
A proof of \reft{main} is given in \refs{bijection}.
The bijection $\Phi$ is then used to prove Theorems~\ref{Theorem:Stanley} and~\ref{Theorem:Gansner}.

\smallskip
The inverse of our bijection extracts a rim-hook by decreasing a reverse plane partition along a path and is especially reminiscent of the method of A.~Hillman and R.~Grassl.
Thus, once it can be said with some confidence that the two maps do not coincide, the natural question arises whether one can be easily obtained from the other.
More generally, it is natural to ask if some of the deeper connections between the Hillman--Grassl correspondence and RSK mentioned above are also reflected in our map.
We elaborate on connections between rim-hook insertion and known bijections in \refs{connections}.

The precise relation between our map and the Hillman--Grassl correspondence remains unclear except in special cases.
Some nice observations can be made for permutation matrices and standard Young tableaux.
See Theorems~\ref{Theorem:syt} and~\ref{Theorem:rsk}.
Instead it turns out that $\Phi$ is equivalent to a different bijection that already appears in the literature, although it is much less well-known than the bijection of A.~Hillman and R.~Grassl.
I.~Pak~\cite{Pak:hook_length_formula_geometric} encountered the same map in his geometric proof of the hook-length formula based on polytopes.
His definition of the map as a concatenation of piecewise linear bijections is based on ideas from a paper of I.~Pak and A.~Postnikov~\cite{PP:oscillating_tableaux}.
I.~Pak also mentions a previous paper by A.~Berenstein and A.~Kirillov~\cite{BerKir} in which similar ideas appear in connection with jeu de taquin.
Another account on this map and its relation to RSK is due to S.~Hopkins~\cite{Hopkins:rsk} and emerged from a combinatorics seminar with A.~Postnikov held at MIT.
Among other things S.~Hopkins also observed that the map can be used to obtain \reft{Gansner}, which is the same as \cite[Cor.~16]{Hopkins:rsk}.
Our description differs significantly from the approach of I.~Pak and S.~Hopkins.
A proof that the resulting bijections are equivalent is given in \reft{pak}.

Rim-hook insertion has also served as the motivation for some recent work of A.~Garver and R.~Patrias~\cite{GarverPatrias} who found a common framework for obtaining both our bijection and the Hillman--Grassl correspondence from RSK.

\smallskip
An extended abstract of this article appeared in the proceedings of FPSAC 2017 in London~\cite{Sul:building_rpps}.

\section{Acknowledgements}

The problem that originally motivated the finding of the present insertion algorithm for rim-hooks was communicated to me by Laura Colmenarejo.
Moreover I have benefited from discussions with Alexander Garver, Sam Hopkins, Christian Krattenthaler, Rebecca Patrias and Hugh Thomas.
I am especially grateful to Sam Hopkins and Hugh Thomas for pointing out the connections between my work and previously known results.
I also thank Darij Grinberg for his thorough reading of the article and his useful comments.

\section{Building with bricks}\label{Section:bricks}

The aim of this section is to give an informal description of a simple and intuitive algorithm for building reverse plane partitions using rim-hook-shaped bricks.
A precise formal description and definitions are found in the subsequent sections.

\smallskip
A \emph{partition} is a top and left justified array of cells.
A \emph{reverse plane partition} is a top and left justified array of non-negative integers such that rows and columns are weakly increasing.
Alternatively, a reverse plane partition can be visualised as a three-dimensional object, by placing $k$ unit cubes on top of the number $k$.
\reff{partition} shows examples of these three notions.
Note that we have rotated the reverse plane partition to obtain its representation by cubes such that all stacks of cubes are visible. 

\myfigh
{The partition $\lambda=(4,3,1)$, a reverse plane partition $\pi$ of shape $\lambda$, and the representation of $\pi$ as stacks of cubes.}
{partition}
{
\begin{tikzpicture}
\begin{scope}
\draw(0,1)--(1,1)(0,2)--(3,2)(1,1)--(1,3)(2,1)--(2,3)(3,2)--(3,3)
;
\draw[line width=1pt](0,0)--(1,0)--(1,1)--(3,1)--(3,2)--(4,2)--(4,3)--(0,3)--cycle
;
\end{scope}
\begin{scope}[xshift=8cm]
\draw(0,1)--(1,1)(0,2)--(3,2)(1,1)--(1,3)(2,1)--(2,3)(3,2)--(3,3)
;
\draw[line width=1pt](0,0)--(1,0)--(1,1)--(3,1)--(3,2)--(4,2)--(4,3)--(0,3)--cycle
;
\draw[xshift=5mm,yshift=5mm]
(0,2)node{\small{$0$}}
(1,2)node{\small{$1$}}
(2,2)node{\small{$2$}}
(3,2)node{\small{$3$}}
(0,1)node{\small{$1$}}
(1,1)node{\small{$2$}}
(2,1)node{\small{$2$}}
(0,0)node{\small{$1$}};
\end{scope}
\begin{scope}[xshift=18cm,yshift=2cm,tdplot_main_coords]
\clip(3,0,0)--(3,4,0)--(0,4,0)--(0,4,1)--(0,3,1)--(1,3,1)--(1,3,2)--(1,2,2)--(1,1,2)--(2,1,2)--(2,1,3)--(2,0,3)--(3,0,3)--(3,0,0)--cycle
;
\draw[line width=1pt,fill=black!5]
(2,3,0)--(3,3,0)--(3,4,0)--(2,4,0)--cycle
(2,2,1)--(3,2,1)--(3,3,1)--(2,3,1)--cycle
(2,1,2)--(3,1,2)--(3,2,2)--(2,2,2)--cycle
(2,0,3)--(3,0,3)--(3,1,3)--(2,1,3)--cycle
(1,3,1)--(2,3,1)--(2,4,1)--(1,4,1)--cycle
(1,2,2)--(2,2,2)--(2,3,2)--(1,3,2)--cycle
(1,1,2)--(2,1,2)--(2,2,2)--(1,2,2)--cycle
(0,3,1)--(1,3,1)--(1,4,1)--(0,4,1)--cycle
;
\draw[line width=1pt,fill=black!20]
(3,2,0)--(3,3,0)--(3,3,1)--(3,2,1)--cycle
(3,1,1)--(3,2,1)--(3,2,2)--(3,1,2)--cycle
(3,1,0)--(3,2,0)--(3,2,1)--(3,1,1)--cycle
(3,0,2)--(3,1,2)--(3,1,3)--(3,0,3)--cycle
(3,0,1)--(3,1,1)--(3,1,2)--(3,0,2)--cycle
(3,0,0)--(3,1,0)--(3,1,1)--(3,0,1)--cycle
(2,3,0)--(2,4,0)--(2,4,1)--(2,3,1)--cycle
(2,2,1)--(2,3,1)--(2,3,2)--(2,2,2)--cycle
;
\draw[line width=1pt,fill=black!50]
(2,3,0)--(2,3,1)--(3,3,1)--(3,3,0)--cycle
(2,2,1)--(2,2,2)--(3,2,2)--(3,2,1)--cycle
(2,1,2)--(2,1,3)--(3,1,3)--(3,1,2)--cycle
(1,4,0)--(1,4,1)--(2,4,1)--(2,4,0)--cycle
(1,3,1)--(1,3,2)--(2,3,2)--(2,3,1)--cycle
(0,4,0)--(0,4,1)--(1,4,1)--(1,4,0)--cycle
;
\draw[line width=2pt](3,0,0)--(3,4,0)--(0,4,0)--(0,4,1)--(0,3,1)--(1,3,1)--(1,3,2)--(1,2,2)--(1,1,2)--(2,1,2)--(2,1,3)--(2,0,3)--(3,0,3)--(3,0,0)--cycle
;
\end{scope}
\end{tikzpicture}
} 

A \emph{rim-hook} (or \emph{border strip} or \emph{ribbon}) of a partition is a connected subset of cells that does not contain a two-by-two square, such that removing the rim-hook yields again a partition.
\reff{rimhooks} shows the rim-hooks of the partition $\lambda=(4,3,1)$.

\myfigh
{All eight rim-hooks of the partition $\lambda=(4,3,1)$.}
{rimhooks}
{
\begin{tikzpicture}
\begin{scope}[xshift=0cm,yshift=4cm]
\fill[black!30](3,2)--(4,2)--(4,3)--(3,3)--cycle
;
\draw(0,1)--(1,1)(0,2)--(3,2)(1,1)--(1,3)(2,1)--(2,3)(3,2)--(3,3)
;
\draw[line width=1pt](0,0)--(1,0)--(1,1)--(3,1)--(3,2)--(4,2)--(4,3)--(0,3)--cycle
;
\end{scope}
\begin{scope}[xshift=6cm,yshift=4cm]
\fill[black!30](2,1)--(3,1)--(3,2)--(2,2)--cycle
;
\draw(0,1)--(1,1)(0,2)--(3,2)(1,1)--(1,3)(2,1)--(2,3)(3,2)--(3,3)
;
\draw[line width=1pt](0,0)--(1,0)--(1,1)--(3,1)--(3,2)--(4,2)--(4,3)--(0,3)--cycle
;
\end{scope}
\begin{scope}[xshift=12cm,yshift=4cm]
\fill[black!30](2,1)--(3,1)--(3,2)--(4,2)--(4,3)--(2,3)--cycle
;
\draw(0,1)--(1,1)(0,2)--(3,2)(1,1)--(1,3)(2,1)--(2,3)(3,2)--(3,3)
;
\draw[line width=1pt](0,0)--(1,0)--(1,1)--(3,1)--(3,2)--(4,2)--(4,3)--(0,3)--cycle
;
\end{scope}
\begin{scope}[xshift=18cm,yshift=4cm]
\fill[black!30](1,1)--(3,1)--(3,2)--(1,2)--cycle
;
\draw(0,1)--(1,1)(0,2)--(3,2)(1,1)--(1,3)(2,1)--(2,3)(3,2)--(3,3)
;
\draw[line width=1pt](0,0)--(1,0)--(1,1)--(3,1)--(3,2)--(4,2)--(4,3)--(0,3)--cycle
;
\end{scope}
\begin{scope}[xshift=0cm]
\fill[black!30](1,1)--(3,1)--(3,2)--(4,2)--(4,3)--(2,3)--(2,2)--(1,2)--cycle
;
\draw(0,1)--(1,1)(0,2)--(3,2)(1,1)--(1,3)(2,1)--(2,3)(3,2)--(3,3)
;
\draw[line width=1pt](0,0)--(1,0)--(1,1)--(3,1)--(3,2)--(4,2)--(4,3)--(0,3)--cycle
;
\end{scope}
\begin{scope}[xshift=6cm]
\fill[black!30](0,0)--(1,0)--(1,1)--(0,1)--cycle
;
\draw(0,1)--(1,1)(0,2)--(3,2)(1,1)--(1,3)(2,1)--(2,3)(3,2)--(3,3)
;
\draw[line width=1pt](0,0)--(1,0)--(1,1)--(3,1)--(3,2)--(4,2)--(4,3)--(0,3)--cycle
;
\end{scope}
\begin{scope}[xshift=12cm]
\fill[black!30](0,0)--(1,0)--(1,1)--(3,1)--(3,2)--(0,2)--cycle
;
\draw(0,1)--(1,1)(0,2)--(3,2)(1,1)--(1,3)(2,1)--(2,3)(3,2)--(3,3)
;
\draw[line width=1pt](0,0)--(1,0)--(1,1)--(3,1)--(3,2)--(4,2)--(4,3)--(0,3)--cycle
;
\end{scope}
\begin{scope}[xshift=18cm]
\fill[black!30](0,0)--(1,0)--(1,1)--(3,1)--(3,2)--(4,2)--(4,3)--(2,3)--(2,2)--(0,2)--cycle
;
\draw(0,1)--(1,1)(0,2)--(3,2)(1,1)--(1,3)(2,1)--(2,3)(3,2)--(3,3)
;
\draw[line width=1pt](0,0)--(1,0)--(1,1)--(3,1)--(3,2)--(4,2)--(4,3)--(0,3)--cycle
;
\end{scope}
\end{tikzpicture}
}

For the purpose of this section we view rim-hooks as rigid three dimensional bricks of height one.
See \reff{rimhook_bricks}.

\myfigh
{Bricks in the shapes of rim-hooks.}
{rimhook_bricks}
{
\begin{tikzpicture}[tdplot_main_coords]
\begin{scope}[xshift=0cm,yshift=0cm]
\clip(3,0,0)--(3,4,0)--(0,4,0)--(0,3,0)--(1,3,0)--(1,2,0)--(1,1,0)--(2,1,0)--(2,1,1)--(2,0,1)--(3,0,1)--(3,0,0)--cycle;
\draw[line width=1pt,fill=black!5]
(2,3,0)--(3,3,0)--(3,4,0)--(2,4,0)--cycle
(2,2,0)--(3,2,0)--(3,3,0)--(2,3,0)--cycle
(2,1,0)--(3,1,0)--(3,2,0)--(2,2,0)--cycle
(2,0,1)--(3,0,1)--(3,1,1)--(2,1,1)--cycle
(1,3,0)--(2,3,0)--(2,4,0)--(1,4,0)--cycle
(1,2,0)--(2,2,0)--(2,3,0)--(1,3,0)--cycle
(1,1,0)--(2,1,0)--(2,2,0)--(1,2,0)--cycle
(0,3,0)--(1,3,0)--(1,4,0)--(0,4,0)--cycle
;
\draw[line width=1pt,fill=black!20]
(3,0,0)--(3,1,0)--(3,1,1)--(3,0,1)--cycle
;
\draw[line width=1pt,fill=black!50]
(2,1,0)--(2,1,1)--(3,1,1)--(3,1,0)--cycle
;
\draw[line width=2pt](3,0,0)--(3,4,0)--(0,4,0)--(0,3,0)--(1,3,0)--(1,2,0)--(1,1,0)--(2,1,0)--(2,1,1)--(2,0,1)--(3,0,1)--(3,0,0)--cycle;
\end{scope}
\begin{scope}[xshift=6cm,yshift=0cm]
\clip(3,0,0)--(3,4,0)--(0,4,0)--(0,3,0)--(1,3,0)--(1,2,0)--(1,2,1)--(1,1,1)--(2,1,1)--(2,1,0)--(2,0,0)--(3,0,0)--cycle;
\draw[line width=1pt,fill=black!5]
(2,3,0)--(3,3,0)--(3,4,0)--(2,4,0)--cycle
(2,2,0)--(3,2,0)--(3,3,0)--(2,3,0)--cycle
(2,1,0)--(3,1,0)--(3,2,0)--(2,2,0)--cycle
(2,0,0)--(3,0,0)--(3,1,0)--(2,1,0)--cycle
(1,3,0)--(2,3,0)--(2,4,0)--(1,4,0)--cycle
(1,2,0)--(2,2,0)--(2,3,0)--(1,3,0)--cycle
(1,1,1)--(2,1,1)--(2,2,1)--(1,2,1)--cycle
(0,3,0)--(1,3,0)--(1,4,0)--(0,4,0)--cycle
;
\draw[line width=1pt,fill=black!20]
(2,1,0)--(2,2,0)--(2,2,1)--(2,1,1)--cycle
;
\draw[line width=1pt,fill=black!50]
(1,2,0)--(1,2,1)--(2,2,1)--(2,2,0)--cycle
;
\draw[line width=2pt](3,0,0)--(3,4,0)--(0,4,0)--(0,3,0)--(1,3,0)--(1,2,0)--(1,2,1)--(1,1,1)--(2,1,1)--(2,1,0)--(2,0,0)--(3,0,0)--cycle;
\end{scope}
\begin{scope}[xshift=12cm,yshift=0cm]
\clip(3,0,0)--(3,4,0)--(0,4,0)--(0,3,0)--(1,3,0)--(1,2,0)--(1,2,1)--(1,1,1)--(2,1,1)--(2,0,1)--(3,0,1)--(3,0,0)--cycle;
\draw[line width=1pt,fill=black!5]
(2,3,0)--(3,3,0)--(3,4,0)--(2,4,0)--cycle
(2,2,0)--(3,2,0)--(3,3,0)--(2,3,0)--cycle
(2,1,1)--(3,1,1)--(3,2,1)--(2,2,1)--cycle
(2,0,1)--(3,0,1)--(3,1,1)--(2,1,1)--cycle
(1,3,0)--(2,3,0)--(2,4,0)--(1,4,0)--cycle
(1,2,0)--(2,2,0)--(2,3,0)--(1,3,0)--cycle
(1,1,1)--(2,1,1)--(2,2,1)--(1,2,1)--cycle
(0,3,0)--(1,3,0)--(1,4,0)--(0,4,0)--cycle
;
\draw[line width=1pt,fill=black!20]
(3,1,0)--(3,2,0)--(3,2,1)--(3,1,1)--cycle
(3,0,0)--(3,1,0)--(3,1,1)--(3,0,1)--cycle
;
\draw[line width=1pt,fill=black!50]
(2,2,0)--(2,2,1)--(3,2,1)--(3,2,0)--cycle
(1,2,0)--(1,2,1)--(2,2,1)--(2,2,0)--cycle
;
\draw[line width=2pt](3,0,0)--(3,4,0)--(0,4,0)--(0,3,0)--(1,3,0)--(1,2,0)--(1,2,1)--(1,1,1)--(2,1,1)--(2,0,1)--(3,0,1)--(3,0,0)--cycle;
\end{scope}
\begin{scope}[xshift=18cm,yshift=0cm]
\clip(3,0,0)--(3,4,0)--(0,4,0)--(0,3,0)--(1,3,0)--(1,3,1)--(1,2,1)--(1,1,1)--(2,1,1)--(2,1,0)--(2,0,0)--(3,0,0)--cycle;
\draw[line width=1pt,fill=black!5]
(2,3,0)--(3,3,0)--(3,4,0)--(2,4,0)--cycle
(2,2,0)--(3,2,0)--(3,3,0)--(2,3,0)--cycle
(2,1,0)--(3,1,0)--(3,2,0)--(2,2,0)--cycle
(2,0,0)--(3,0,0)--(3,1,0)--(2,1,0)--cycle
(1,3,0)--(2,3,0)--(2,4,0)--(1,4,0)--cycle
(1,2,1)--(2,2,1)--(2,3,1)--(1,3,1)--cycle
(1,1,1)--(2,1,1)--(2,2,1)--(1,2,1)--cycle
(0,3,0)--(1,3,0)--(1,4,0)--(0,4,0)--cycle
;
\draw[line width=1pt,fill=black!20]
(2,2,0)--(2,3,0)--(2,3,1)--(2,2,1)--cycle
(2,1,0)--(2,2,0)--(2,2,1)--(2,1,1)--cycle
;
\draw[line width=1pt,fill=black!50]
(1,3,0)--(1,3,1)--(2,3,1)--(2,3,0)--cycle
;
\draw[line width=2pt](3,0,0)--(3,4,0)--(0,4,0)--(0,3,0)--(1,3,0)--(1,3,1)--(1,2,1)--(1,1,1)--(2,1,1)--(2,1,0)--(2,0,0)--(3,0,0)--cycle;
\end{scope}
\begin{scope}[xshift=0cm,yshift=-4cm]
\clip(3,0,0)--(3,4,0)--(0,4,0)--(0,3,0)--(1,3,0)--(1,3,1)--(1,2,1)--(1,1,1)--(2,1,1)--(2,0,1)--(3,0,1)--(3,0,0)--cycle;
\draw[line width=1pt,fill=black!5]
(2,3,0)--(3,3,0)--(3,4,0)--(2,4,0)--cycle
(2,2,0)--(3,2,0)--(3,3,0)--(2,3,0)--cycle
(2,1,1)--(3,1,1)--(3,2,1)--(2,2,1)--cycle
(2,0,1)--(3,0,1)--(3,1,1)--(2,1,1)--cycle
(1,3,0)--(2,3,0)--(2,4,0)--(1,4,0)--cycle
(1,2,1)--(2,2,1)--(2,3,1)--(1,3,1)--cycle
(1,1,1)--(2,1,1)--(2,2,1)--(1,2,1)--cycle
(0,3,0)--(1,3,0)--(1,4,0)--(0,4,0)--cycle
;
\draw[line width=1pt,fill=black!20]
(3,1,0)--(3,2,0)--(3,2,1)--(3,1,1)--cycle
(3,0,0)--(3,1,0)--(3,1,1)--(3,0,1)--cycle
(2,2,0)--(2,3,0)--(2,3,1)--(2,2,1)--cycle
;
\draw[line width=1pt,fill=black!50]
(2,2,0)--(2,2,1)--(3,2,1)--(3,2,0)--cycle
(1,3,0)--(1,3,1)--(2,3,1)--(2,3,0)--cycle
;
\draw[line width=2pt](3,0,0)--(3,4,0)--(0,4,0)--(0,3,0)--(1,3,0)--(1,3,1)--(1,2,1)--(1,1,1)--(2,1,1)--(2,0,1)--(3,0,1)--(3,0,0)--cycle;
\end{scope}
\begin{scope}[xshift=6cm,yshift=-4cm]
\clip(3,0,0)--(3,4,0)--(0,4,0)--(0,4,1)--(0,3,1)--(1,3,1)--(1,3,0)--(1,2,0)--(1,1,0)--(2,1,0)--(2,0,0)--(3,0,0)--cycle;
\draw[line width=1pt,fill=black!5]
(2,3,0)--(3,3,0)--(3,4,0)--(2,4,0)--cycle
(2,2,0)--(3,2,0)--(3,3,0)--(2,3,0)--cycle
(2,1,0)--(3,1,0)--(3,2,0)--(2,2,0)--cycle
(2,0,0)--(3,0,0)--(3,1,0)--(2,1,0)--cycle
(1,3,0)--(2,3,0)--(2,4,0)--(1,4,0)--cycle
(1,2,0)--(2,2,0)--(2,3,0)--(1,3,0)--cycle
(1,1,0)--(2,1,0)--(2,2,0)--(1,2,0)--cycle
(0,3,1)--(1,3,1)--(1,4,1)--(0,4,1)--cycle
;
\draw[line width=1pt,fill=black!20]
(1,3,0)--(1,4,0)--(1,4,1)--(1,3,1)--cycle
;
\draw[line width=1pt,fill=black!50]
(0,4,0)--(0,4,1)--(1,4,1)--(1,4,0)--cycle
;
\draw[line width=2pt](3,0,0)--(3,4,0)--(0,4,0)--(0,4,1)--(0,3,1)--(1,3,1)--(1,3,0)--(1,2,0)--(1,1,0)--(2,1,0)--(2,0,0)--(3,0,0)--cycle;
\end{scope}
\begin{scope}[xshift=12cm,yshift=-4cm]
\clip(3,0,0)--(3,4,0)--(0,4,0)--(0,4,1)--(0,3,1)--(1,3,1)--(1,2,1)--(1,1,1)--(2,1,1)--(2,1,0)--(2,0,0)--(3,0,0)--cycle;
\draw[line width=1pt,fill=black!5]
(2,3,0)--(3,3,0)--(3,4,0)--(2,4,0)--cycle
(2,2,0)--(3,2,0)--(3,3,0)--(2,3,0)--cycle
(2,1,0)--(3,1,0)--(3,2,0)--(2,2,0)--cycle
(2,0,0)--(3,0,0)--(3,1,0)--(2,1,0)--cycle
(1,3,1)--(2,3,1)--(2,4,1)--(1,4,1)--cycle
(1,2,1)--(2,2,1)--(2,3,1)--(1,3,1)--cycle
(1,1,1)--(2,1,1)--(2,2,1)--(1,2,1)--cycle
(0,3,1)--(1,3,1)--(1,4,1)--(0,4,1)--cycle
;
\draw[line width=1pt,fill=black!20]
(2,3,0)--(2,4,0)--(2,4,1)--(2,3,1)--cycle
(2,2,0)--(2,3,0)--(2,3,1)--(2,2,1)--cycle
(2,1,0)--(2,2,0)--(2,2,1)--(2,1,1)--cycle
;
\draw[line width=1pt,fill=black!50]
(1,4,0)--(1,4,1)--(2,4,1)--(2,4,0)--cycle
(0,4,0)--(0,4,1)--(1,4,1)--(1,4,0)--cycle
;
\draw[line width=2pt](3,0,0)--(3,4,0)--(0,4,0)--(0,4,1)--(0,3,1)--(1,3,1)--(1,2,1)--(1,1,1)--(2,1,1)--(2,1,0)--(2,0,0)--(3,0,0)--cycle;
\end{scope}
\begin{scope}[xshift=18cm,yshift=-4cm]
\clip(3,0,0)--(3,4,0)--(0,4,0)--(0,4,1)--(0,3,1)--(1,3,1)--(1,2,1)--(1,1,1)--(2,1,1)--(2,0,1)--(3,0,1)--(3,0,0)--cycle;
\draw[line width=1pt,fill=black!5]
(2,3,0)--(3,3,0)--(3,4,0)--(2,4,0)--cycle
(2,2,0)--(3,2,0)--(3,3,0)--(2,3,0)--cycle
(2,1,1)--(3,1,1)--(3,2,1)--(2,2,1)--cycle
(2,0,1)--(3,0,1)--(3,1,1)--(2,1,1)--cycle
(1,3,1)--(2,3,1)--(2,4,1)--(1,4,1)--cycle
(1,2,1)--(2,2,1)--(2,3,1)--(1,3,1)--cycle
(1,1,1)--(2,1,1)--(2,2,1)--(1,2,1)--cycle
(0,3,1)--(1,3,1)--(1,4,1)--(0,4,1)--cycle
;
\draw[line width=1pt,fill=black!20]
(3,1,0)--(3,2,0)--(3,2,1)--(3,1,1)--cycle
(3,0,0)--(3,1,0)--(3,1,1)--(3,0,1)--cycle
(2,3,0)--(2,4,0)--(2,4,1)--(2,3,1)--cycle
(2,2,0)--(2,3,0)--(2,3,1)--(2,2,1)--cycle
;
\draw[line width=1pt,fill=black!50]
(2,2,0)--(2,2,1)--(3,2,1)--(3,2,0)--cycle
(1,4,0)--(1,4,1)--(2,4,1)--(2,4,0)--cycle
(0,4,0)--(0,4,1)--(1,4,1)--(1,4,0)--cycle
;
\draw[line width=2pt](3,0,0)--(3,4,0)--(0,4,0)--(0,4,1)--(0,3,1)--(1,3,1)--(1,2,1)--(1,1,1)--(2,1,1)--(2,0,1)--(3,0,1)--(3,0,0)--cycle;
\end{scope}
\end{tikzpicture}
}

A \emph{part} of a rim-hook is a maximal set of cells (cubes) contained in the same row of the partition.
For example, the bottom right rim-hook in \reff{rimhooks} has three parts of sizes two, three and one.
See \reff{parts}.

\myfigh{}{parts}{
\begin{tikzpicture}[tdplot_main_coords]
\begin{scope}[xshift=0cm,yshift=0cm]
\clip(3,0,0)--(3,4,0)--(0,4,0)--(0,4,1)--(0,3,1)--(1,3,1)--(1,2,1)--(1,1,1)--(2,1,1)--(2,0,1)--(3,0,1)--(3,0,0)--cycle;
\draw[line width=1pt,fill=black!5]
(2,3,0)--(3,3,0)--(3,4,0)--(2,4,0)--cycle
(2,2,0)--(3,2,0)--(3,3,0)--(2,3,0)--cycle
;
\draw[line width=1pt,fill=red]
(2,0,1)--(3,0,1)--(3,1,1)--(2,1,1)--cycle
(2,1,1)--(3,1,1)--(3,2,1)--(2,2,1)--cycle
;
\draw[line width=1pt,fill=yellow]
(1,3,1)--(2,3,1)--(2,4,1)--(1,4,1)--cycle
(1,2,1)--(2,2,1)--(2,3,1)--(1,3,1)--cycle
(1,1,1)--(2,1,1)--(2,2,1)--(1,2,1)--cycle
;
\draw[line width=1pt,fill=cyan]
(0,3,1)--(1,3,1)--(1,4,1)--(0,4,1)--cycle
;
\draw[line width=1pt,fill=black!20]
(3,1,0)--(3,2,0)--(3,2,1)--(3,1,1)--cycle
(3,0,0)--(3,1,0)--(3,1,1)--(3,0,1)--cycle
(2,3,0)--(2,4,0)--(2,4,1)--(2,3,1)--cycle
(2,2,0)--(2,3,0)--(2,3,1)--(2,2,1)--cycle
;
\draw[line width=1pt,fill=black!50]
(2,2,0)--(2,2,1)--(3,2,1)--(3,2,0)--cycle
(1,4,0)--(1,4,1)--(2,4,1)--(2,4,0)--cycle
(0,4,0)--(0,4,1)--(1,4,1)--(1,4,0)--cycle
;
\draw[line width=2pt](3,0,0)--(3,4,0)--(0,4,0)--(0,4,1)--(0,3,1)--(1,3,1)--(1,2,1)--(1,1,1)--(2,1,1)--(2,0,1)--(3,0,1)--(3,0,0)--cycle;
\end{scope}
\end{tikzpicture}
}

Suppose you are given a reverse plane partition $\pi$ of shape $\lambda$ and a rim-hook $h$ of $\lambda$.
To insert $h$ into $\pi$ first try placing the brick on top of the reverse plane partition such that the cells of $h$ align with the corresponding cells of the shape of $\pi$.
See \reff{drop}.

\myfigh{}
{drop}
{
\begin{tikzpicture}[tdplot_main_coords]
\begin{scope}
\begin{scope}[xshift=0cm,yshift=-2cm]
\draw[line width=2pt,->](.5,.5,1.5)--(.5,.5,0);
\end{scope}
\begin{scope}[xshift=0cm,yshift=0cm]
\clip(1,0,0)--(1,2,0)--(0,2,0)--(0,2,1)--(0,1,1)--(0,1,1)--(0,0,1)--(1,0,1)--cycle;
\draw[line width=1pt,fill=red]
(0,0,1)--(1,0,1)--(1,1,1)--(0,1,1)--cycle
(0,1,1)--(1,1,1)--(1,2,1)--(0,2,1)--cycle
;
\draw[line width=1pt,fill=black!20]
(1,0,1)--(1,0,0)--(1,1,0)--(1,1,1)--cycle
(1,1,1)--(1,1,0)--(1,2,0)--(1,2,1)--cycle
;
\draw[line width=1pt,fill=black!50]
(0,2,1)--(1,2,1)--(1,2,0)--(0,2,0)--cycle
;
\draw[line width=2pt](1,0,0)--(1,2,0)--(0,2,0)--(0,2,1)--(0,1,1)--(0,1,1)--(0,0,1)--(1,0,1)--cycle;
\end{scope}
\begin{scope}[yshift=-44mm]
\begin{scope}
\clip(3,0,0)--(3,3,0)--(0,3,0)--(0,3,1)--(0,1,1)--(0,1,1)--(0,0,1)--(3,0,1)--cycle;
\draw[line width=1pt,fill=black!5]
(0,0,1)--(1,0,1)--(1,1,1)--(0,1,1)--cycle
(0,1,1)--(1,1,1)--(1,2,1)--(0,2,1)--cycle
(0,2,1)--(1,2,1)--(1,3,1)--(0,3,1)--cycle
(1,0,1)--(2,0,1)--(2,1,1)--(1,1,1)--cycle
(1,1,0)--(2,1,0)--(2,2,0)--(1,2,0)--cycle
(1,2,0)--(2,2,0)--(2,3,0)--(1,3,0)--cycle
(2,0,1)--(3,0,1)--(3,1,1)--(2,1,1)--cycle
(2,1,0)--(3,1,0)--(3,2,0)--(2,2,0)--cycle
(2,2,0)--(3,2,0)--(3,3,0)--(2,3,0)--cycle
;
\draw[line width=1pt,fill=black!20]
(1,1,1)--(1,1,0)--(1,2,0)--(1,2,1)--cycle
(1,2,1)--(1,2,0)--(1,3,0)--(1,3,1)--cycle
(3,0,1)--(3,0,0)--(3,1,0)--(3,1,1)--cycle
;
\draw[line width=1pt,fill=black!50]
(0,3,1)--(1,3,1)--(1,3,0)--(0,3,0)--cycle
(1,1,1)--(2,1,1)--(2,1,0)--(1,1,0)--cycle
(2,1,1)--(3,1,1)--(3,1,0)--(2,1,0)--cycle
;
\draw[line width=2pt](3,0,0)--(3,3,0)--(0,3,0)--(0,3,1)--(0,1,1)--(0,1,1)--(0,0,1)--(3,0,1)--cycle;
\end{scope}
\begin{scope}[xshift=5cm]
\clip(3,0,0)--(3,3,0)--(0,3,0)--(0,3,1)--(0,2,1)--(0,2,2)--(0,1,2)--(0,1,2)--(0,0,2)--(1,0,2)--(1,0,1)--(3,0,1)--cycle;
\draw[line width=1pt,fill=black!5]
(0,2,1)--(1,2,1)--(1,3,1)--(0,3,1)--cycle
(1,0,1)--(2,0,1)--(2,1,1)--(1,1,1)--cycle
(1,1,0)--(2,1,0)--(2,2,0)--(1,2,0)--cycle
(1,2,0)--(2,2,0)--(2,3,0)--(1,3,0)--cycle
(2,0,1)--(3,0,1)--(3,1,1)--(2,1,1)--cycle
(2,1,0)--(3,1,0)--(3,2,0)--(2,2,0)--cycle
(2,2,0)--(3,2,0)--(3,3,0)--(2,3,0)--cycle
;
\draw[line width=1pt,fill=red]
(0,0,2)--(1,0,2)--(1,1,2)--(0,1,2)--cycle
(0,1,2)--(1,1,2)--(1,2,2)--(0,2,2)--cycle
;
\draw[line width=1pt,fill=black!20]
(1,0,2)--(1,0,1)--(1,1,1)--(1,1,2)--cycle
(1,1,2)--(1,1,1)--(1,2,1)--(1,2,2)--cycle
(1,1,1)--(1,1,0)--(1,2,0)--(1,2,1)--cycle
(1,2,1)--(1,2,0)--(1,3,0)--(1,3,1)--cycle
(3,0,1)--(3,0,0)--(3,1,0)--(3,1,1)--cycle
;
\draw[line width=1pt,fill=black!50]
(0,2,2)--(1,2,2)--(1,2,1)--(0,2,1)--cycle
(0,3,1)--(1,3,1)--(1,3,0)--(0,3,0)--cycle
(1,1,1)--(2,1,1)--(2,1,0)--(1,1,0)--cycle
(2,1,1)--(3,1,1)--(3,1,0)--(2,1,0)--cycle
;
\draw[line width=2pt](3,0,0)--(3,3,0)--(0,3,0)--(0,3,1)--(0,2,1)--(0,2,2)--(0,1,2)--(0,1,2)--(0,0,2)--(1,0,2)--(1,0,1)--(3,0,1)--cycle;
\end{scope}
\end{scope}
\end{scope}
\begin{scope}[xshift=15cm]
\begin{scope}[xshift=0cm,yshift=-2cm]
\draw[line width=2pt,->](.5,.5,1.5)--(.5,.5,0);
\end{scope}
\begin{scope}[xshift=0cm,yshift=0cm]
\clip(2,0,0)--(2,1,0)--(1,1,0)--(1,2,0)--(0,2,0)--(0,2,1)--(0,1,1)--(0,1,1)--(0,0,1)--(2,0,1)--cycle;
\draw[line width=1pt,fill=green]
(0,0,1)--(1,0,1)--(1,1,1)--(0,1,1)--cycle
(0,1,1)--(1,1,1)--(1,2,1)--(0,2,1)--cycle
(1,0,1)--(2,0,1)--(2,1,1)--(1,1,1)--cycle
;
\draw[line width=1pt,fill=black!20]
(1,1,1)--(1,1,0)--(1,2,0)--(1,2,1)--cycle
(2,0,1)--(2,0,0)--(2,1,0)--(2,1,1)--cycle
;
\draw[line width=1pt,fill=black!50]
(0,2,1)--(1,2,1)--(1,2,0)--(0,2,0)--cycle
(1,1,1)--(2,1,1)--(2,1,0)--(1,1,0)--cycle
;
\draw[line width=2pt](2,0,0)--(2,1,0)--(1,1,0)--(1,2,0)--(0,2,0)--(0,2,1)--(0,1,1)--(0,1,1)--(0,0,1)--(2,0,1)--cycle;
\end{scope}
\begin{scope}[yshift=-44mm]
\begin{scope}[xshift=0cm,yshift=0cm]
\clip(3,0,0)--(3,3,0)--(0,3,0)--(0,3,1)--(0,1,1)--(0,1,1)--(0,0,1)--(3,0,1)--cycle;
\draw[line width=1pt,fill=black!5]
(0,0,1)--(1,0,1)--(1,1,1)--(0,1,1)--cycle
(0,1,1)--(1,1,1)--(1,2,1)--(0,2,1)--cycle
(0,2,1)--(1,2,1)--(1,3,1)--(0,3,1)--cycle
(1,2,0)--(2,2,0)--(2,3,0)--(1,3,0)--cycle
(2,1,0)--(3,1,0)--(3,2,0)--(2,2,0)--cycle
(2,2,0)--(3,2,0)--(3,3,0)--(2,3,0)--cycle
(1,0,1)--(2,0,1)--(2,1,1)--(1,1,1)--cycle
(2,0,1)--(3,0,1)--(3,1,1)--(2,1,1)--cycle
(1,1,1)--(2,1,1)--(2,2,1)--(1,2,1)--cycle
;
\draw[line width=1pt,fill=black!20]
(1,2,1)--(1,2,0)--(1,3,0)--(1,3,1)--cycle
(2,1,1)--(2,1,0)--(2,2,0)--(2,2,1)--cycle
(3,0,1)--(3,0,0)--(3,1,0)--(3,1,1)--cycle
;
\draw[line width=1pt,fill=black!50]
(0,3,1)--(1,3,1)--(1,3,0)--(0,3,0)--cycle
(1,2,1)--(2,2,1)--(2,2,0)--(1,2,0)--cycle
(2,1,1)--(3,1,1)--(3,1,0)--(2,1,0)--cycle
;
\draw[line width=2pt](3,0,0)--(3,3,0)--(0,3,0)--(0,3,1)--(0,1,1)--(0,1,1)--(0,0,1)--(3,0,1)--cycle;
\end{scope}
\begin{scope}[xshift=5cm,yshift=0cm]
\clip(3,0,0)--(3,3,0)--(0,3,0)--(0,3,1)--(0,2,1)--(0,2,2)--(0,1,2)--(0,1,2)--(0,0,2)--(2,0,2)--(2,0,1)--(3,0,1)--cycle;
\draw[line width=1pt,fill=black!5]
(0,2,1)--(1,2,1)--(1,3,1)--(0,3,1)--cycle
(1,1,1)--(2,1,1)--(2,2,1)--(1,2,1)--cycle
(1,2,0)--(2,2,0)--(2,3,0)--(1,3,0)--cycle
(2,0,1)--(3,0,1)--(3,1,1)--(2,1,1)--cycle
(2,1,0)--(3,1,0)--(3,2,0)--(2,2,0)--cycle
(2,2,0)--(3,2,0)--(3,3,0)--(2,3,0)--cycle
;
\draw[line width=1pt,fill=green]
(0,0,2)--(1,0,2)--(1,1,2)--(0,1,2)--cycle
(0,1,2)--(1,1,2)--(1,2,2)--(0,2,2)--cycle
(1,0,2)--(2,0,2)--(2,1,2)--(1,1,2)--cycle
;
\draw[line width=1pt,fill=black!20]
(1,1,2)--(1,1,1)--(1,2,1)--(1,2,2)--cycle
(1,2,1)--(1,2,0)--(1,3,0)--(1,3,1)--cycle
(2,0,2)--(2,0,1)--(2,1,1)--(2,1,2)--cycle
(2,1,1)--(2,1,0)--(2,2,0)--(2,2,1)--cycle
(3,0,1)--(3,0,0)--(3,1,0)--(3,1,1)--cycle
;
\draw[line width=1pt,fill=black!50]
(0,2,2)--(1,2,2)--(1,2,1)--(0,2,1)--cycle
(0,3,1)--(1,3,1)--(1,3,0)--(0,3,0)--cycle
(1,1,2)--(2,1,2)--(2,1,1)--(1,1,1)--cycle
(1,2,1)--(2,2,1)--(2,2,0)--(1,2,0)--cycle
(2,1,1)--(3,1,1)--(3,1,0)--(2,1,0)--cycle
;
\draw[line width=2pt](3,0,0)--(3,3,0)--(0,3,0)--(0,3,1)--(0,2,1)--(0,2,2)--(0,1,2)--(0,1,2)--(0,0,2)--(2,0,2)--(2,0,1)--(3,0,1)--cycle;
\end{scope}
\end{scope}
\end{scope}
\end{tikzpicture}
}

This try fails if $\pi$ does not support the brick, that is, if the resulting arrangement of cubes has a hole somewhere.
In this case try the following:
Cut off the maximal number of parts of the brick that can be inserted without creating holes, and place this piece of the brick on top of the reverse plane partition.
Then shift the remainder of the brick diagonally by one and try to insert it in the new position.
See \reff{cut1}.

\myfigh{}{cut1}{
\begin{tikzpicture}[tdplot_main_coords]
\begin{scope}
\begin{scope}[xshift=0cm,yshift=-2cm]
\draw[line width=2pt,->](.5,.5,1.5)--(.5,.5,0);
\end{scope}
\begin{scope}[xshift=0cm,yshift=0cm]
\clip(3,0,0)--(3,1,0)--(0,1,0)--(0,1,1)--(0,0,1)--(3,0,1)--cycle;
\draw[line width=1pt,fill=green]
(0,0,1)--(1,0,1)--(1,1,1)--(0,1,1)--cycle
;
\draw[line width=1pt,fill=cyan]
(1,0,1)--(2,0,1)--(2,1,1)--(1,1,1)--cycle
(2,0,1)--(3,0,1)--(3,1,1)--(2,1,1)--cycle
;
\draw[line width=1pt,fill=black!20]
(3,0,1)--(3,0,0)--(3,1,0)--(3,1,1)--cycle
;
\draw[line width=1pt,fill=black!50]
(0,1,1)--(1,1,1)--(1,1,0)--(0,1,0)--cycle
(1,1,1)--(2,1,1)--(2,1,0)--(1,1,0)--cycle
(2,1,1)--(3,1,1)--(3,1,0)--(2,1,0)--cycle
;
\draw[line width=2pt](3,0,0)--(3,1,0)--(0,1,0)--(0,1,1)--(0,0,1)--(3,0,1)--cycle;
\end{scope}
\begin{scope}[yshift=-44mm]
\begin{scope}[xshift=0cm,yshift=0cm]
\clip(3,0,0)--(3,3,0)--(0,3,0)--(0,3,1)--(0,1,1)--(0,1,1)--(0,0,1)--(1,0,1)--(1,0,0)--(3,0,0)--cycle;
\draw[line width=1pt,fill=black!5]
(0,0,1)--(1,0,1)--(1,1,1)--(0,1,1)--cycle
(0,1,1)--(1,1,1)--(1,2,1)--(0,2,1)--cycle
(0,2,1)--(1,2,1)--(1,3,1)--(0,3,1)--cycle
(1,0,0)--(2,0,0)--(2,1,0)--(1,1,0)--cycle
(1,1,0)--(2,1,0)--(2,2,0)--(1,2,0)--cycle
(1,2,0)--(2,2,0)--(2,3,0)--(1,3,0)--cycle
(2,0,0)--(3,0,0)--(3,1,0)--(2,1,0)--cycle
(2,1,0)--(3,1,0)--(3,2,0)--(2,2,0)--cycle
(2,2,0)--(3,2,0)--(3,3,0)--(2,3,0)--cycle
;
\draw[line width=1pt,fill=black!20]
(1,0,1)--(1,0,0)--(1,1,0)--(1,1,1)--cycle
(1,1,1)--(1,1,0)--(1,2,0)--(1,2,1)--cycle
(1,2,1)--(1,2,0)--(1,3,0)--(1,3,1)--cycle
;
\draw[line width=1pt,fill=black!50]
(0,3,1)--(1,3,1)--(1,3,0)--(0,3,0)--cycle
;
\draw[line width=2pt](3,0,0)--(3,3,0)--(0,3,0)--(0,3,1)--(0,1,1)--(0,1,1)--(0,0,1)--(1,0,1)--(1,0,0)--(3,0,0)--cycle;
\end{scope}
\begin{scope}[xshift=5cm,yshift=0cm]
\clip(3,0,0)--(3,3,0)--(0,3,0)--(0,3,1)--(0,1,1)--(0,1,1)--(0,0,1)--(3,0,1)--cycle;
\draw[line width=1pt,fill=black!5]
(0,0,1)--(1,0,1)--(1,1,1)--(0,1,1)--cycle
(0,1,1)--(1,1,1)--(1,2,1)--(0,2,1)--cycle
(0,2,1)--(1,2,1)--(1,3,1)--(0,3,1)--cycle
(1,2,0)--(2,2,0)--(2,3,0)--(1,3,0)--cycle
(2,1,0)--(3,1,0)--(3,2,0)--(2,2,0)--cycle
(2,2,0)--(3,2,0)--(3,3,0)--(2,3,0)--cycle
;
\draw[line width=1pt,fill=cyan]
(1,0,1)--(2,0,1)--(2,1,1)--(1,1,1)--cycle
(2,0,1)--(3,0,1)--(3,1,1)--(2,1,1)--cycle
;
\draw[line width=1pt,fill=green]
(1,1,1)--(2,1,1)--(2,2,1)--(1,2,1)--cycle
;
\draw[line width=1pt,fill=black!20]
(1,2,1)--(1,2,0)--(1,3,0)--(1,3,1)--cycle
(2,1,1)--(2,1,0)--(2,2,0)--(2,2,1)--cycle
(3,0,1)--(3,0,0)--(3,1,0)--(3,1,1)--cycle
;
\draw[line width=1pt,fill=black!50]
(0,3,1)--(1,3,1)--(1,3,0)--(0,3,0)--cycle
(1,2,1)--(2,2,1)--(2,2,0)--(1,2,0)--cycle
(2,1,1)--(3,1,1)--(3,1,0)--(2,1,0)--cycle
;
\draw[line width=2pt](3,0,0)--(3,3,0)--(0,3,0)--(0,3,1)--(0,1,1)--(0,1,1)--(0,0,1)--(3,0,1)--cycle;
\end{scope}
\end{scope}
\end{scope}
\begin{scope}[xshift=15cm]
\begin{scope}[xshift=0cm,yshift=-2cm]
\draw[line width=2pt,->](.5,.5,1.5)--(.5,.5,0);
\end{scope}
\begin{scope}[xshift=0cm,yshift=0cm]
\clip(2,0,0)--(2,1,0)--(1,1,0)--(1,2,0)--(0,2,0)--(0,2,1)--(0,1,1)--(0,1,1)--(0,0,1)--(2,0,1)--cycle;
\draw[line width=1pt,fill=red]
(1,0,1)--(2,0,1)--(2,1,1)--(1,1,1)--cycle
;
\draw[line width=1pt,fill=yellow]
(0,0,1)--(1,0,1)--(1,1,1)--(0,1,1)--cycle
(0,1,1)--(1,1,1)--(1,2,1)--(0,2,1)--cycle
;
\draw[line width=1pt,fill=black!20]
(1,1,1)--(1,1,0)--(1,2,0)--(1,2,1)--cycle
(2,0,1)--(2,0,0)--(2,1,0)--(2,1,1)--cycle
;
\draw[line width=1pt,fill=black!50]
(0,2,1)--(1,2,1)--(1,2,0)--(0,2,0)--cycle
(1,1,1)--(2,1,1)--(2,1,0)--(1,1,0)--cycle
;
\draw[line width=2pt](2,0,0)--(2,1,0)--(1,1,0)--(1,2,0)--(0,2,0)--(0,2,1)--(0,1,1)--(0,1,1)--(0,0,1)--(2,0,1)--cycle;
\end{scope}
\begin{scope}[yshift=-44mm]
\begin{scope}[xshift=0cm,yshift=0cm]
\clip(3,0,0)--(3,3,0)--(0,3,0)--(0,3,1)--(0,1,1)--(0,1,1)--(0,0,1)--(1,0,1)--(1,0,0)--(3,0,0)--cycle;
\draw[line width=1pt,fill=black!5]
(0,0,1)--(1,0,1)--(1,1,1)--(0,1,1)--cycle
(0,1,1)--(1,1,1)--(1,2,1)--(0,2,1)--cycle
(0,2,1)--(1,2,1)--(1,3,1)--(0,3,1)--cycle
(1,0,0)--(2,0,0)--(2,1,0)--(1,1,0)--cycle
(1,1,0)--(2,1,0)--(2,2,0)--(1,2,0)--cycle
(1,2,0)--(2,2,0)--(2,3,0)--(1,3,0)--cycle
(2,0,0)--(3,0,0)--(3,1,0)--(2,1,0)--cycle
(2,1,0)--(3,1,0)--(3,2,0)--(2,2,0)--cycle
(2,2,0)--(3,2,0)--(3,3,0)--(2,3,0)--cycle
;
\draw[line width=1pt,fill=black!20]
(1,0,1)--(1,0,0)--(1,1,0)--(1,1,1)--cycle
(1,1,1)--(1,1,0)--(1,2,0)--(1,2,1)--cycle
(1,2,1)--(1,2,0)--(1,3,0)--(1,3,1)--cycle
;
\draw[line width=1pt,fill=black!50]
(0,3,1)--(1,3,1)--(1,3,0)--(0,3,0)--cycle
;
\draw[line width=2pt](3,0,0)--(3,3,0)--(0,3,0)--(0,3,1)--(0,1,1)--(0,1,1)--(0,0,1)--(1,0,1)--(1,0,0)--(3,0,0)--cycle;
\end{scope}
\begin{scope}[xshift=5cm,yshift=0cm]
\clip(3,0,0)--(3,3,0)--(0,3,0)--(0,3,1)--(0,1,1)--(0,1,1)--(0,0,1)--(2,0,1)--(2,0,0)--(3,0,0)--cycle;
\draw[line width=1pt,fill=black!5]
(0,0,1)--(1,0,1)--(1,1,1)--(0,1,1)--cycle
(0,1,1)--(1,1,1)--(1,2,1)--(0,2,1)--cycle
(0,2,1)--(1,2,1)--(1,3,1)--(0,3,1)--cycle
(2,0,0)--(3,0,0)--(3,1,0)--(2,1,0)--cycle
(2,1,0)--(3,1,0)--(3,2,0)--(2,2,0)--cycle
(2,2,0)--(3,2,0)--(3,3,0)--(2,3,0)--cycle
;
\draw[line width=1pt,fill=yellow]
(1,1,1)--(2,1,1)--(2,2,1)--(1,2,1)--cycle
(1,2,1)--(2,2,1)--(2,3,1)--(1,3,1)--cycle
;
\draw[line width=1pt,fill=red]
(1,0,1)--(2,0,1)--(2,1,1)--(1,1,1)--cycle
;
\draw[line width=1pt,fill=black!20]
(2,0,1)--(2,0,0)--(2,1,0)--(2,1,1)--cycle
(2,1,1)--(2,1,0)--(2,2,0)--(2,2,1)--cycle
(2,2,1)--(2,2,0)--(2,3,0)--(2,3,1)--cycle
;
\draw[line width=1pt,fill=black!50]
(0,3,1)--(1,3,1)--(1,3,0)--(0,3,0)--cycle
(1,3,1)--(2,3,1)--(2,3,0)--(1,3,0)--cycle
;
\draw[line width=2pt](3,0,0)--(3,3,0)--(0,3,0)--(0,3,1)--(0,1,1)--(0,1,1)--(0,0,1)--(2,0,1)--(2,0,0)--(3,0,0)--cycle;
\end{scope}
\end{scope}
\end{scope}
\end{tikzpicture}
}

It is not demanded that the remainder of the brick is inserted at the same height as the initial segment.
Furthermore it is allowed to cut the brick multiple times if needed, as long as all cuts happen between different parts.
See \reff{cut2} for examples.

\myfigh{}
{cut2}
{\begin{tikzpicture}[tdplot_main_coords]
\begin{scope}
\begin{scope}[xshift=0cm,yshift=-2cm]
\draw[line width=2pt,->](.5,.5,1.5)--(.5,.5,0);
\end{scope}
\begin{scope}[xshift=0cm,yshift=0cm]
\clip(2,0,0)--(2,1,0)--(0,1,0)--(0,1,1)--(0,0,1)--(2,0,1)--cycle;
\draw[line width=1pt,fill=red]
(0,0,1)--(1,0,1)--(1,1,1)--(0,1,1)--cycle
;
\draw[line width=1pt,fill=yellow]
(1,0,1)--(2,0,1)--(2,1,1)--(1,1,1)--cycle
;
\draw[line width=1pt,fill=black!20]
(2,0,1)--(2,0,0)--(2,1,0)--(2,1,1)--cycle
;
\draw[line width=1pt,fill=black!50]
(0,1,1)--(1,1,1)--(1,1,0)--(0,1,0)--cycle
(1,1,1)--(2,1,1)--(2,1,0)--(1,1,0)--cycle
;
\draw[line width=2pt](2,0,0)--(2,1,0)--(0,1,0)--(0,1,1)--(0,0,1)--(2,0,1)--cycle;
\end{scope}
\begin{scope}[yshift=-44mm]
\begin{scope}[xshift=0cm,yshift=0cm]
\clip(3,0,0)--(3,3,0)--(0,3,0)--(0,3,1)--(0,2,1)--(0,2,2)--(0,1,2)--(0,1,2)--(0,0,2)--(1,0,2)--(1,0,1)--(3,0,1)--cycle;
\draw[line width=1pt,fill=black!5]
(0,0,2)--(1,0,2)--(1,1,2)--(0,1,2)--cycle
(0,1,2)--(1,1,2)--(1,2,2)--(0,2,2)--cycle
(0,2,1)--(1,2,1)--(1,3,1)--(0,3,1)--cycle
(1,0,1)--(2,0,1)--(2,1,1)--(1,1,1)--cycle
(1,1,0)--(2,1,0)--(2,2,0)--(1,2,0)--cycle
(1,2,0)--(2,2,0)--(2,3,0)--(1,3,0)--cycle
(2,0,1)--(3,0,1)--(3,1,1)--(2,1,1)--cycle
(2,1,0)--(3,1,0)--(3,2,0)--(2,2,0)--cycle
(2,2,0)--(3,2,0)--(3,3,0)--(2,3,0)--cycle
;
\draw[line width=1pt,fill=black!20]
(1,0,2)--(1,0,1)--(1,1,1)--(1,1,2)--cycle
(1,1,2)--(1,1,1)--(1,2,1)--(1,2,2)--cycle
(1,1,1)--(1,1,0)--(1,2,0)--(1,2,1)--cycle
(1,2,1)--(1,2,0)--(1,3,0)--(1,3,1)--cycle
(3,0,1)--(3,0,0)--(3,1,0)--(3,1,1)--cycle
;
\draw[line width=1pt,fill=black!50]
(0,2,2)--(1,2,2)--(1,2,1)--(0,2,1)--cycle
(0,3,1)--(1,3,1)--(1,3,0)--(0,3,0)--cycle
(1,1,1)--(2,1,1)--(2,1,0)--(1,1,0)--cycle
(2,1,1)--(3,1,1)--(3,1,0)--(2,1,0)--cycle
;
\draw[line width=2pt](3,0,0)--(3,3,0)--(0,3,0)--(0,3,1)--(0,2,1)--(0,2,2)--(0,1,2)--(0,1,2)--(0,0,2)--(1,0,2)--(1,0,1)--(3,0,1)--cycle;
\end{scope}
\begin{scope}[xshift=5cm,yshift=0cm]
\clip(3,0,0)--(3,3,0)--(0,3,0)--(0,3,1)--(0,2,1)--(0,2,2)--(0,1,2)--(0,1,2)--(0,0,2)--(2,0,2)--(2,0,1)--(3,0,1)--cycle;
\draw[line width=1pt,fill=black!5]
(0,0,2)--(1,0,2)--(1,1,2)--(0,1,2)--cycle
(0,1,2)--(1,1,2)--(1,2,2)--(0,2,2)--cycle
(0,2,1)--(1,2,1)--(1,3,1)--(0,3,1)--cycle
(1,2,0)--(2,2,0)--(2,3,0)--(1,3,0)--cycle
(2,0,1)--(3,0,1)--(3,1,1)--(2,1,1)--cycle
(2,1,0)--(3,1,0)--(3,2,0)--(2,2,0)--cycle
(2,2,0)--(3,2,0)--(3,3,0)--(2,3,0)--cycle
;
\draw[line width=1pt,fill=yellow]
(1,0,2)--(2,0,2)--(2,1,2)--(1,1,2)--cycle
;
\draw[line width=1pt,fill=red]
(1,1,1)--(2,1,1)--(2,2,1)--(1,2,1)--cycle
;
\draw[line width=1pt,fill=black!20]
(1,1,2)--(1,1,1)--(1,2,1)--(1,2,2)--cycle
(1,2,1)--(1,2,0)--(1,3,0)--(1,3,1)--cycle
(2,0,2)--(2,0,1)--(2,1,1)--(2,1,2)--cycle
(2,1,1)--(2,1,0)--(2,2,0)--(2,2,1)--cycle
(3,0,1)--(3,0,0)--(3,1,0)--(3,1,1)--cycle
;
\draw[line width=1pt,fill=black!50]
(0,2,2)--(1,2,2)--(1,2,1)--(0,2,1)--cycle
(0,3,1)--(1,3,1)--(1,3,0)--(0,3,0)--cycle
(1,1,2)--(2,1,2)--(2,1,1)--(1,1,1)--cycle
(1,2,1)--(2,2,1)--(2,2,0)--(1,2,0)--cycle
(2,1,1)--(3,1,1)--(3,1,0)--(2,1,0)--cycle
;
\draw[line width=2pt](3,0,0)--(3,3,0)--(0,3,0)--(0,3,1)--(0,2,1)--(0,2,2)--(0,1,2)--(0,1,2)--(0,0,2)--(2,0,2)--(2,0,1)--(3,0,1)--cycle;
\end{scope}
\end{scope}
\end{scope}
\begin{scope}[xshift=15cm]
\begin{scope}[xshift=0cm,yshift=-2cm]
\draw[line width=2pt,->](.5,.5,1.5)--(.5,.5,0);
\end{scope}
\begin{scope}[xshift=0cm,yshift=0cm]
\clip(3,0,0)--(3,1,0)--(0,1,0)--(0,1,1)--(0,0,1)--(3,0,1)--cycle;
\draw[line width=1pt,fill=yellow]
(0,0,1)--(1,0,1)--(1,1,1)--(0,1,1)--cycle
;
\draw[line width=1pt,fill=cyan]
(1,0,1)--(2,0,1)--(2,1,1)--(1,1,1)--cycle
;
\draw[line width=1pt,fill=green]
(2,0,1)--(3,0,1)--(3,1,1)--(2,1,1)--cycle
;
\draw[line width=1pt,fill=black!20]
(3,0,1)--(3,0,0)--(3,1,0)--(3,1,1)--cycle
;
\draw[line width=1pt,fill=black!50]
(0,1,1)--(1,1,1)--(1,1,0)--(0,1,0)--cycle
(1,1,1)--(2,1,1)--(2,1,0)--(1,1,0)--cycle
(2,1,1)--(3,1,1)--(3,1,0)--(2,1,0)--cycle
;
\draw[line width=2pt](3,0,0)--(3,1,0)--(0,1,0)--(0,1,1)--(0,0,1)--(3,0,1)--cycle;
\end{scope}
\begin{scope}[yshift=-44mm]
\begin{scope}[xshift=0cm,yshift=0cm]
\clip(3,0,0)--(3,3,0)--(0,3,0)--(0,3,1)--(0,1,1)--(0,1,1)--(0,0,1)--(2,0,1)--(2,0,0)--(3,0,0)--cycle;
\draw[line width=1pt,fill=black!5]
(0,0,1)--(1,0,1)--(1,1,1)--(0,1,1)--cycle
(0,1,1)--(1,1,1)--(1,2,1)--(0,2,1)--cycle
(0,2,1)--(1,2,1)--(1,3,1)--(0,3,1)--cycle
(2,0,0)--(3,0,0)--(3,1,0)--(2,1,0)--cycle
(2,1,0)--(3,1,0)--(3,2,0)--(2,2,0)--cycle
(2,2,0)--(3,2,0)--(3,3,0)--(2,3,0)--cycle
(1,1,1)--(2,1,1)--(2,2,1)--(1,2,1)--cycle
(1,2,1)--(2,2,1)--(2,3,1)--(1,3,1)--cycle
(1,0,1)--(2,0,1)--(2,1,1)--(1,1,1)--cycle
;
\draw[line width=1pt,fill=black!20]
(2,0,1)--(2,0,0)--(2,1,0)--(2,1,1)--cycle
(2,1,1)--(2,1,0)--(2,2,0)--(2,2,1)--cycle
(2,2,1)--(2,2,0)--(2,3,0)--(2,3,1)--cycle
;
\draw[line width=1pt,fill=black!50]
(0,3,1)--(1,3,1)--(1,3,0)--(0,3,0)--cycle
(1,3,1)--(2,3,1)--(2,3,0)--(1,3,0)--cycle
;
\draw[line width=2pt](3,0,0)--(3,3,0)--(0,3,0)--(0,3,1)--(0,1,1)--(0,1,1)--(0,0,1)--(2,0,1)--(2,0,0)--(3,0,0)--cycle;
\end{scope}
\begin{scope}[xshift=5cm,yshift=0cm]
\clip(3,0,0)--(3,3,0)--(0,3,0)--(0,3,1)--(0,1,1)--(0,1,1)--(0,0,1)--(3,0,1)--cycle;
\draw[line width=1pt,fill=black!5]
(0,0,1)--(1,0,1)--(1,1,1)--(0,1,1)--cycle
(0,1,1)--(1,1,1)--(1,2,1)--(0,2,1)--cycle
(0,2,1)--(1,2,1)--(1,3,1)--(0,3,1)--cycle
(1,0,1)--(2,0,1)--(2,1,1)--(1,1,1)--cycle
(1,1,1)--(2,1,1)--(2,2,1)--(1,2,1)--cycle
(1,2,1)--(2,2,1)--(2,3,1)--(1,3,1)--cycle
;
\draw[line width=1pt,fill=green]
(2,0,1)--(3,0,1)--(3,1,1)--(2,1,1)--cycle
;
\draw[line width=1pt,fill=cyan]
(2,1,1)--(3,1,1)--(3,2,1)--(2,2,1)--cycle
;
\draw[line width=1pt,fill=yellow]
(2,2,1)--(3,2,1)--(3,3,1)--(2,3,1)--cycle
;
\draw[line width=1pt,fill=black!20]
(3,0,1)--(3,0,0)--(3,1,0)--(3,1,1)--cycle
(3,1,1)--(3,1,0)--(3,2,0)--(3,2,1)--cycle
(3,2,1)--(3,2,0)--(3,3,0)--(3,3,1)--cycle
;
\draw[line width=1pt,fill=black!50]
(0,3,1)--(1,3,1)--(1,3,0)--(0,3,0)--cycle
(1,3,1)--(2,3,1)--(2,3,0)--(1,3,0)--cycle
(2,3,1)--(3,3,1)--(3,3,0)--(2,3,0)--cycle
;
\draw[line width=2pt](3,0,0)--(3,3,0)--(0,3,0)--(0,3,1)--(0,1,1)--(0,1,1)--(0,0,1)--(3,0,1)--cycle;
\end{scope}
\end{scope}
\end{scope}
\end{tikzpicture}
}

If this procedure terminates successfully we say $h$ \emph{inserts} into $\pi$ and we denote the resulting reverse plane partition by $h*\pi$.
Sometimes the algorithm fails to produce a reverse plane partition.
\reff{fail} shows three examples where $h$ does not insert into $\pi$.
Note in particular the third example, in which insertion fails because it is demanded that each part of $h$ remains intact and cannot be cut in two.

\bigskip
\myfigh{}{fail}{
\begin{tikzpicture}[tdplot_main_coords]
\begin{scope}[xshift=0cm]
\begin{scope}[xshift=0cm,yshift=-2cm]
\draw[line width=2pt,->](.5,.5,1.5)--(.5,.5,0);
\end{scope}
\begin{scope}[xshift=0cm,yshift=0cm]
\clip(3,0,0)--(3,1,0)--(1,1,0)--(1,2,0)--(0,2,0)--(0,2,1)--(0,1,1)--(0,1,1)--(0,0,1)--(3,0,1)--cycle;
\draw[line width=1pt,fill=black!5]
(0,0,1)--(1,0,1)--(1,1,1)--(0,1,1)--cycle
(0,1,1)--(1,1,1)--(1,2,1)--(0,2,1)--cycle
(1,0,1)--(2,0,1)--(2,1,1)--(1,1,1)--cycle
(2,0,1)--(3,0,1)--(3,1,1)--(2,1,1)--cycle
;
\draw[line width=1pt,fill=black!20]
(1,1,1)--(1,1,0)--(1,2,0)--(1,2,1)--cycle
(3,0,1)--(3,0,0)--(3,1,0)--(3,1,1)--cycle
;
\draw[line width=1pt,fill=black!50]
(0,2,1)--(1,2,1)--(1,2,0)--(0,2,0)--cycle
(1,1,1)--(2,1,1)--(2,1,0)--(1,1,0)--cycle
(2,1,1)--(3,1,1)--(3,1,0)--(2,1,0)--cycle
;
\draw[line width=2pt](3,0,0)--(3,1,0)--(1,1,0)--(1,2,0)--(0,2,0)--(0,2,1)--(0,1,1)--(0,1,1)--(0,0,1)--(3,0,1)--cycle;
\end{scope}
\begin{scope}[xshift=0cm,yshift=-44mm]
\clip(3,0,0)--(3,3,0)--(0,3,0)--(0,3,0)--(0,2,0)--(0,2,1)--(0,1,1)--(0,1,1)--(0,0,1)--(1,0,1)--(1,0,0)--(3,0,0)--cycle;
\draw[line width=1pt,fill=black!5]
(0,0,1)--(1,0,1)--(1,1,1)--(0,1,1)--cycle
(0,1,1)--(1,1,1)--(1,2,1)--(0,2,1)--cycle
(0,2,0)--(1,2,0)--(1,3,0)--(0,3,0)--cycle
(1,0,0)--(2,0,0)--(2,1,0)--(1,1,0)--cycle
(1,1,0)--(2,1,0)--(2,2,0)--(1,2,0)--cycle
(1,2,0)--(2,2,0)--(2,3,0)--(1,3,0)--cycle
(2,0,0)--(3,0,0)--(3,1,0)--(2,1,0)--cycle
(2,1,0)--(3,1,0)--(3,2,0)--(2,2,0)--cycle
(2,2,0)--(3,2,0)--(3,3,0)--(2,3,0)--cycle
;
\draw[line width=1pt,fill=black!20]
(1,0,1)--(1,0,0)--(1,1,0)--(1,1,1)--cycle
(1,1,1)--(1,1,0)--(1,2,0)--(1,2,1)--cycle
;
\draw[line width=1pt,fill=black!50]
(0,2,1)--(1,2,1)--(1,2,0)--(0,2,0)--cycle
;
\draw[line width=2pt](3,0,0)--(3,3,0)--(0,3,0)--(0,3,0)--(0,2,0)--(0,2,1)--(0,1,1)--(0,1,1)--(0,0,1)--(1,0,1)--(1,0,0)--(3,0,0)--cycle;
\end{scope}
\end{scope}
\begin{scope}[xshift=10cm]
\begin{scope}[xshift=0cm,yshift=-2cm]
\draw[line width=2pt,->](.5,.5,1.5)--(.5,.5,0);
\end{scope}
\begin{scope}[xshift=0cm,yshift=0cm]
\clip(3,0,0)--(3,1,0)--(0,1,0)--(0,1,1)--(0,0,1)--(3,0,1)--cycle;
\draw[line width=1pt,fill=black!5]
(0,0,1)--(1,0,1)--(1,1,1)--(0,1,1)--cycle
(1,0,1)--(2,0,1)--(2,1,1)--(1,1,1)--cycle
(2,0,1)--(3,0,1)--(3,1,1)--(2,1,1)--cycle
;
\draw[line width=1pt,fill=black!20]
(3,0,1)--(3,0,0)--(3,1,0)--(3,1,1)--cycle
;
\draw[line width=1pt,fill=black!50]
(0,1,1)--(1,1,1)--(1,1,0)--(0,1,0)--cycle
(1,1,1)--(2,1,1)--(2,1,0)--(1,1,0)--cycle
(2,1,1)--(3,1,1)--(3,1,0)--(2,1,0)--cycle
;
\draw[line width=2pt](3,0,0)--(3,1,0)--(0,1,0)--(0,1,1)--(0,0,1)--(3,0,1)--cycle;
\end{scope}
\begin{scope}[xshift=0cm,yshift=-44mm]
\clip(3,0,0)--(3,3,0)--(0,3,0)--(0,3,1)--(0,1,1)--(0,1,1)--(0,0,1)--(2,0,1)--(2,0,0)--(3,0,0)--cycle;
\draw[line width=1pt,fill=black!5]
(0,0,1)--(1,0,1)--(1,1,1)--(0,1,1)--cycle
(0,1,1)--(1,1,1)--(1,2,1)--(0,2,1)--cycle
(0,2,1)--(1,2,1)--(1,3,1)--(0,3,1)--cycle
(1,0,1)--(2,0,1)--(2,1,1)--(1,1,1)--cycle
(1,1,1)--(2,1,1)--(2,2,1)--(1,2,1)--cycle
(1,2,0)--(2,2,0)--(2,3,0)--(1,3,0)--cycle
(2,0,0)--(3,0,0)--(3,1,0)--(2,1,0)--cycle
(2,1,0)--(3,1,0)--(3,2,0)--(2,2,0)--cycle
(2,2,0)--(3,2,0)--(3,3,0)--(2,3,0)--cycle
;
\draw[line width=1pt,fill=black!20]
(1,2,1)--(1,2,0)--(1,3,0)--(1,3,1)--cycle
(2,0,1)--(2,0,0)--(2,1,0)--(2,1,1)--cycle
(2,1,1)--(2,1,0)--(2,2,0)--(2,2,1)--cycle
;
\draw[line width=1pt,fill=black!50]
(0,3,1)--(1,3,1)--(1,3,0)--(0,3,0)--cycle
(1,2,1)--(2,2,1)--(2,2,0)--(1,2,0)--cycle
;
\draw[line width=2pt](3,0,0)--(3,3,0)--(0,3,0)--(0,3,1)--(0,1,1)--(0,1,1)--(0,0,1)--(2,0,1)--(2,0,0)--(3,0,0)--cycle;
\end{scope}
\end{scope}
\begin{scope}[xshift=20cm]
\begin{scope}[xshift=0cm,yshift=-2cm]
\draw[line width=2pt,->](.5,.5,1.5)--(.5,.5,0);
\end{scope}
\begin{scope}[xshift=0cm,yshift=0cm]
\clip(1,0,0)--(1,2,0)--(0,2,0)--(0,2,1)--(0,1,1)--(0,1,1)--(0,0,1)--(1,0,1)--cycle;
\draw[line width=1pt,fill=black!5]
(0,0,1)--(1,0,1)--(1,1,1)--(0,1,1)--cycle
(0,1,1)--(1,1,1)--(1,2,1)--(0,2,1)--cycle
;
\draw[line width=1pt,fill=black!20]
(1,0,1)--(1,0,0)--(1,1,0)--(1,1,1)--cycle
(1,1,1)--(1,1,0)--(1,2,0)--(1,2,1)--cycle
;
\draw[line width=1pt,fill=black!50]
(0,2,1)--(1,2,1)--(1,2,0)--(0,2,0)--cycle
;
\draw[line width=2pt](1,0,0)--(1,2,0)--(0,2,0)--(0,2,1)--(0,1,1)--(0,1,1)--(0,0,1)--(1,0,1)--cycle;
\end{scope}
\begin{scope}[xshift=0cm,yshift=-44mm]
\clip(3,0,0)--(3,3,0)--(0,3,0)--(0,3,1)--(0,1,1)--(0,1,2)--(0,0,2)--(1,0,2)--(1,0,1)--(2,0,1)--(2,0,0)--(3,0,0)--cycle;
\draw[line width=1pt,fill=black!5]
(0,0,2)--(1,0,2)--(1,1,2)--(0,1,2)--cycle
(0,1,1)--(1,1,1)--(1,2,1)--(0,2,1)--cycle
(0,2,1)--(1,2,1)--(1,3,1)--(0,3,1)--cycle
(1,0,1)--(2,0,1)--(2,1,1)--(1,1,1)--cycle
(1,1,1)--(2,1,1)--(2,2,1)--(1,2,1)--cycle
(1,2,0)--(2,2,0)--(2,3,0)--(1,3,0)--cycle
(2,0,0)--(3,0,0)--(3,1,0)--(2,1,0)--cycle
(2,1,0)--(3,1,0)--(3,2,0)--(2,2,0)--cycle
(2,2,0)--(3,2,0)--(3,3,0)--(2,3,0)--cycle
;
\draw[line width=1pt,fill=black!20]
(1,0,2)--(1,0,1)--(1,1,1)--(1,1,2)--cycle
(1,2,1)--(1,2,0)--(1,3,0)--(1,3,1)--cycle
(2,0,1)--(2,0,0)--(2,1,0)--(2,1,1)--cycle
(2,1,1)--(2,1,0)--(2,2,0)--(2,2,1)--cycle
;
\draw[line width=1pt,fill=black!50]
(0,1,2)--(1,1,2)--(1,1,1)--(0,1,1)--cycle
(0,3,1)--(1,3,1)--(1,3,0)--(0,3,0)--cycle
(1,2,1)--(2,2,1)--(2,2,0)--(1,2,0)--cycle
;
\draw[line width=2pt](3,0,0)--(3,3,0)--(0,3,0)--(0,3,1)--(0,1,1)--(0,1,2)--(0,0,2)--(1,0,2)--(1,0,1)--(2,0,1)--(2,0,0)--(3,0,0)--cycle;
\end{scope}
\end{scope}
\end{tikzpicture}
}
\bigskip

Our main results can be phrased as follows.
First of all every reverse plane partition can be built as described above using only bricks of rim-hook shape as building blocks.
Secondly, given a multi-set of bricks there is always a way to sort them (lexicographically) such that they can be successively inserted into the zero reverse plane partition.
Thirdly, each reverse plane partition can be built in a unique way such that all building blocks are inserted in lexicographic order.
Altogether we obtain a bijective correspondence between reverse plane partitions whose shape is a fixed partition $\lambda$ and the multi-sets of rim-hooks of $\lambda$.

\section{Candidates and rim-hooks}\label{Section:rimhooks}

In this section we fix notation concerning partitions and rim-hooks, and introduce some concepts that will be used throughout the remainder of the paper.

\smallskip
Let $\N=\{0,1,2,\dots\}$ denote the set of non-negative integers.
Given $n\in\N$ set $[n]=\{1,\dots,n\}$.
A \emph{cell} is a pair $(i,j)\in\Z^2$.
Denote the \emph{north}, \emph{east}, \emph{south} and \emph{west neighbours} of $u=(i,j)$ by
\begin{align*}
\n u=(i-1,j),&&
\e u=(i,j+1),&&
\s u=(i+1,j),&&
\w u=(i,j-1).
\end{align*}
A \emph{partition} $\lambda$ is a weakly decreasing sequence $\lambda_1\geq\lambda_2\geq\dots\geq\lambda_r>0$ of positive integers.
The elements $\lambda_i$ are called \emph{parts} of the partition.
The number of parts is called the \emph{length} of the partition and is denoted by $\ell(\lambda)$.
We identify each partition with a set of cells $\lambda=\{(i,j):i\in[\ell(\lambda)],j\in[\lambda_{i}]\}$ called the \emph{Young diagram} of $\lambda$.
The \emph{conjugate} of a partition $\lambda$ is the partition $\lambda'=\{(j,i):(i,j)\in\lambda\}$.
The \emph{hook} $H(u)$ of a cell $u\in\lambda$ consists of the cell $u$ itself and those cells $v\in\lambda$ that lie directly east of $u$ or directly south of $u$.
The \emph{hook-length} $h(u)=\lambda_i+\lambda_j'-i-j+1$ denotes the cardinality of $H(u)$.
For example, in the partition $\lambda=(4,3,1)$ above we have $H(1,2)=\{(1,2),(1,3),(1,4),(2,2)\}$ and thus $h(1,2)=4$.

A \emph{reverse plane partition} of shape $\lambda$ is a map $\pi:\lambda\to\N$ such that $\pi(u)\leq\pi(\e u)$ and $\pi(u)\leq\pi(\s u)$ for all $u\in\lambda$.
By convention $\pi(i,j)=0$ if $i\leq0$ or $j\leq0$ and $\pi(i,j)=\infty$ if $i,j\geq1$ but $(i,j)\notin\lambda$.
Let $\rpp_{\lambda}$ denote the set of reverse plane partitions of shape $\lambda$.
\reff{partition} shows the partition $\lambda=(4,3,1)$, a reverse plane partition $\pi$ of shape $\lambda$ and the representation of $\pi$ as an arrangement of stacks of cubes.
The map $\pi:\lambda\to\N$ defined by $\pi(u)=0$ for all $u\in\lambda$ is called the \emph{zero reverse plane partition}.
The \emph{size} of a reverse plane partition $\pi\in\rpp_{\lambda}$ is defined as $\abs{\pi}=\sum_{u\in\lambda}\pi(u)$.

\smallskip
A \emph{north-east-path} in $\lambda$ is a sequence $P=(u_0,u_1,\dots,u_{s})$ of cells $u_k\in\lambda$ such that $u_{k}\in\{\n u_{k-1},\e u_{k-1}\}$ for all $k\in[s]$.
We call $\ell(P)=s$ the \emph{length}, $\alpha(P)=u_0$ the \emph{head} and $\omega(P)=u_{s}$ the \emph{tail} of the path $P$.
Sometimes it is more convenient to consider south-west-paths instead.
A \emph{south-west-path} in $\lambda$ is a sequence $Q=(v_0,v_1,\dots,v_{s})$ of cells $v_k\in\lambda$ such that $v_k\in\{\s v_{k-1},\w v_{k-1}\}$ for all $k\in[s]$.
Denote by $P'=(u_{s},\dots,u_1,u_0)$ the \emph{reverse path} of $P$.
Clearly the reverse of a south-west-path is a north-east-path and vice versa.
Given a south-west-path $Q$, set $\ell(Q)=s$, $\alpha(Q)=v_s$ and $\omega(Q)=v_0$ so that all notions are independent of the fact whether $Q$ is regarded as a north-east-path or as a south-west-path.
That is, $\alpha(P)=\alpha(P')$, $\omega(P)=\omega(P')$ and so forth.
If no special care is required we sometimes say \emph{path} to mean either north-east-path or south-west-path or both.

\smallskip
A \emph{rim-hook} of $\lambda$ is a north-east-path $h$ in $\lambda$ such that $\s\alpha(h)\notin\lambda$, $\e\omega(h)\notin\lambda$ and $\e\s u\notin\lambda$ for all $u\in h$.
For each cell $(i,j)\in\lambda$ there is a (unique) rim-hook $h$ with $\alpha(h)=(\lambda_j',j)$ and $\omega(h)=(i,\lambda_i)$.
This correspondence is a bijection between the cells of $\lambda$ and the rim-hooks of $\lambda$.
Denote the rim-hook corresponding to the cell $u\in\lambda$ by $h^u$.
Note that the length of the rim-hook $h^u$ is equal to the hook-length of the cell $u$, that is, $\ell(h^u)=h(u)$.

\myfig{The contents of the outer corners are $-5,1,3,7$. The contents of the inner corners are $-2,2,4$.}
{innerouter}
{
\begin{tikzpicture}
\fill[black!50]
(0,2)--(1,2)--(1,1)--(2,1)--(2,0)--(3,0)--(3,1)--(2,1)--(2,2)--(1,2)--(1,3)--(0,3)--cycle
(1,7)--(2,7)--(2,6)--(3,6)--(3,5)--(4,5)--(4,4)--(5,4)--(5,3)--(6,3)--(6,4)--(7,4)--(7,5)--(6,5)--(6,6)--(5,6)--(5,7)--(4,7)--(4,8)--(1,8)--cycle
(7,7)--(8,7)--(8,6)--(9,6)--(9,5)--(10,5)--(10,6)--(9,6)--(9,7)--(8,7)--(8,8)--(7,8)--cycle
;
\fill[black!20]
(0,5)--(1,5)--(1,4)--(2,4)--(2,3)--(3,3)--(3,4)--(2,4)--(2,5)--(1,5)--(1,6)--(0,6)--cycle
(2,7)--(3,7)--(3,6)--(4,6)--(4,5)--(5,5)--(5,4)--(6,4)--(6,5)--(5,5)--(5,6)--(4,6)--(4,7)--(3,7)--(3,8)--(2,8)--cycle
(4,7)--(5,7)--(5,6)--(6,6)--(6,5)--(7,5)--(7,6)--(6,6)--(6,7)--(5,7)--(5,8)--(4,8)--cycle
;
\draw(0,1)--(3,1)(0,2)--(3,2)(0,3)--(3,3)(0,4)--(6,4)(0,5)--(7,5)(0,6)--(10,6)(0,7)--(10,7)(1,8)--(1,0)(2,8)--(2,0)(3,8)--(3,3)(4,8)--(4,3)(5,8)--(5,3)(6,8)--(6,4)(7,8)--(7,5)(8,8)--(8,5)(9,8)--(9,5)
;
\draw[very thick](0,0)--(3,0)--(3,3)--(6,3)--(6,4)--(7,4)--(7,5)--(10,5)--(10,8)--(0,8)--cycle
;
\draw[xshift=5mm,yshift=5mm]
(2,0)node{\small{$-5$}}(2,3)node{\small{$-2$}}(5,3)node{\small{$1$}}(5,4)node{\small{$2$}}(6,4)node{\small{$3$}}(6,5)node{\small{$4$}}(9,5)node{\small{$7$}}(0,0)node{\small{$\A$}}(2,5)node{\small{$\A$}}(6,6)node{\small{$\A$}}(1,3)node{\small{$\B$}}(9,7)node{\small{$\B$}}
;
\end{tikzpicture}
}

\smallskip
% Let $\lambda$ be a partition and $u\in\lambda$ a cell.
A cell $u\in\lambda$ is called \emph{outer corner} if $\e u,\s u\notin\lambda$ and \emph{inner corner} if $\e u,\s u\in\lambda$ but $\e\s u\notin\lambda$.
The \emph{content} of a cell $u=(i,j)$ is defined as
\begin{eq*}
c(u)=j-i.
\end{eq*}
Let $i_1,\dots,i_r$ be the contents of the inner corners of $\lambda$ and $o_1,\dots,o_{r+1}$ be the contents of the outer corners of $\lambda$ ordered such that
\begin{eq*}
o_1<i_1<o_2<\dots<o_r<i_r<o_{r+1}.
\end{eq*}
Divide $\lambda$ into four regions
\begin{eq*}
\begin{split}
\I&=\{u\in\lambda:c(u)=i_k\text{ for some }k\in[r]\}\\
\O&=\{u\in\lambda:c(u)=o_k\text{ for some }k\in[r+1]\}\\
\A&=\{u\in\lambda:c(u)<o_1\text{ or }i_k<c(u)<o_{k+1}\text{ for some }k\in[r]\}\\
\B&=\{u\in\lambda:o_k<c(u)<i_k\text{ for some }k\in[r]\text{ or }o_{r+1}<c(u)\}
\end{split}
\end{eq*}
See \reff{innerouter}.
The motivation for these definitions is as follows.
Let $h$ be a rim-hook of $\lambda$.
Then $\alpha(h)\in\A\cup\O$ and $\omega(h)\in\O\cup\B$.
More generally if $u$ is a cell of $h$ and $u\in\I\cup\A$ then $\e u\in h$.
Similarly, if $u\in h$ and $u\in\B\cup\I$ then also $\s u\in h$.
We will consider paths that are similar to rim-hooks in the sense that they satisfy similar properties.

\smallskip
Equip the cells of $\lambda$ with two total orders: the \emph{reverse lexicographic order} $(\lambda,\leq)$ and the \emph{content order} $(\lambda,\tleq)$.
Let $u,v\in\lambda$, $u=(i,j)$ and $v=(k,\ell)$.
Then $u\leq v$ if and only if either $j>\ell$ or $j=\ell$ and $i\geq k$.
Moreover $u\tleq v$ if and only if either $c(u)>c(v)$ or $c(u)=c(v)$ and $i\geq k$.
Both orders are indicated for the partition $\lambda=(4,3,1)$ in \reff{orders}.

\myfig{The reverse lexicographic order (left) and the content order (middle) for the cells of the partition $\lambda=(4,3,1)$.
The set of candidates $\cand(\pi)$ (right) of a reverse plane partition of shape $\lambda$.}
{orders}
{
\begin{tikzpicture}
\begin{scope}[xshift=0cm]
\draw(0,1)--(1,1)(0,2)--(3,2)(1,1)--(1,3)(2,1)--(2,3)(3,2)--(3,3)
;
\draw[black!30,line width=2pt,->](.5,2.8)--(.5,.2);
\draw[black!30,line width=2pt,->](1.5,2.8)--(1.5,1.2);
\draw[black!30,line width=2pt,->](2.5,2.8)--(2.5,1.2);
\draw[black!30,line width=2pt,->](3.5,2.8)--(3.5,2.2)
;
\draw[line width=1pt](0,0)--(1,0)--(1,1)--(3,1)--(3,2)--(4,2)--(4,3)--(0,3)--cycle
;
\draw[xshift=5mm,yshift=5mm]
(0,2)node{\small{$8$}}(1,2)node{\small{$5$}}(2,2)node{\small{$3$}}(3,2)node{\small{$1$}}(0,1)node{\small{$7$}}(1,1)node{\small{$4$}}(2,1)node{\small{$2$}}(0,0)node{\small{$6$}}
;
\end{scope}
\begin{scope}[xshift=8cm]
\draw(0,1)--(1,1)(0,2)--(3,2)(1,1)--(1,3)(2,1)--(2,3)(3,2)--(3,3)
;
\draw[black!30,line width=2pt,->](.2,.8)--(.8,.2);
\draw[black!30,line width=2pt,->](.2,1.8)--(.8,1.2);
\draw[black!30,line width=2pt,->](.2,2.8)--(1.8,1.2);
\draw[black!30,line width=2pt,->](1.2,2.8)--(2.8,1.2);
\draw[black!30,line width=2pt,->](2.2,2.8)--(2.8,2.2);
\draw[black!30,line width=2pt,->](3.2,2.8)--(3.8,2.2)
;
\draw[line width=1pt](0,0)--(1,0)--(1,1)--(3,1)--(3,2)--(4,2)--(4,3)--(0,3)--cycle
;
\draw[xshift=5mm,yshift=5mm]
(0,2)node{\small{$6$}}(1,2)node{\small{$4$}}(2,2)node{\small{$2$}}(3,2)node{\small{$1$}}(0,1)node{\small{$7$}}(1,1)node{\small{$5$}}(2,1)node{\small{$3$}}(0,0)node{\small{$8$}}
;
\end{scope}
\begin{scope}[xshift=16cm]
\fill[black!30]
(0,0)--(1,0)--(1,1)--(0,1)--cycle
(1,1)--(2,1)--(2,3)--(1,3)--cycle
(3,2)--(4,2)--(4,3)--(3,3)--cycle
;
% \fill[yellow](1,2)--(2,2)--(2,3)--(1,3)(1,1)--(2,1)--(2,2)--(1,2)(0,0)--(1,0)--(1,1)--(0,1);
\draw
(0,1)--(1,1)
(0,2)--(3,2)
(1,1)--(1,3)
(2,1)--(2,3)
(3,2)--(3,3)
;
\draw[line width=1pt](0,0)--(1,0)--(1,1)--(3,1)--(3,2)--(4,2)--(4,3)--(0,3)--cycle;
\draw[xshift=5mm,yshift=5mm]
(0,2)node{\small{$0$}}
(1,2)node{\small{$1$}}
(2,2)node{\small{$2$}}
(3,2)node{\small{$3$}}
(0,1)node{\small{$1$}}
(1,1)node{\small{$2$}}
(2,1)node{\small{$2$}}
(0,0)node{\small{$1$}};
\end{scope}
\end{tikzpicture}
}

Define a total order on the rim-hooks of $\lambda$ by letting $h^u\leq h^v$ if and only if $u\leq v$.
Equivalently, given rim-hooks $f$ and $h$ we have $f\leq h$ if and only if either $c(\alpha(f))>c(\alpha(h))$ or $c(\alpha(f))=c(\alpha(h))$ and $c(\omega(f))\leq c(\omega(h))$.
\reff{rimhooks} shows all rim-hooks of the partition $\lambda=(4,3,1)$ in reverse lexicographic order starting with the minimum.

\smallskip
Let $\pi$ be a reverse plane partition of shape $\lambda$.
Define the set of \emph{candidates} of $\pi$ as
\begin{eq*}
\cand(\pi)
=\big\{u\in\O:\pi(u)>\pi(\w u)\big\}
\cup\big\{u\in\A:\pi(u)>\pi(\w u)\text{ and }\pi(u)>\pi(\n u)\big\}.
\end{eq*}
See \reff{orders} for an example.
The motivation for the definition of candidates will become clearer later on.
Candidates are an important tool for the analysis of our insertion algorithm, especially when dealing with the reverse procedure.
For now we prove a simple criterion for the existence of candidates.

\begin{lem}{cand} Let $\lambda$ be a partition and $\pi\in\rpp_{\lambda}$.
Let $u\in\I\cup\A$ and $v\in\A\cup\O$ be cells in the same row such that $\pi(u)<\pi(v)$ and $i_k\leq c(u)<c(v)\leq o_{k+1}$ for some $k\in[r]$ or $c(u)<c(v)\leq o_1$.
Then there exists a candidate $w\in\cand(\pi)$ with $c(w)>c(u)$.
\end{lem}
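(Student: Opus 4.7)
The plan is to locate a candidate by walking north through the reverse plane partition, starting from a cell in the row of $u$ where $\pi$ strictly increases across the row.

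To launch the walk, observe that since $\pi$ is weakly increasing along rows and $\pi(u) < \pi(v)$, there must exist a cell $w$ in the same row with $c(u) < c(w) \leq c(v)$ and $\pi(w) > \pi(\w w)$. Under the hypothesis on contents, every cell strictly east of $u$ and weakly west of $v$ has content in $(i_k, o_{k+1}]$ (respectively, in $(-\infty, o_1]$), which precludes $\I$ and $\B$; hence $w \in \A \cup \O$.

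The walk itself consists in iterating the following step: if $w \in \O$, or $w \in \A$ with $\pi(w) > \pi(\n w)$, then $w \in \cand(\pi)$ and we are done; otherwise $w \in \A$ and $\pi(w) = \pi(\n w)$, and we replace $w$ by $\n w$. I would then verify that each step preserves three invariants, namely $\pi(w) > \pi(\w w)$, $c(w) > c(u)$, and $w \in \A \cup \O$. The first is immediate from
\begin{eq*}
\pi(\n w) = \pi(w) > \pi(\w w) \geq \pi(\w \n w)
\end{eq*}
by column monotonicity; the second from the fact that content increases by one when moving north; and the third because iteration only happens when $c(w)$ is strictly less than the next outer-corner content $o_{k+1}$ (resp.~$o_1$), so $c(\n w)$ remains in the same interval and does not reach any $i_j$.

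Finally, termination is automatic. The invariant $\pi(w) > \pi(\w w) \geq 0$ ensures $\pi(w) \geq 1$, so as soon as $w$ reaches the top row of $\lambda$, the convention $\pi(\n w) = 0$ forces $\pi(w) > \pi(\n w)$ and the walk halts by producing a candidate. The only mildly delicate step is checking the region invariant $w \in \A \cup \O$, which rests on the elementary arithmetic of the content sequence $o_1 < i_1 < o_2 < \dots < o_{r+1}$; everything else is a two-line verification.
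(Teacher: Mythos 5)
Your proposal is correct and follows essentially the same route as the paper: locate the first cell east of $u$ where $\pi$ jumps (the paper's $x$), then walk north to the first cell where $\pi$ also jumps, or an $\O$-cell is reached (the paper's $y$); the content hypotheses keep this walk inside $\A\cup\O$. The only cosmetic difference is that you run the northward step as an explicit iteration with invariants rather than choosing the extremal cell in one stroke, and this lets you dispense with the paper's reduction to $v\in\O$.
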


\begin{proof} First note that we may assume without loss of generality that $v\in\O$.
Choose $x=(x_1,x_2)$ in the same row of $\lambda$ as $u$ with $c(u)<c(x)\leq c(v)$ such that $x_2$ is minimal with $\pi(u)<\pi(x)$.
%If $x=v$ then $v\in\cand(\pi)$ and we are done.
Moreover choose $y=(y_1,y_2)$ in the same column of $\lambda$ as $x$, that is, $y_2=x_2$, with $c(x)\leq c(y)\leq c(v)$ such that $y_1$ is minimal with $\pi(x)=\pi(y)$.
Then $c(y)>c(u)$ and $y\in\cand(\pi)$.
\end{proof}

Note that the proof of \refl{cand} still works when $v=(i,j)$ is the southernmost outer corner of $\lambda$, that is, $j-i=o_1$, and $u=(i,0)$, even though $u$ does not belong to $\lambda$.

\begin{cor}{existcand}
Let $\lambda$ be a partition and $\pi$ a non-zero reverse plane partition.
Then $\cand(\pi)\neq\emptyset$.
\end{cor}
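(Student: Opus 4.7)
The plan is to apply \refl{cand}, or the extension of it noted in the remark immediately after its proof, to a well-chosen pair of cells lying in a single row. The pair will be produced by first locating an outer corner at which $\pi$ is positive.

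Writing $v_k$ for the outer corner of content $o_k$, my first observation is that the set of indices $k\in[r+1]$ with $\pi(v_k)>0$ is non-empty: starting from any cell where $\pi$ is positive and repeatedly moving to its east or south neighbour within $\lambda$ must terminate at an outer corner, and the values of $\pi$ only grow along the way. Let $k$ be the \emph{smallest} such index.

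If $k=1$, I invoke the extension of \refl{cand} with $v=v_1=(i,\lambda_i)$ and the virtual cell $u=(i,0)$. Since $\pi(u)=0<\pi(v_1)$ and $c(u)=-i<o_1=c(v_1)$, the extension supplies a candidate directly. If $k\geq 2$, I take $u$ to be the inner corner $w_{k-1}$ of content $i_{k-1}$. A short look at the staircase of $\lambda$ between the two consecutive outer corners $v_{k-1}$ and $v_k$ shows that $w_{k-1}$ lies in the same row as $v_k$ and in the same column as $v_{k-1}$; column monotonicity of $\pi$ together with the minimality of $k$ then give $\pi(w_{k-1})\leq\pi(v_{k-1})=0<\pi(v_k)$. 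The pair $u=w_{k-1}\in\I$, $v=v_k\in\O$ lies in a common row and satisfies $i_{k-1}=c(u)<c(v)=o_k$, so \refl{cand} applies and yields a candidate.

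The only point that requires more than routine verification is the strict inequality $\pi(u)<\pi(v)$, and that is precisely where the minimality of $k$ is used: it ensures that $\pi$ still vanishes at $v_{k-1}$ (and hence at $w_{k-1}$, which sits directly above $v_{k-1}$ in the same column) while being positive at $v_k$. Once this is in place, constructing the candidate is outsourced to (the extension of) \refl{cand}.
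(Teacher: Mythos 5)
Your proof is correct and takes essentially the same approach as the paper: both arguments choose $v$ to be the southernmost outer corner at which $\pi$ is positive and then invoke \refl{cand} (or its remark) with a suitable cell $u$ in the same row, using the minimality of that outer corner together with column monotonicity to get $\pi(u)=0<\pi(v)$. The only difference is the precise choice of $u$: you take the inner corner $w_{k-1}$ itself, while the paper takes the easternmost cell of value $0$ in that row; your choice makes the verification of the hypotheses of \refl{cand} (in particular that $u\in\I\cup\A$ and $i_{k-1}\leq c(u)$) slightly more transparent, but the underlying observation about the staircase --- that the inner corner sits directly above the preceding outer corner and in the same row as the next one --- is exactly what the paper is implicitly using.
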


\begin{proof}
Apply \refl{cand} to the cells $u,v$, where $v=(i,j)$ is the southernmost outer corner of $\lambda$ with $\pi(v)>0$, and $u$ is the easternmost cell in the same row as $v$ with $\pi(v)=0$.
Note that if $j-i=o_{k+1}$ for some $k\in[r]$, then $u\in\I\cup\A$ and $i_k\leq c(u)<o_{k+1}$.
If $j-i=o_1$ then $u\in\A$ or $u=(i,0)$ as in the remark above.
\end{proof}

\section{Inserting rim-hooks}\label{Section:insertion}

This section contains a formal definition of the insertion algorithm described in \refs{bricks}.

\smallskip
The insertion works by increasing a reverse plane partition along a path.
Let $\pi$ be a reverse plane partition of shape $\lambda$ and $P$ be a path in $\lambda$.
Define the maps $\pi\pm P:\lambda\to\Z$ by
\begin{align*}
(\pi\pm P)(u)&=
\begin{cases}
\pi(u)\pm1&\quad\text{if }u\in P,\\
\pi(u)&\quad\text{otherwise.}
\end{cases}
\end{align*}
We call the pair $(P,\pi)$ \emph{compatible} if the following two conditions are fulfilled.
\begin{align}\label{Equation:shh}
\text{If }u\in P\text{ and }u\in\I\cup\A&\text{ then }\e u\in P\text{ and }\pi(u)=\pi(\e u).\\
\label{Equation:shp}
\text{If }u,\s u\in P&\text{ then }\pi(u)=\pi(\s u).
\end{align}
We say a rim-hook $h$ of $\lambda$ \emph{inserts} into $\pi$ if there exists a path $P$ such that $\omega(P)=\omega(h)$, $\ell(P)=\ell(h)$, $(P,\pi)$ is compatible and $\pi+P$ is a reverse plane partition.

\smallskip
Our first goal is to show that such a path is unique if it exists at all.
Given a rim-hook $h$ of $\lambda$ construct a south-west-path $P(h,\pi)$ as follows. Let $\omega(P(h,\pi))=\omega(h)$ and while $\ell(P(h,\pi))<\ell(h)$ if $u$ is the current cell then move to
\begin{align*}%\label{eq:Ph}
\text{the cell}\quad
\begin{cases}
\s u &\quad\text{if }u\in{\B\cup\I}\text{ and }\pi(u)=\pi(\s u),\\
\w u &\quad\text{otherwise.}
\end{cases}
\end{align*}

\begin{lem}{insertpath} Let $\lambda$ be a partition, $\pi\in\rpp_{\lambda}$, $h$ a rim-hook of $\lambda$ inserting into $\pi$ and let $P$ be a south-west-path in $\lambda$ such that $\omega(P)=\omega(h)$, $\ell(P)=\ell(h)$, $(P,\pi)$ is compatible and $\pi+P$ is a reverse plane partition.
Then $P=P(h,\pi)$.
\end{lem}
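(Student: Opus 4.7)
The plan is to prove $P=P(h,\pi)$ by induction on the step index of the south-west-path. Write $v_0,v_1,\ldots,v_s$ for the cells of either path, noting that both begin with $v_0=\omega(h)$. Assume inductively that $v_0,\ldots,v_k$ agree in $P$ and $P(h,\pi)$, and set $u=v_k$. By construction, $P(h,\pi)$ advances to $\s u$ precisely when $u\in\B\cup\I$ and $\pi(u)=\pi(\s u)$, and to $\w u$ otherwise; the task is to show $P$ is forced into the same choice. The main tool throughout is the geometric fact that in a south-west-path the row coordinate is non-decreasing and the column coordinate is non-increasing as the index grows.

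Suppose first that the algorithm moves south at $u$ but that $P$ moves west, so $v_{k+1}=\w u$. Then $\s u$ has row strictly larger than that of $v_k$, so it cannot coincide with any $v_j$ for $j\leq k$, and its column is strictly larger than that of $v_{k+1}=\w u$, so it cannot be any $v_j$ with $j\geq k+1$ either. Thus $\s u\notin P$, which gives $(\pi+P)(u)=\pi(u)+1>\pi(\s u)=(\pi+P)(\s u)$ and contradicts $\pi+P\in\rpp_{\lambda}$. Hence $v_{k+1}=\s u$ in $P$ as well.

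Suppose next that the algorithm moves west at $u$ but that $P$ moves south, $v_{k+1}=\s u$. If $u\in\O$ then $\s u\notin\lambda$, which is impossible. If $u\in\B\cup\I$, then the algorithm's branching condition and the reverse plane partition inequality for $\pi$ together force $\pi(u)<\pi(\s u)$, while compatibility condition~\refq{shp} applied to $u,\s u\in P$ forces $\pi(u)=\pi(\s u)$---a contradiction. The remaining case $u\in\A$ is the main obstacle: here $c(\s u)=c(u)-1$ places $\s u$ into $\I\cup\A$, so compatibility condition~\refq{shh} applied at $\s u$ requires $\e\s u=\s\e u\in P$. But $\s\e u$ has row strictly larger than that of $v_k$ and column strictly larger than that of $v_{k+1}$, and the same monotonicity argument rules out $\s\e u\in P$. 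This contradiction completes the induction step, so $P=P(h,\pi)$.

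I anticipate the subcase $u\in\A$ to be the main obstacle, being the only branch where excluding a southward deviation requires combining compatibility condition~\refq{shh} with the structural constraint that a south-west-path can reach a cell lying strictly east and strictly south of an interior vertex of the path from neither direction; the other subcases are ruled out almost immediately by either the region definitions, the reverse plane partition condition on $\pi+P$, or compatibility condition~\refq{shp}.
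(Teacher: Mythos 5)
Your strategy — showing by induction along the path that compatibility and the reverse-plane-partition condition on $\pi+P$ force $P$ to take the algorithm's step at each cell — is the same as the paper's, and the first case as well as the $u\in\B\cup\I$ and $u\in\A$ sub-cases of the second case are argued correctly. However, the sub-case $u\in\O$ rests on a false claim: you assert that $u\in\O$ implies $\s u\notin\lambda$, but $\O$ consists of \emph{all} cells whose content equals that of an outer corner, not only the outer corners themselves. For instance, in $\lambda=(2,2)$ the cell $(1,1)$ lies in $\O$ (its content is $0=o_1$) yet $\s(1,1)=(2,1)\in\lambda$; and such cells do occur on the relevant paths, e.g.\ the south-west-path $\bigl((1,3),(1,2),(1,1),(2,1),(3,1)\bigr)$ in $\lambda=(3,3,3)$ passes through $(1,1)$ and $(2,2)$, both in $\O$ with south neighbours in $\lambda$. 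So as written this branch does not close.

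The repair is to treat $u\in\O$ by the same mechanism you used for $u\in\A$. If $u\in\O$ with $c(u)=o_k$ and $\s u\in\lambda$, then $c(\s u)=o_k-1$ satisfies $c(\s u)<o_1$ when $k=1$, and $i_{k-1}\le c(\s u)<o_k$ when $k\ge 2$ (using $o_{k-1}<i_{k-1}<o_k$), so $\s u\in\I\cup\A$ in every case. Condition~\refq{shh} applied to $\s u\in P$ would then force $\s\e u\in P$, which is impossible since $\s\e u$ has the same content as $u=v_k$ while the cells of a south-west-path have pairwise distinct contents (or, as you argue, by row/column monotonicity). More efficiently, observe that for every $u\in\A\cup\O$ with $\s u\in\lambda$ one has $\s u\in\I\cup\A$, so the $\A$ and $\O$ sub-cases can simply be merged into a single argument; with that correction your proof matches the paper's.
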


\begin{proof} Suppose $P$ is a south-west-path in $\lambda$ such that $\omega(P)=\omega(h)$, $\ell(P)=\ell(h)$, $(P,\pi)$ is compatible and $\pi+P$ is a reverse plane partition.
Suppose $u\in P$ with $u\triangleleft\alpha(P)$.
If $u\in\B\cup\I$ such that $\pi(u)=\pi(\s u)$ then $\s u\in P$ because $\pi+P$ is a reverse plane partition.
Otherwise $\w u\in P$ because $(P,\pi)$ is compatible.
Hence $P$ agrees with the construction of $P(h,\pi)$ and the claim follows.
\end{proof}

As a consequence of \refl{insertpath} if a rim-hook $h$ inserts into a reverse plane partition $\pi$ then $P(h,\pi)$ is the unique south-west-path $P$ such that $\omega(P)=\omega(h)$, $\ell(P)=\ell(h)$, $(P,\pi)$ is compatible and $\pi+P$ is a reverse plane partition.
In this case denote $h*\pi=\pi+P(h,\pi)$.

It is not difficult to verify that our definition of $h*\pi$ agrees with the examples of \refs{bricks}.
For example \reff{swpaths} shows the insertion paths corresponding to \reff{cut1}.

\myfig{The paths $P(h^u,\pi)$ for the cells $u=(1,3)$ (left) and $u=(2,2)$ (right).}
{swpaths}
{
\begin{tikzpicture}
\begin{scope}
\fill[black!30](1,1)--(3,1)--(3,3)--(2,3)--(2,2)--(1,2)--cycle
;
\draw(0,0)grid(3,3)
;
\draw[line width=1pt](0,0)--(0,3)--(3,3)--(3,0)--cycle
;
\draw[xshift=5mm,yshift=5mm]
(0,2)node{\small{$0$}}(1,2)node{\small{$0$}}(2,2)node{\small{$0$}}(0,1)node{\small{$0$}}(1,1)node{\small{$0$}}(2,1)node{\small{$0$}}(0,0)node{\small{$1$}}(1,0)node{\small{$1$}}(2,0)node{\small{$1$}}
;
\end{scope}
\begin{scope}[xshift=6cm]
\fill[black!30](0,1)--(3,1)--(3,2)--(0,2)--cycle
;
\draw(0,0)grid(3,3)
;
\draw[line width=1pt](0,0)--(0,3)--(3,3)--(3,0)--cycle
;
\draw[xshift=5mm,yshift=5mm]
(0,2)node{\small{$0$}}(1,2)node{\small{$0$}}(2,2)node{\small{$0$}}(0,1)node{\small{$0$}}(1,1)node{\small{$0$}}(2,1)node{\small{$0$}}(0,0)node{\small{$1$}}(1,0)node{\small{$1$}}(2,0)node{\small{$1$}}
;
\end{scope}
\end{tikzpicture}
}
\smallskip
We finish this section by proving a necessary condition for when the insertion algorithm fails.
At the same time this result is a first indication of the importance of candidates.

\begin{thm}{noinsert} Let $\lambda$ be a partition, $\pi\in\rpp_{\lambda}$ and $h$ a rim-hook of $\lambda$ that does not insert into $\pi$.
Then there exists a candidate $u\in\cand(\pi)$ with $u\triangleleft\alpha(P(h,\pi))$.
\end{thm}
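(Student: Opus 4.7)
The plan is to analyse the possible failure modes of the greedy construction $P=P(h,\pi)$ and, in each mode, to apply \refl{cand} to exhibit a candidate strictly east (in content order) of $\alpha(P)$.

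First I would record the structural observation: for $u\in\I\cup\A$, if $\n u\in\lambda$ then $\n u\in\A\cup\O$, since $c(\n u)=c(u)+1$ stays in an interval of the form $(i_k,o_{k+1}]$. Consequently a cell $u=v_k\in P\cap(\I\cup\A)$ with $k\geq 1$ cannot be reached via a south-move, so its predecessor in $P$ must be $\e u$; in particular $\e u\in P$ automatically. Thus compatibility condition~\eqref{Equation:shh} can fail only through $\pi(u)<\pi(\e u)$ for some $u\in P\cap(\I\cup\A)$. As $c(\e u)\leq o_{k+1}$ places $\e u\in\A\cup\O$, \refl{cand} applied to the pair $(u,\e u)$ furnishes a candidate $w$ with $c(w)>c(u)\geq c(\alpha(P))$, hence $w\triangleleft\alpha(P)$.

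If compatibility holds, I would next treat the failure of $\pi+P$ to be a reverse plane partition. Such a failure isolates an adjacent pair $(u,v)$ with $u\in P$, $v\notin P$, and $\pi(u)=\pi(v)$. Combined with compatibility and the structural observation, this restricts to either $u\in\B$ with $v=\e u$, or $u\in\A$ with $v=\s u$. In the first subcase $\e u\notin P$ forces the predecessor of $u$ on $P$ to be $\n u$, so the south-move rule yields $\pi(\n u)=\pi(u)=\pi(\e u)$; propagating this equality eastward along the row of $\n u$ via the RPP inequalities until a strict increase appears produces a pair eligible for \refl{cand}, and hence a candidate east of $c(u)\geq c(\alpha(P))$. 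The second subcase is handled symmetrically by propagating $\pi(u)=\pi(\s u)$ down the column of $\s u$ until a strict inequality appears.

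The remaining failure mode is that the construction steps outside $\lambda$ via westward moves, so that $\alpha(P)=(a,j)$ with $j\leq 0$ and $c(\alpha(P))=j-a=c(\alpha(h))$. The last cell of $P$ inside $\lambda$ is $(a,1)$, and the greedy rule chose a westward move there either because $(a,1)\in\A\cup\O$, or because $(a,1)\in\B\cup\I$ with $\pi(a,1)<\pi(\s(a,1))$. In the first case I would apply \refl{cand}, or its extension from the remark following the lemma (with $u=(i,0)$ and $v$ the southernmost outer corner of content $o_1$), to locate a candidate; in the second case I would propagate the strict inequality $\pi(a,1)<\pi(\s(a,1))$ down column one as in the RPP-failure case. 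When the resulting candidate $w$ satisfies $c(w)=c(\alpha(P))$, one verifies that $w$ lies in a row strictly greater than $a$, so $w\triangleleft\alpha(P)$ in that case as well. The main obstacle is the chain-chasing required in the RPP-failure and column-zero cases: the equality chains triggered by the greedy rules may need to be propagated several cells before a strict RPP inequality is encountered, and one must verify at the end that the candidate returned by \refl{cand} satisfies either $c(w)>c(\alpha(P))$, or $c(w)=c(\alpha(P))$ and $w_1>(\alpha(P))_1$.
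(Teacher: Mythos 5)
Your decomposition into three failure modes is sound, and your treatment of the first one (compatibility failure via $\pi(u)<\pi(\e u)$ with $u\in\I\cup\A$ and $\e u\in P$ forced) matches the paper's Case~1 exactly. The structural observation that a south step never lands in $\I\cup\A$, so the predecessor must be $\e u$, is the same fact the paper uses implicitly, and the appeal to \refl{cand} is the right move.

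Your treatment of the second failure mode (compatible but $\pi+P$ not an RPP) takes a genuinely different route from the paper, and it is less crisp. The paper picks the $\tleq$-minimal $u\in P$ satisfying \refq{eu} or \refq{su}, shows that \refq{eu} would contradict minimality (by passing the violation to $\n u$ or $\e\n u$, each of strictly larger content), and then for \refq{su} exhibits the candidate $\e\s u$ directly: $\pi(\e\s u)>\pi(\e u)$ by minimality and $\pi(\e\s u)>\pi(\s u)$ by the RPP inequality, so no chain-chasing is needed. Your proposal instead propagates the equality eastward or southward until a strict increase appears and then invokes \refl{cand}; this can be made to work but requires checking that the pair one eventually feeds to \refl{cand} actually lies in $\I\cup\A$ and $\A\cup\O$ with the required content window, which your sketch does not verify. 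Also, your case split ``$u\in\B$ with $v=\e u$, or $u\in\A$ with $v=\s u$'' omits $u\in\O$, which is possible in both configurations (for $v=\e u$ the predecessor can be $\n u$ with $u\in\O$; for $v=\s u$ the paper's argument explicitly handles $u\in\O$). These are not hard to repair, but as written the case analysis is incomplete.

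Your third failure mode --- the greedy construction stepping westward out of $\lambda$ so that $\alpha(P(h,\pi))$ has a non-positive column --- is a real case, and the paper's published proof does not explicitly address it: for example with $\lambda=(1,1,1)$, $\pi=(0,0,1)$ and $h=h^{(1,1)}$ one gets $P(h,\pi)=((1,1),(2,1),(2,0))$, which satisfies \refq{shh} and has $\pi+P$ an RPP on $\lambda$, so neither of the paper's two cases fires even though $h$ does not insert. Crediting you for noticing this, your proposed resolution is still only a sketch: you correctly observe that $\alpha(P)$ ends in row $a$ with $a<1-c(\alpha(h))$, but the candidate produced by the remark after \refl{cand} (with $v$ the southernmost outer corner) only has content $\geq o_1$, which need not reach $c(\alpha(h))$; one has to work along column~$1$ down to the cell of content $c(\alpha(h))$ and then propagate northward from there, watching the content window. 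You flag this difficulty at the end of the proposal, which is appropriate, but it remains unresolved. In sum, the outline is close to correct and more thorough than the published proof in one respect, but the propagation arguments in Cases~2 and~3 and the missing $\O$ sub-cases need to be filled in before this would count as a complete proof.
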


\begin{proof} By construction $\omega(P(h,\pi))=\omega(h)$, $\ell(P(h,\pi))=\ell(h)$ and $P(h,\pi)$ satisfies \refq{shp}.
Hence if $h$ does not insert into $\pi$ then $P(h,\pi)$ does not satisfy \refq{shh} or $\pi+P(h,\pi)$ is not a reverse plane partition.

If $P(h,\pi)$ does not satisfy \refq{shh} then there exists $u\in P(h,\pi)$ with $u\in\I\cup\A$ and $\pi(u)<\pi(\e u)$.
Hence \refl{cand} yields the existence of a $v\in\cand(\pi)$ with $c(\alpha(P(h,\pi)))\leq c(u)<c(v)$.

On the other hand assume that $P(h,\pi)$ satisfies \refq{shh} and that $\pi+P(h,\pi)$ is not a reverse plane partition.
Then there exists $u\in P(h,\pi)$ such that
\begin{align}
\label{Equation:eu}
\pi(u)&=\pi(\e u)\text{ and }\e u\notin P(h,\pi)\text{ or }\\
\label{Equation:su}
\pi(u)&=\pi(\s u)\text{ and }\s u\notin P(h,\pi).
\end{align}
Let $u\in P(h,\pi)$ the minimal cell with respect to the content order that satisfies \refq{eu} or \refq{su}.

If $\e u\notin P(h,\pi)$ and $\pi(u)=\pi(\e u)$ then $\n u\in P(h,\pi)$ and thus $\n u\in\B\cup\I$ with $\pi(u)=\pi(\n u)$.
Consequently $\pi(\n u)=\pi(\e\n u)=\pi(\e u)$.
But now either $\e\n u\in P(h,\pi)$ satisfies \refq{su} or $\e\n u\notin P(h,\pi)$ and $\n u$ satisfies \refq{eu} contradicting the minimality of $u$.

If $\s u\notin P(h,\pi)$ and $\pi(u)=\pi(\s u)$ then $u\in\A\cup\O$ and $\e u,\e\s u\in\lambda$.
If $u\in\O$, $\e u\notin P(h,\pi)$ and $\pi(\e u)=\pi(u)$ then we are in the case treated above.
Thus we may assume $\pi(\e u)>\pi(u)$, but then $\e\s u\in\cand(\pi)$.
On the other hand if $\e u\in P(h,\pi)$ then we may assume $\pi(\e\s u)>\pi(\e u)$ by minimality of $u$ and again $\e\s u\in\cand(\pi)$.
\end{proof}

\section{Factors}\label{Section:factors}

In this section we address the question how and when rim-hook insertion can be inverted.

\smallskip
Given a reverse plane partition $\pi$ of shape $\lambda$ and a rim-hook of $\lambda$ it is natural to ask if there exists a reverse plane partition $\tilde{\pi}$ such that $\pi=h*\tilde{\pi}$.
% Inadvertently one could then make the erroneous definition $\tilde{\pi}=\pi/h$.
The main reason why extracting rim-hooks is a non-trivial task is the fact that the reverse plane partition $\tilde{\pi}$ is in general not unique.
See \reff{extract}.
Candidates play in important role in resolving this ambiguity.

\myfig{}
{extract}
{
\begin{tikzpicture}[tdplot_main_coords]
\begin{scope}[xshift=0cm,yshift=-2cm]
\draw[line width=2pt,->](.5,.5,1.5)--(.5,.5,0);
\end{scope}
\begin{scope}[xshift=0cm,yshift=0cm]
\clip(2,0,0)--(2,1,0)--(0,1,0)--(0,1,1)--(0,0,1)--(2,0,1)--cycle;
\draw[line width=1pt,fill=cyan]
(0,0,1)--(1,0,1)--(1,1,1)--(0,1,1)--cycle
(1,0,1)--(2,0,1)--(2,1,1)--(1,1,1)--cycle
;
\draw[line width=1pt,fill=black!20]
(2,0,1)--(2,0,0)--(2,1,0)--(2,1,1)--cycle
;
\draw[line width=1pt,fill=black!50]
(0,1,1)--(1,1,1)--(1,1,0)--(0,1,0)--cycle
(1,1,1)--(2,1,1)--(2,1,0)--(1,1,0)--cycle
;
\draw[line width=2pt](2,0,0)--(2,1,0)--(0,1,0)--(0,1,1)--(0,0,1)--(2,0,1)--cycle;
\end{scope}
\begin{scope}[xshift=0cm,yshift=-44mm]
\clip(2,0,0)--(2,2,0)--(0,2,0)--(0,2,1)--(0,1,1)--(0,1,1)--(0,0,1)--(2,0,1)--cycle;
\draw[line width=1pt,fill=black!5]
(0,0,1)--(1,0,1)--(1,1,1)--(0,1,1)--cycle
(0,1,1)--(1,1,1)--(1,2,1)--(0,2,1)--cycle
(1,0,1)--(2,0,1)--(2,1,1)--(1,1,1)--cycle
(1,1,1)--(2,1,1)--(2,2,1)--(1,2,1)--cycle
;
\draw[line width=1pt,fill=black!20]
(2,0,1)--(2,0,0)--(2,1,0)--(2,1,1)--cycle
(2,1,1)--(2,1,0)--(2,2,0)--(2,2,1)--cycle
;
\draw[line width=1pt,fill=black!50]
(0,2,1)--(1,2,1)--(1,2,0)--(0,2,0)--cycle
(1,2,1)--(2,2,1)--(2,2,0)--(1,2,0)--cycle
;
\draw[line width=2pt](2,0,0)--(2,2,0)--(0,2,0)--(0,2,1)--(0,1,1)--(0,1,1)--(0,0,1)--(2,0,1)--cycle;
\end{scope}
\begin{scope}[xshift=4cm,yshift=-44mm]
\clip(2,0,0)--(2,2,0)--(0,2,0)--(0,2,1)--(0,1,1)--(0,1,2)--(0,0,2)--(2,0,2)--cycle;
\draw[line width=1pt,fill=black!5]
(0,1,1)--(1,1,1)--(1,2,1)--(0,2,1)--cycle
(1,1,1)--(2,1,1)--(2,2,1)--(1,2,1)--cycle
;
\draw[line width=1pt,fill=cyan]
(0,0,2)--(1,0,2)--(1,1,2)--(0,1,2)--cycle
(1,0,2)--(2,0,2)--(2,1,2)--(1,1,2)--cycle
;
\draw[line width=1pt,fill=black!20]
(2,0,2)--(2,0,1)--(2,1,1)--(2,1,2)--cycle
(2,0,1)--(2,0,0)--(2,1,0)--(2,1,1)--cycle
(2,1,1)--(2,1,0)--(2,2,0)--(2,2,1)--cycle
;
\draw[line width=1pt,fill=black!50]
(0,1,2)--(1,1,2)--(1,1,1)--(0,1,1)--cycle
(0,2,1)--(1,2,1)--(1,2,0)--(0,2,0)--cycle
(1,1,2)--(2,1,2)--(2,1,1)--(1,1,1)--cycle
(1,2,1)--(2,2,1)--(2,2,0)--(1,2,0)--cycle
;
\draw[line width=2pt](2,0,0)--(2,2,0)--(0,2,0)--(0,2,1)--(0,1,1)--(0,1,2)--(0,0,2)--(2,0,2)--cycle;
\end{scope}
\begin{scope}[xshift=15cm]
\begin{scope}[xshift=0cm,yshift=-2cm]
\draw[line width=2pt,->](.5,.5,1.5)--(.5,.5,0);
\end{scope}
\begin{scope}[xshift=0cm,yshift=0cm]
\clip(2,0,0)--(2,1,0)--(0,1,0)--(0,1,1)--(0,0,1)--(2,0,1)--cycle;
\draw[line width=1pt,fill=red]
(0,0,1)--(1,0,1)--(1,1,1)--(0,1,1)--cycle
;
\draw[line width=1pt,fill=yellow]
(1,0,1)--(2,0,1)--(2,1,1)--(1,1,1)--cycle
;
\draw[line width=1pt,fill=black!20]
(2,0,1)--(2,0,0)--(2,1,0)--(2,1,1)--cycle
;
\draw[line width=1pt,fill=black!50]
(0,1,1)--(1,1,1)--(1,1,0)--(0,1,0)--cycle
(1,1,1)--(2,1,1)--(2,1,0)--(1,1,0)--cycle
;
\draw[line width=2pt](2,0,0)--(2,1,0)--(0,1,0)--(0,1,1)--(0,0,1)--(2,0,1)--cycle;
\end{scope}
\begin{scope}[xshift=0cm,yshift=-44mm]
\clip(2,0,0)--(2,2,0)--(0,2,0)--(0,2,1)--(0,1,1)--(0,1,2)--(0,0,2)--(1,0,2)--(1,0,1)--(2,0,1)--cycle;
\draw[line width=1pt,fill=black!5]
(0,0,2)--(1,0,2)--(1,1,2)--(0,1,2)--cycle
(0,1,1)--(1,1,1)--(1,2,1)--(0,2,1)--cycle
(1,0,1)--(2,0,1)--(2,1,1)--(1,1,1)--cycle
(1,1,0)--(2,1,0)--(2,2,0)--(1,2,0)--cycle
;
\draw[line width=1pt,fill=black!20]
(1,0,2)--(1,0,1)--(1,1,1)--(1,1,2)--cycle
(1,1,1)--(1,1,0)--(1,2,0)--(1,2,1)--cycle
(2,0,1)--(2,0,0)--(2,1,0)--(2,1,1)--cycle
;
\draw[line width=1pt,fill=black!50]
(0,1,2)--(1,1,2)--(1,1,1)--(0,1,1)--cycle
(0,2,1)--(1,2,1)--(1,2,0)--(0,2,0)--cycle
(1,1,1)--(2,1,1)--(2,1,0)--(1,1,0)--cycle
;
\draw[line width=2pt](2,0,0)--(2,2,0)--(0,2,0)--(0,2,1)--(0,1,1)--(0,1,2)--(0,0,2)--(1,0,2)--(1,0,1)--(2,0,1)--cycle;
\end{scope}
\begin{scope}[xshift=4cm,yshift=-44mm]
\clip(2,0,0)--(2,2,0)--(0,2,0)--(0,2,1)--(0,1,1)--(0,1,2)--(0,0,2)--(2,0,2)--cycle;
\draw[line width=1pt,fill=black!5]
(0,0,2)--(1,0,2)--(1,1,2)--(0,1,2)--cycle
(0,1,1)--(1,1,1)--(1,2,1)--(0,2,1)--cycle
;
\draw[line width=1pt,fill=red]
(1,1,1)--(2,1,1)--(2,2,1)--(1,2,1)--cycle
;
\draw[line width=1pt,fill=yellow]
(1,0,2)--(2,0,2)--(2,1,2)--(1,1,2)--cycle
;
\draw[line width=1pt,fill=black!20]
(2,0,2)--(2,0,1)--(2,1,1)--(2,1,2)--cycle
(2,0,1)--(2,0,0)--(2,1,0)--(2,1,1)--cycle
(2,1,1)--(2,1,0)--(2,2,0)--(2,2,1)--cycle
;
\draw[line width=1pt,fill=black!50]
(0,1,2)--(1,1,2)--(1,1,1)--(0,1,1)--cycle
(0,2,1)--(1,2,1)--(1,2,0)--(0,2,0)--cycle
(1,1,2)--(2,1,2)--(2,1,1)--(1,1,1)--cycle
(1,2,1)--(2,2,1)--(2,2,0)--(1,2,0)--cycle
;
\draw[line width=2pt](2,0,0)--(2,2,0)--(0,2,0)--(0,2,1)--(0,1,1)--(0,1,2)--(0,0,2)--(2,0,2)--cycle;
\end{scope}
\end{scope}
\end{tikzpicture}
}

\smallskip
Let us formulate in terms of reducing $\pi$ along a path in $\lambda$.
A rim-hook $h$ of $\lambda$ is a \emph{factor} of $\pi$ if there exists a south-west-path $P$ such that $\omega(P)=\omega(h)$, $\ell(P)=\ell(h)$, $(P,\pi)$ is compatible and $\pi-P$ is again a reverse plane partition.
Denote the set of all factors of $\pi$ by $\F(\pi)$.

Given a candidate $v\in\cand(\pi)$ construct the north-east-path $Q(v,\pi)$ in $\lambda$ as follows: Let $\alpha(P)=v$ and, if $u$ is the current cell, move to
\begin{align*}%\label{eq:Qw}
\text{the cell}\quad
\begin{cases}
\n u&\quad\text{if }u\in\O\cup\B\text{ and }\pi(u)=\pi(\n u),\\
\e u&\quad\text{if }u\in\I\cup\A\text{ or }\e u\in\lambda,\pi(u)>\pi(\n u),
\end{cases}
\end{align*}
and terminate the path if neither a north step nor an east step are possible according to these rules, that is, if $\pi(u)>\pi(\n u)$ and $\e u\notin\lambda$.
Denote by $h(v,\pi)$ the rim-hook of $\lambda$ defined by $\omega(h(v,\pi))=\omega(Q(v,\pi))$ and $\ell(h(v,\pi))=\ell(Q(v,\pi))$.

\begin{lem}{factorpath} Let $\lambda$ be a partition, $\pi\in\rpp_{\lambda}$, $h\in\F(\pi)$ be a factor of $\pi$, $P$ be a south-west-path in $\lambda$ such that $\omega(P)=\omega(h)$, $\ell(P)=\ell(h)$, $(P,\pi)$ is compatible and $\pi-P$ is a reverse plane partition, and let $v=\alpha(P)$.
Then $v\in\cand(\pi)$ and $P$ is the reverse path of $Q(v,\pi)$.
\end{lem}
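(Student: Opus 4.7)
I plan to establish both claims by walking along $P$ from its head $v=\alpha(P)$ back towards its tail $\omega(h)$ and verifying at each step that the move of $P$ agrees with the construction of $Q(v,\pi)$. A preliminary observation is the content identity $c(\alpha(P))=c(\alpha(h))$, which is immediate from $\omega(P)=\omega(h)$ together with the length formula $\ell(P)=(\alpha(P)_1-\omega(P)_1)+(\omega(P)_2-\alpha(P)_2)$. Since the regions $\A,\B,\I,\O$ partition $\lambda$ according to content and $\alpha(h)\in\A\cup\O$ for every rim-hook, this content identity forces $v\in\A\cup\O$.

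The central structural observation about any SW-path is that the quantity row minus column strictly increases at every step, so rows are weakly increasing and columns weakly decreasing along $P$. In particular no cell strictly east or strictly north of some $u\in P$ can appear later than $u$ in $P$. I would use this rigidity to deduce two local facts:
\begin{enumerate}[(i)]
\item $\w v\notin P$, and if the predecessor of $v$ in $P$ is $\e v$ then also $\n v\notin P$;
\item when $v\in\I\cup\A$, compatibility condition \refq{shh} forces $\e v\in P$, and the only admissible successor of $\e v$ in $P$ is $v$ itself (the alternative $\e\s v$ would trap $P$ strictly south of $v$'s row, preventing it from reaching $v$); hence the predecessor of $v$ must be $\e v$ in this case.
\end{enumerate}
Combining (i) and (ii) with the assumption that $\pi-P$ is a reverse plane partition gives $\pi(v)>\pi(\w v)$ unconditionally and, whenever $v\in\A$, also $\pi(v)>\pi(\n v)$. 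Together with $v\in\A\cup\O$ this establishes $v\in\cand(\pi)$.

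For the equality $P=Q(v,\pi)'$ I would proceed by induction along the reverse of $P$. Writing $w_k=v_{s-k}$, the predecessor $w_{k+1}$ of $w_k$ in $P$ is either $\n w_k$ or $\e w_k$, and for each $k<s$ I check that the construction of $Q$ at $w_k$ prescribes exactly this cell. If $w_{k+1}=\n w_k$, then \refq{shp} yields $\pi(w_k)=\pi(\n w_k)$, while applying the argument of (ii) with $w_k$ in place of $v$ rules out $w_k\in\I\cup\A$; hence $w_k\in\O\cup\B$ and the first clause of the $Q$-construction prescribes the N-step. If $w_{k+1}=\e w_k$, then either $w_k\in\I\cup\A$ (and the second clause immediately produces the E-step), or $w_k\in\O\cup\B$; in the latter sub-case the argument of (i) shows $\n w_k\notin P$, so the RPP condition on $\pi-P$ yields $\pi(w_k)>\pi(\n w_k)$, and again the second clause produces the E-step. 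Finally, at $w_s=\omega(h)\in\O\cup\B$ we have $\e w_s\notin\lambda$, and (i) applied once more gives $\n w_s\notin P$, hence $\pi(w_s)>\pi(\n w_s)$; the $Q$-construction therefore terminates precisely at $w_s$, which matches $\omega(Q(v,\pi))=\omega(h)$.

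The main technical obstacle is the repeated use of row/column monotonicity of SW-paths in (i) and (ii) to exclude alternative routings; once that rigidity is made precise the rest reduces to a direct case analysis of the two non-terminal clauses in the definition of $Q(v,\pi)$.
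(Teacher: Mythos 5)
Your proposal is correct and follows essentially the same approach as the paper's proof: the content constraint on $\alpha(P)$ places $v$ in $\A\cup\O$, candidacy follows from \refq{shh} together with the reverse-plane-partition inequalities on $\pi-P$, and the identification of $P$ with the reverse of $Q(v,\pi)$ is verified cell-by-cell using \refq{shh}, \refq{shp} and the RPP condition. Your more explicit appeal to the fact that a south-west-path of length $\ell(h)$ ending at $\omega(h)$ visits each content level exactly once (which underlies your local facts (i) and (ii)) is a cleaner way of phrasing what the paper leaves implicit, but it does not change the substance of the argument.
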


\begin{proof} First note that $v\in\A\cup\O$ because $\omega(P)=\omega(h)$, $\ell(P)=\ell(h)$ and $\alpha(h)\in\A\cup\O$.
Hence $v\in\cand(\pi)$ because of \refq{shh} and since $\pi-P$ is a reverse plane partition.
Now consider a cell $u\in P$.
If $\pi(u)=\pi(\n u)$ then $\n u\in P'$ as $\pi-P$ is a reverse plane partition.
If $u\in\I\cup\A$ or if $\e u\in\lambda$ and $\pi(u)>\pi(\n u)$ then $\e u\in P$ because $(P,\pi)$ is compatible.
Thus $P$ agrees with the construction of $Q(v,\pi)$.
\end{proof}

\begin{lem}{factoriff} Let $\lambda$ be a partition, $\pi\in\rpp_{\lambda}$ and $h$ a rim-hook of $\lambda$.
Then $h$ is a factor of $\pi$ if and only if there exists $\tilde\pi\in\rpp_{\lambda}$ such that $h$ inserts into $\tilde\pi$ and $h*\tilde{\pi}=\pi$.
\end{lem}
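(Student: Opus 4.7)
The plan is to verify both directions of the equivalence using the same path $P$, exploiting the fact that the compatibility conditions \refq{shh} and \refq{shp} and the reverse-plane-partition property are preserved under adding or subtracting $1$ along $P$, since the conditions involve only equalities $\pi(u)=\pi(\e u)$ or $\pi(u)=\pi(\s u)$.

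For the direction $(\Leftarrow)$, I would assume $h$ inserts into some $\tilde\pi$ with $h*\tilde\pi=\pi$, take $P=P(h,\tilde\pi)$, and set out to show $h\in\F(\pi)$ using the same path $P$. Since $\pi-P=\tilde\pi$ is already a reverse plane partition, it only remains to verify $(P,\pi)$ is compatible. For \refq{shh}: if $u\in P$ with $u\in\I\cup\A$, then by $(P,\tilde\pi)$-compatibility $\e u\in P$ and $\tilde\pi(u)=\tilde\pi(\e u)$, hence $\pi(u)=\tilde\pi(u)+1=\tilde\pi(\e u)+1=\pi(\e u)$. The argument for \refq{shp} is identical. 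So $h$ is a factor of $\pi$.

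For the direction $(\Rightarrow)$, I would assume $h\in\F(\pi)$ via some path $P$, define $\tilde\pi\defeq\pi-P$, and show $P$ witnesses that $h$ inserts into $\tilde\pi$ and $h*\tilde\pi=\pi$. Again $\tilde\pi+P=\pi$ is a reverse plane partition by hypothesis, and the compatibility $(P,\tilde\pi)$ follows by the same $\pm1$ argument as above: if $u\in P\cap(\I\cup\A)$ then $(P,\pi)$-compatibility gives $\e u\in P$ and $\pi(u)=\pi(\e u)$, so $\tilde\pi(u)=\tilde\pi(\e u)$; similarly for \refq{shp}. At this point $h$ inserts into $\tilde\pi$, so $P(h,\tilde\pi)$ is well defined, and by the uniqueness statement \refl{insertpath} the two south-west paths coincide, giving $h*\tilde\pi=\tilde\pi+P(h,\tilde\pi)=\tilde\pi+P=\pi$.

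There is no real obstacle here; the only subtle point is to remember to invoke \refl{insertpath} in the forward direction so that the $\tilde\pi$ produced by subtracting along the factoring path is genuinely the preimage of $\pi$ under the canonical insertion operation $h*(\cdot)$, rather than merely \emph{some} reverse plane partition into which $h$ could be inserted to produce $\pi$ along a different path.
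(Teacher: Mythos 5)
Your proof is correct and in essence agrees with the paper's: the heart of both arguments is the observation that the compatibility conditions \refq{shh} and \refq{shp} compare only cells both lying on the path, so $(P,\pi)$ is compatible if and only if $(P,\pi\pm P)$ is. The paper phrases the argument via candidates and \refl{factorpath}, which identifies the witnessing path canonically as $Q(v,\pi)$ for $v=\alpha(P)$ and also records the identity $h=h(v,\pi)$ for later reuse; your version works directly with an arbitrary witnessing path and is somewhat more self-contained, and your closing appeal to \refl{insertpath} to identify $P$ with $P(h,\tilde\pi)$ is exactly the step the paper leaves implicit.
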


\begin{proof} Suppose there is a reverse plane partition $\tilde{\pi}$ such that $h$ inserts into $\tilde{\pi}$ and $h*\tilde{\pi}=\pi$.
Then $v=\alpha(P(h,\tilde{\pi}))\in\cand(\pi)$ as in the proof of \refl{factorpath} above.
Moreover $P(h,\tilde{\pi})=Q(v,\pi)$ and $h=h(v,\pi)$.

Conversely suppose that $h\in\F(\pi)$.
By \refl{factorpath} there exists $v\in\cand(\pi)$ such that $h=h(v,\pi)$, $(Q(v,\pi),\pi)$ is compatible and $\pi-Q(v,\pi)$ is a reverse plane partition.
We may set $\tilde{\pi}=\pi-Q(v,\pi)$.
\end{proof}

Note that by Lemmas~\ref{Lemma:factorpath} and~\ref{Lemma:factoriff} the reverse plane partition $\tilde{\pi}$ is unique as long as we fix the candidate $v=\alpha(P(h,\tilde{\pi}))=\alpha(Q(v,\pi))$.
Moreover, we have reduced the task of finding factors of $\pi$ to finding suitable candidates.

\smallskip
The following theorem guarantees the existence of factors.

\begin{thm}{existsfactor} Let $\lambda$ be a partition, $\pi\in\rpp_{\lambda}$ and $u\in\cand(\pi)$ a candidate.
Then there exists a factor $h\in\F(\pi)$ such that $h\leq h(u,\pi)$.
\end{thm}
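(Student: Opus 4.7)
The plan is to prove the theorem by strong induction on the rim-hook $h(u,\pi)$ with respect to the lex order on rim-hooks of $\lambda$. If $h(u,\pi)\in\F(\pi)$, set $h=h(u,\pi)$ and finish. Otherwise, I will exhibit a candidate $u'\in\cand(\pi)$ with $h(u',\pi)<h(u,\pi)$; the inductive hypothesis then yields a factor $h\leq h(u',\pi)<h(u,\pi)$. The descent terminates because the lex order on rim-hooks is finite. By \refl{factorpath} and \refl{factoriff}, and unpacking the construction of $Q=Q(u,\pi)$, there are exactly two ways $h(u,\pi)$ can fail to be a factor: condition~\refq{shp} holds by construction and $\pi-Q$ is non-negative on $Q$ (since $\pi$-values weakly increase along $Q$ from $\pi(u)\geq 1$), so either~\refq{shh} fails for $(Q,\pi)$ or $\pi-Q$ violates weak monotonicity in some row or column.

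In the first case, there is $w\in Q\cap(\I\cup\A)$ with $\pi(w)<\pi(\e w)$; the construction of $Q$ forces $\e w\in Q$, and the content arithmetic $c(\e w)=c(w)+1$ places $\e w\in\A\cup\O$. Thus \refl{cand} applies to the pair $(w,\e w)$ and produces a candidate $u'$ with $c(u')>c(w)\geq c(u)$, giving $h(u',\pi)<h(u,\pi)$.

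In the second case, a monotonicity failure identifies a cell $v\notin Q$ with either $\e v\in Q,\pi(v)=\pi(\e v)$ or $\s v\in Q,\pi(v)=\pi(\s v)$. I would trace the path rules carefully -- a north step requires $\pi(\text{current})=\pi(\n\text{current})$, an east step from an $\I\cup\A$ cell forces compatibility~\refq{shh}, and termination requires $\pi(\text{current})>\pi(\n\text{current})$ so the path cannot end at a cell matching its north neighbor -- to deduce the fourfold equality $\pi(v)=\pi(\e v)=\pi(\s v)=\pi(\s\e v)$ together with the membership $\s\e v\in Q$. Since $\s\e v$ is reached from $u$ by NE-steps, $c(v)=c(\s\e v)\geq c(u)$; equality would force $u=\s\e v$ and hence $\pi(u)=\pi(\w u)$, contradicting $u\in\cand(\pi)$. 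Thus $c(v)>c(u)$. To produce $u'$, I would apply \refl{cand} in the row of $v$ to the leftmost cell $(i,j')$ in that row with $\pi$-value $\pi(v)$, paired with its western neighbor $(i,j'-1)$ -- invoking the remark after \refl{cand} when $j'=1$ -- obtaining a candidate $u'$ with $c(u')>c(u)$.

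The main obstacle will be the monotonicity-failure case: first, establishing the fourfold equality and the membership $\s\e v\in Q$ from only a local mismatch, and second, guaranteeing that the candidate produced by \refl{cand} in the row of $v$ has content strictly greater than $c(u)$ (and not merely greater than the content of the auxiliary cell to which the lemma is applied). This requires careful bookkeeping of the content classes $\I,\A,\O,\B$ of cells near $v$ and of the relative columns occupied by $u$ and $v$; in degenerate configurations the search for a candidate with the required content may need to be iterated along row $i$ or to descend via the column of $v$.
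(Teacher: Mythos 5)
Your framework is sound in principle: induct on the lex order of $h(u,\pi)$, and whenever $Q(u,\pi)$ fails as a witness path, exhibit a candidate $u'$ with $c(u')>c(u)$ so that $h(u',\pi)<h(u,\pi)$ and the inductive hypothesis applies. This is a genuinely different structure from the paper's argument, which instead reduces without loss of generality to $u=\min\cand(\pi)$ and proves directly that $h(u,\pi)\in\F(\pi)$. Your Case~1 (failure of~\refq{shh}) is correct, and your fourfold equality $\pi(v)=\pi(\e v)=\pi(\s v)=\pi(\s\e v)$ together with the bound $c(v)>c(u)$ in Case~2 also check out.

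But Case~2 has a genuine gap, and the fix you sketch does not work. You propose applying \refl{cand} to the pair $(i,j'-1),(i,j')$, where $(i,j')$ is the leftmost cell in the row of $v$ with $\pi$-value $\pi(v)$. Nothing prevents $(i,j'-1)$ from lying in $\O\cup\B$, in which case $(i,j')$ lies in $\B\cup\I$ and the hypotheses of \refl{cand} simply fail; and even when the lemma does apply, it only yields a candidate of content exceeding $c(i,j'-1)$, which can be smaller than $c(u)$ because $j'$ may lie far to the west of the column of $v$. Iterating westward along the row, or descending a column, only pushes the reference content further down, so the "iteration" you mention is not obviously convergent to a candidate with the needed bound. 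What closes the gap is to work with a violating cell $y$ that lies \emph{on the path $Q$} and is chosen $\tleq$-maximally (i.e.\ of smallest content) subject to $\n y\notin Q,\,\pi(\n y)=\pi(y)$ or $\w y\notin Q,\,\pi(\w y)=\pi(y)$. In the first alternative $y\in\I\cup\A$ by construction of $Q$, so $\n\w y\in\I\cup\A$ and $\n y\in\A\cup\O$ lie in the same row with $c(\n\w y)=c(y)\geq c(u)$; if $\pi(\n\w y)<\pi(\n y)$, \refl{cand} applied to this pair gives a candidate of content $>c(y)\geq c(u)$, while if $\pi(\n\w y)=\pi(\n y)$, a short chase using the determinism of $Q$ and the extremality of $y$ produces a contradiction, as does the second alternative outright. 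This extremal, on-path choice of $y$ — rather than local bookkeeping around the off-path cell $v$ — is the ingredient your sketch is missing.
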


\begin{proof} Without loss of generality we may assume that $u$ is the minimum of $\cand(\pi)$ with respect to the content order $\tleq$.
Let $P$ be the reverse of $Q(u,\pi)$ and $h=h(u,\pi)$.
Then by \refl{factorpath} $P$ is a south-west-path with $\omega(P)=\omega(h)$ and $\ell(P)=\ell(h)$ satisfying \refq{shp}.
Let $v\in\I\cup\A$ be a cell with $v\tleq u$.
By \refl{cand} and by choice of $u$ we have $\pi(v)=\pi(\e v)$.
Thus $P$ satisfies \refq{shh}.

Now suppose that $\pi-P$ is not a reverse plane partition and choose $v\in P$ maximal with respect to the content order such that $\n v\notin P$ and $\pi(\n v)=\pi(v)$ or $\w v\notin P$ and $\pi(\w v)=\pi(v)$.
If $\n v\notin P$ and $\pi(\n v)=\pi(v)$ then $v\in\I\cup\A$.
By \refl{cand} we have $\pi(\n\w v)=\pi(v)$.
Hence $\pi(\w v)=\pi(v)$ and $v\notin\cand(\pi)$.
Since $v\notin\cand(\pi)$ either $\s v\in P$ or $\w v\in P$.
But $\w v\in P$ contradicts the maximality of $v$.

We may therefore assume that $\w v\notin P$ and $\pi(\w v)=\pi(v)$ and $\s v\in P$. Then $\pi(\s v)=\pi(v)$ by construction of $P$.
Consequently $\pi(\s\w v)=\pi(v)$ and $\s v\notin\cand(\pi)$.
By maximality of $v$ we must have $\s\w v\in P$.
But then $\pi(\s\w v)=\pi(\w v)$ yields a contradiction to the maximality of $v$.

We conclude that $\pi-P$ is a reverse plane partition and $h\in\F(\pi)$.
\end{proof}

In particular the proof of \reft{existsfactor} shows that if $u$ is the minimum of $\cand(\pi)$ with respect to the content order then $h(u,\pi)$ is a factor of $\pi$.
We conclude a first factorisation theorem.

\begin{thm}{factorisation} Let $\lambda$ be a partition and $\pi\in\rpp_{\lambda}$.
Then there exists a sequence of rim-hooks $h_1,h_2,\dots,h_s$ of $\lambda$ such that $h_i$ inserts into the reverse plane partition $h_{i-1}*\dots*h_1*0$ for all $i\in[s]$ and $\pi=h_s*\dots*h_1*0$, where $0$ denotes the zero reverse plane partition.
\end{thm}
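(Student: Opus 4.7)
The plan is to proceed by induction on the size $\abs{\pi}$. In the base case $\abs{\pi}=0$, the zero reverse plane partition is obtained from itself by the empty sequence of insertions, so $s=0$ works.

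For the inductive step, suppose $\pi\neq 0$. By \refc{existcand} the set $\cand(\pi)$ is non-empty. Let $u$ be the minimum element of $\cand(\pi)$ with respect to the content order $\tleq$. The argument given in the proof of \reft{existsfactor} shows that for this particular choice of $u$ the rim-hook $h\defeq h(u,\pi)$ is a factor of $\pi$. By \refl{factoriff} there exists a reverse plane partition $\tilde{\pi}\in\rpp_{\lambda}$ such that $h$ inserts into $\tilde{\pi}$ and $h*\tilde{\pi}=\pi$; concretely one can take $\tilde{\pi}=\pi-Q(u,\pi)$.

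Since the path $Q(u,\pi)$ contains at least the cell $u$, the reverse plane partition $\tilde{\pi}$ satisfies $\abs{\tilde{\pi}}<\abs{\pi}$, so the induction hypothesis applies and yields rim-hooks $h_1,\dots,h_{s-1}$ with $h_i$ inserting into $h_{i-1}*\cdots*h_1*0$ for every $i\in[s-1]$ and
\begin{eq*}
\tilde{\pi}=h_{s-1}*\cdots*h_1*0.
\end{eq*}
Setting $h_s\defeq h$ produces the required sequence: we have $h_s$ inserting into $h_{s-1}*\cdots*h_1*0=\tilde{\pi}$ by construction, and
\begin{eq*}
h_s*h_{s-1}*\cdots*h_1*0=h*\tilde{\pi}=\pi.
\end{eq*}

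The entire argument is a straightforward assembly of the previously established results, so there is no real obstacle. The only point that requires a little care is to ensure that the induction terminates, which is immediate since $Q(u,\pi)$ is non-empty and hence $\abs{\tilde{\pi}}=\abs{\pi}-\ell(h(u,\pi))-1<\abs{\pi}$. Note that this theorem does not yet claim uniqueness of the factorisation nor that the sequence can be taken in lexicographic order; both refinements are the content of \reft{main} and will require the additional analysis carried out in \refs{bijection}.
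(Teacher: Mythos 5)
Your proof is correct and follows essentially the same approach as the paper's: induction on $\abs{\pi}$, using \refc{existcand} to produce a candidate and \reft{existsfactor} (plus \refl{factoriff}) to extract a factor. The paper states this in one sentence; you have simply spelled out the same argument in full, including the size-decrease bookkeeping $\abs{\tilde{\pi}}=\abs{\pi}-\ell(h(u,\pi))-1$, which is accurate.
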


\begin{proof} The claim follows by induction on the size of $\pi$ since every non-zero reverse plane partition has a candidate by \refc{existcand} and thus also a factor due to \reft{existsfactor}.
\end{proof}

To conclude this section we prove two results on the behaviour of the set of candidates when a rim-hook is inserted or extracted.

\begin{lem}{candidateQ} Let $\lambda$ be a partition, $\pi\in\rpp_{\lambda}$ and $u,v\in\cand(\pi)$ be distinct candidates such that $(Q(v,\pi),\pi)$ is compatible and $\pi-Q(v,\pi)$ is a reverse plane partition.
Then $u\in\cand(\pi-Q(v,\pi))$.
\end{lem}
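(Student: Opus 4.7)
The plan is to reduce the lemma to the single claim $u\notin Q(v,\pi)$. Once this is established, the conclusion is immediate: since $\pi-Q(v,\pi)$ is obtained from $\pi$ by subtracting $1$ precisely on the cells of $Q(v,\pi)$, we have $(\pi-Q(v,\pi))(u)=\pi(u)$ whereas $(\pi-Q(v,\pi))(\w u)\leq\pi(\w u)$ and $(\pi-Q(v,\pi))(\n u)\leq\pi(\n u)$, so the strict inequalities $\pi(u)>\pi(\w u)$ (and, when $u\in\A$, also $\pi(u)>\pi(\n u)$) that witness $u\in\cand(\pi)$ automatically transfer to $\pi-Q(v,\pi)$, giving $u\in\cand(\pi-Q(v,\pi))$.

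To prove $u\notin Q(v,\pi)$ I argue by contradiction: assume $u\in Q(v,\pi)$. Since $u\neq v=\alpha(Q(v,\pi))$ and $Q(v,\pi)$ is a north-east path, the predecessor of $u$ in $Q(v,\pi)$ is either $\s u$ (reached by a north step) or $\w u$ (reached by an east step). The crucial observation is that whenever $w\in\O\cup\A$, every cell $w'\in\lambda$ with $c(w')=c(w)-1$ belongs to $\I\cup\A$; in particular $\s u,\w u\in\I\cup\A$ whenever these cells lie in $\lambda$, since $c(\s u)=c(\w u)=c(u)-1$ and $u\in\O\cup\A$ (the latter because $u$ is a candidate). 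The observation follows directly from the interlacing $o_1<i_1<\dots<i_r<o_{r+1}$: if $c(w)=o_k$ then $c(w)-1\in[i_{k-1},o_k)$ (with the convention $i_0=-\infty$ when $k=1$), and if $i_k<c(w)<o_{k+1}$ then $c(w)-1\in[i_k,o_{k+1})$, and in both ranges every integer content is either some $i_j$ or lies strictly between $i_j$ and $o_{j+1}$, which is precisely the description of $\I\cup\A$.

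The two subcases then dispatch quickly. If the predecessor of $u$ is $\s u$, the north-step rule in the construction of $Q(v,\pi)$ requires $\s u\in\O\cup\B$, directly contradicting the observation. If the predecessor is $\w u$, the east-step rule allows either $\w u\in\I\cup\A$, or ($\w u\in\O\cup\B$ with $\pi(\w u)>\pi(\n\w u)$), and the second alternative is again excluded by the observation. Hence $\w u\in\I\cup\A$, and the compatibility of $(Q(v,\pi),\pi)$ applied to $\w u\in Q(v,\pi)\cap(\I\cup\A)$ forces $\pi(\w u)=\pi(\e\w u)=\pi(u)$, contradicting the strict inequality $\pi(u)>\pi(\w u)$ that $u$ enjoys as a candidate in $\O\cup\A$. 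Both subcases are impossible, so $u\notin Q(v,\pi)$, completing the proof. The only nontrivial ingredient is the content-bookkeeping observation; the rest of the argument is mechanical from the definitions of $Q(v,\pi)$, compatibility, and $\cand(\pi)$, so this bookkeeping is where I expect the bulk of the care to be needed.
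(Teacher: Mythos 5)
Your proposal is correct and follows essentially the same route as the paper's proof: both reduce to showing $u\in Q(v,\pi)$ is impossible, both rule out $\s u$ as the predecessor and then apply compatibility \refq{shh} at $\w u$ to contradict $\pi(u)>\pi(\w u)$. The only difference is that the paper compresses the content-bookkeeping (your ``crucial observation'' that lowering content by one sends $\O\cup\A$ into $\I\cup\A$) into the phrase ``by construction of $Q(v,\pi)$,'' whereas you spell it out explicitly, which is a reasonable thing to do since that fact is exactly what justifies both ruling out the north step into $u$ and the applicability of \refq{shh} at $\w u$.
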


\begin{proof} The claim is trivially true if $v\triangleleft u$.
Thus suppose $u\triangleleft v$ and $u\notin\cand(\pi-Q(v,\pi))$.
It follows that $u\in Q(v,\pi)$.
By construction of $Q(v,\pi)$ and since $u\triangleleft\alpha(Q(v,\pi))$ we have $\w u\in Q(v,\pi)$.
But then $\pi(\w u)=\pi(u)$ by \refq{shh} contradicting $u\in\cand(\pi)$.
\end{proof}

\begin{lem}{candidateP} Let $\lambda$ be a partition, $\pi\in\rpp_{\lambda}$, $u\in\lambda-\cand(\pi)$ and $v\in\cand(\pi)$ such that $(Q(v,\pi),\pi)$ is compatible and $\pi-Q(v,\pi)$ is a reverse plane partition.
If $u\triangleleft v$ then $u\notin\cand(\pi-Q(v,\pi))$.
\end{lem}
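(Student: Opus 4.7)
The plan is to split on the region of $u$ and on which inequality, $\pi'(u) \le \pi'(\w u)$ or $\pi'(u) \le \pi'(\n u)$, suffices to certify $u \notin \cand(\pi')$, where $\pi' = \pi - Q(v,\pi)$. If $u \in \I \cup \B$ there is nothing to prove, since $\cand(\pi')$ is contained in $\O \cup \A$ by definition. Otherwise $u \in \O \cup \A$, and the assumption $u \notin \cand(\pi)$ together with the reverse plane partition inequalities forces $\pi(u) = \pi(\w u)$ when $u \in \O$, and forces at least one of $\pi(u) = \pi(\w u)$, $\pi(u) = \pi(\n u)$ when $u \in \A$.

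First I would dispatch in one stroke every case in which $\pi(u) = \pi(\w u)$, by aiming at $\pi'(u) \le \pi'(\w u)$. The only scenario that could make this inequality fail is $\w u \in Q(v,\pi)$ with $u \notin Q(v,\pi)$. But $\w u$ has content $c(u) - 1$, and since $u \in \O \cup \A$ and the regions are determined by content, this always places $\w u \in \I \cup \A$ whenever $\w u \in \lambda$. Hence the compatibility condition \refq{shh} applied to $\w u \in Q(v,\pi)$ forces $u = \e \w u \in Q(v,\pi)$, a contradiction. If instead $\w u \notin \lambda$ the scenario is vacuous. Thus $\pi'(u) \le \pi'(\w u)$, so $u \notin \cand(\pi')$.

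The main obstacle is the last sub-case: $u \in \A$, $\pi(u) > \pi(\w u)$, and $\pi(u) = \pi(\n u)$. Here the $\w u$ argument is unavailable and the hypothesis $u \triangleleft v$ is finally needed. I would assume $\n u \in Q(v,\pi)$ and $u \notin Q(v,\pi)$ and derive a contradiction. If $\n u = \alpha(Q(v,\pi)) = v$ then $c(v) = c(u) + 1 > c(u)$, yielding $v \triangleleft u$ and contradicting $u \triangleleft v$. Otherwise $\n u$ has a predecessor in $Q(v,\pi)$; since its only possible predecessors are $u$ and $\w \n u$, and $u \notin Q(v,\pi)$, this forces $\w \n u \in Q(v,\pi)$. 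The cell $\w \n u$ has content $c(u)$ and so lies in $\A$, hence \refq{shh} gives $\pi(\w \n u) = \pi(\n u) = \pi(u)$, while the rpp property supplies $\pi(\w \n u) \le \pi(\s \w \n u) = \pi(\w u)$. These two combine to give $\pi(u) \le \pi(\w u)$, contradicting the sub-case hypothesis. Therefore $\pi'(u) \le \pi'(\n u)$, and $u \notin \cand(\pi')$.
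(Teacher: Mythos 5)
Your proof is correct and follows essentially the same route as the paper's: the crux in both is to observe that if $\n u\in Q(v,\pi)$ but $u\notin Q(v,\pi)$ and $\n u\neq v$, then $\n\w u\in Q(v,\pi)$ must hold, and combining the compatibility condition \refq{shh} at $\n\w u$ with the reverse-plane-partition inequality $\pi(\n\w u)\leq\pi(\w u)$ yields the contradiction. The only difference is organizational: the paper runs a single argument by contradiction (assume $u\in\cand(\pi-Q(v,\pi))$ and $u\notin\cand(\pi)$), whereas you split directly into sub-cases and verify $u\notin\cand(\pi-Q(v,\pi))$; the content is identical.
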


\begin{proof} Suppose $u\in\cand(\pi-Q(v,\pi))$ but $u\notin\cand(\pi)$.
First note that $u\in\A\cup\O$.
As $u\in\cand(\pi-Q(v,\pi))$ and since $\w u\in Q(v,\pi)$ implies $u\in Q(v,\pi)$ we must have $\pi(\w u)<\pi(u)$.
Thus $u\in A$ and $\pi(u)=\pi(\n u)$ because $u\notin\cand(\pi)$.
Moreover $\n u\in\A\cup\O$ and $\n u\in Q(v,\pi)$.
Since $\n u\triangleleft v$ it follows that $\n\w u\in Q(v,\pi)$ but then $\pi(\n\w u)\leq\pi(\w u)<\pi(\n u)$ contradicts condition \refq{shh} for $Q(v,\pi)$.
\end{proof}

\section{A bijection}\label{Section:bijection}

In this section rim-hook insertion is used to obtain a bijection between the set of reverse plane partitions $\rpp_{\lambda}$ and multi-sets of rim-hooks of $\lambda$.

\smallskip
The following two results essentially state that certain paths used in the definitions of insertion and extraction of rim-hooks cannot cross. 

\begin{lem}{crossing1} Let $\lambda$ be a partition, $\pi\in\rpp_{\lambda}$, $h$ a rim-hook of $\lambda$ and $u\in\cand(\pi)$.
If $u\triangleleft\alpha(P(h,\pi))$ then $h(u,\pi)<h$.
\end{lem}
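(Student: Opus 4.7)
Write $P=P(h,\pi)$, $Q=Q(u,\pi)$ and $g=h(u,\pi)$. Each south-west step of $P$ decreases content by one and each north-east step of $Q$ increases it by one; together with $\ell(P)=\ell(h)$ and $\ell(Q)=\ell(g)$ this gives $c(\alpha(P))=c(\omega(h))-\ell(h)=c(\alpha(h))$ and $c(\alpha(g))=c(u)$. The hypothesis $u\triangleleft\alpha(P)$ therefore splits into two cases: if $c(u)>c(\alpha(P))=c(\alpha(h))$ then $c(\alpha(g))>c(\alpha(h))$ and $g<h$ follows immediately from the definition of the rim-hook order; otherwise $c(u)=c(\alpha(h))$ with $u$ strictly south of $\alpha(P)$, and the remaining task is to prove $c(\omega(g))<c(\omega(h))$, equivalently $\ell(Q)<\ell(h)$.

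In this main case I plan to argue by contradiction, assuming $\ell(Q)\geq\ell(h)$, so that both paths have a cell on every anti-diagonal of content $c\in[c(u),c(\omega(h))]$; write $p_c$ and $q_c$ for those cells. The key invariant is
\[
q_c=p_c+(k,k)\quad\text{for some integer }k\geq 1,
\]
i.e., $q_c$ lies strictly south-east of $p_c$ on the anti-diagonal. The base case $c=c(u)$ is exactly the hypothesis. Evaluated at $c=c(\omega(h))$, the invariant would produce a cell $(i_h+k,\lambda_{i_h}+k)\in\lambda$ with $k\geq 1$, forcing $\lambda_{i_h+k}\geq\lambda_{i_h}+k>\lambda_{i_h}$, which contradicts the monotonicity $\lambda_{i_h+k}\leq\lambda_{i_h}$ of the partition. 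This is the sought contradiction.

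The inductive step branches into four geometric subcases depending on whether $Q$ moves N or E from $q_c$ and whether $P$ entered $p_c$ from the north or the east (so $p_{c+1}=\n p_c$ or $p_{c+1}=\e p_c$). A short direct calculation shows the new distance $k'$ equals $k$, $k-1$, $k+1$, or $k$ respectively. The only configuration that threatens $k'\geq 1$ is $Q$-N together with $P$-W with $k=1$, since then $q_{c+1}=p_{c+1}=\e p_c$. I would rule it out as follows: the $Q$-rule forces $q_c\in\O\cup\B$ and $\pi(q_c)=\pi(\n q_c)=\pi(\e p_c)=\pi(p_{c+1})$; since $c(q_c)=c$ belongs to $\O\cup\B$, the interleaving $o_1<i_1<\dots<o_{r+1}$ of corner contents forces $c+1\in\I\cup\B$, whence $p_{c+1}\in\B\cup\I$. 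But $\s p_{c+1}=\s\e p_c=q_c$ now gives $\pi(\s p_{c+1})=\pi(p_{c+1})$, so the south-step condition for the construction of $P$ at $p_{c+1}$ is satisfied; $P$ would then have stepped south rather than west, contradicting the subcase assumption.

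The main obstacle is exactly this dangerous collision subcase. Recognising the unique geometric configuration in which the two oppositely oriented paths would meet and then translating the $\pi$-values forced by the construction of $Q$ into a violation of the construction of $P$ is the delicate part of the argument, and is precisely where the content-region structure $\A,\O,\I,\B$ and the interleaving of corner contents become essential; the remaining content bookkeeping and the final contradiction from partition monotonicity are then routine.
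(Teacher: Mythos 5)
Your proof is correct and rests on the same key observation as the paper's: at a would-be meeting cell $v$ of the two paths, the $Q$-north-step rule forces $v\in\B\cup\I$ and $\pi(v)=\pi(\s v)$, which is incompatible with $P$ taking a west step out of $v$. The paper asserts the existence of such a crossing cell tersely and derives the contradiction there; you make the non-crossing explicit via the inductive invariant $q_c=p_c+(k,k)$ with $k\geq1$ and instead land the contradiction at the endpoint of the rim-hook via partition monotonicity --- a more detailed unpacking of the same argument.
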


\begin{proof}
The claim is trivial if $c(u)>c(\alpha(P(h,\pi)))$.
Thus assume that $c(u)=c(\alpha(P(h,\pi)))$.
It suffices to show that $\omega(P(h,\pi))\triangleleft\omega(Q(u,\pi))$.
If this is not the case then there exists a cell $v\in\lambda$ such that $v,\w v\in P(h,\pi)$ and $v,\s v\in Q(u,\pi)$.
It follows from the construction of $Q(u,\pi)$ that $v\in\B\cup\I$ and $\pi(v)=\pi(\s v)$, which contradicts $\w v\in P(h,\pi)$.
\end{proof}

\begin{lem}{crossing2} Let $\lambda$ be a partition, $\pi\in\rpp_{\lambda}$, $h$ a rim-hook of $\lambda$ and $u\in\cand(\pi)$.
If $\alpha(P(h,\pi))\tleq u$ then $h\leq h(u,\pi)$.
\end{lem}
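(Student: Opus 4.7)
The plan is to mirror the proof of \refl{crossing1} by first reducing to the case where the relevant paths begin on the same diagonal, and then ruling out a crossing of the opposite type to the one considered in \refl{crossing1}. Each step of a south-west-path or a north-east-path shifts the content by one, so $c(\alpha(P(h,\pi))) = c(\omega(h)) - \ell(h) = c(\alpha(h))$, and analogously $c(\alpha(h(u,\pi))) = c(u)$. If $c(\alpha(P(h,\pi))) > c(u)$, then $c(\alpha(h)) > c(\alpha(h(u,\pi)))$ and the desired inequality $h \leq h(u,\pi)$ is immediate from the definition of the order on rim-hooks. So I focus on the substantive case $c(\alpha(P(h,\pi))) = c(u)$ with $\alpha(P(h,\pi))$ weakly south of $u$; here I need $c(\omega(h)) \leq c(\omega(h(u,\pi)))$.

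The key structural observation is that the reverse $P'$ of $P(h,\pi)$, viewed as a north-east-path starting at $\alpha(P(h,\pi))$, obeys the same transition rule as $Q(u,\pi)$: at a cell $w$, go north to $\n w$ exactly when $w \in \O \cup \B$ and $\pi(w) = \pi(\n w)$, and go east to $\e w$ otherwise. The rule for $P'$ is obtained by decoding the SW-construction of $P(h,\pi)$ in reverse---$P'$ takes a north step at $p_k$ precisely when the SW-path of $P(h,\pi)$ takes a south step at $\n p_k$, which requires $\n p_k \in \B \cup \I$ and $\pi(\n p_k) = \pi(p_k)$---together with the elementary equivalence ``$\n w \in \B \cup \I$ if and only if $w \in \O \cup \B$'' that follows from the strict interlacing $o_1 < i_1 < o_2 < \cdots$ of corner contents.

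Assume for contradiction that $c(\omega(h)) > c(\omega(h(u,\pi)))$. Write $P' = (p_0, \dots, p_a)$ and $Q(u,\pi) = (q_0, \dots, q_b)$ with $a > b$; the cells $p_k$ and $q_k$ share content $c(u)+k$ and so lie on the same diagonal. Because $q_b$ is the easternmost cell of its row and $\lambda$ is a partition, the cell $p_b \in \lambda$ cannot lie strictly south of $q_b$; hence the rows satisfy $r_{P'}(b) \leq r_Q(b)$. Combined with $r_{P'}(0) \geq r_Q(0)$ and the fact that the row difference changes by at most one per step, the two rows must coincide at some $k^* \leq b$, forcing $p_{k^*} = q_{k^*}$. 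From that step onward the identical rule at identical cells propagates agreement, so $p_b = q_b$; but the termination of $Q(u,\pi)$ at $q_b$ means the north-rule fails at $q_b$ and $\e q_b \notin \lambda$, which forces the next cell of $P'$ to be $p_{b+1} = \e p_b \notin \lambda$, contradicting the validity of $P(h,\pi)$ as a path in $\lambda$ of length $a > b$.

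The main obstacle I anticipate is the careful verification of the rule-equivalence for $P'$ and $Q(u,\pi)$, especially the content-region identity ``$\n w \in \B \cup \I$ if and only if $w \in \O \cup \B$'' that aligns the two dynamics; once this alignment is in place, the no-crossing argument is a routine lattice-path manipulation using the east boundary of $\lambda$.
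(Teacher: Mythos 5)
Your plan mirrors the paper's: reduce to the equal-content case and produce a contradiction from a crossing of the NE-reading $P'$ of $P(h,\pi)$ with $Q(u,\pi)$. The correct half of your structural observation is that a north step of $P'$ at $p_k$ forces the SW-construction to have taken a south step at $\n p_k$, hence $\n p_k\in\B\cup\I$ and $\pi(\n p_k)=\pi(p_k)$, hence $p_k\in\O\cup\B$; this is exactly the local fact the paper extracts at its crossing cell. But the converse direction you assert --- that $p_k\in\O\cup\B$ and $\pi(p_k)=\pi(\n p_k)$ \emph{forces} $p_{k+1}=\n p_k$ --- is false, and it is precisely what your propagation step relies on. The SW-rule decides at the \emph{previous} cell of the walk, not at $p_k$: if $\pi(\e p_k)<\pi(\s\e p_k)$ then the SW-walk would also step from $\e p_k$ west into $p_k$, so which of $\n p_k,\e p_k$ is actually $p_{k+1}$ depends on the earlier history of $P(h,\pi)$, not on local data at $p_k$. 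Thus $P'$ does not obey a local forward rule of the same form as $Q(u,\pi)$.

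A concrete instance where this breaks your argument while the lemma's hypotheses hold: take $\lambda=(4,4,2,2)$, $\pi\equiv1$ except $\pi(3,2)=\pi(4,2)=2$, $h=h^{(1,2)}$ and $u=(3,1)\in\cand(\pi)$. Then $\alpha(P(h,\pi))=(3,1)=u$, so the hypothesis holds with equality, yet $P'$ and $Q(u,\pi)$ already diverge at step one: both are at $(2,1)\in\B$ with $\pi(2,1)=\pi(1,1)$, so $Q$ steps north to $(1,1)$, while $P'$ steps east to $(2,2)$ (because $\pi(2,2)<\pi(3,2)$). So ``identical rule at identical cells propagates agreement'' fails, and your chain $p_{k^*}=q_{k^*}\Rightarrow\cdots\Rightarrow p_b=q_b$ does not go through. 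The repair is to observe that the only possible divergence at a common cell is $Q$-north/$P'$-east (by the correct one-sided implication together with $Q$'s rule), which pushes $P'$ weakly south of $Q$; since $P'$ must be weakly north of $Q$ at content $c(\omega(Q))$, there is a \emph{last} common cell $v$ before $Q$ ends where $P'$ goes north and $Q$ goes east or terminates, and there the one-sided implication gives $v\in\O\cup\B$ and $\pi(v)=\pi(\n v)$, contradicting $Q$'s step. This is the crossing cell the paper exhibits; your proof as written does not reach it.
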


\begin{proof}
The claim is trivial if $c(u)<c(\alpha(P(h,\pi)))$
Thus assume that $c(u)=c(\alpha(P(h,\pi)))$.
It suffices to show that $\omega(Q(u,\pi))\tleq\omega(P(h,\pi))$.
If this is not the case then there exists a cell $v\in\lambda$ such that $v,\n v\in P(h,\pi)$ and $v\in Q(u,\pi)$ and $\n v\notin Q(u,\pi)$.
From the construction of $P(h,\pi)$ it follows that $v\in\O\cup\B$ and $\pi(v)=\pi(\n v)$.
But this contradicts $\n v\notin Q(u,\pi)$.
\end{proof}

The following theorem guarantees that the successive insertion of rim-hooks into the zero reverse plane partition never fails as long as we respect the reverse lexicographic order on rim-hooks.

\begin{thm}{welldef} Let $\lambda$ be a partition and $h_1,h_2,\dots,h_s$ be rim-hooks of $\lambda$ such that $h_i\leq h_{i+1}$ for all $i\in[s-1]$.
Then $h_{i}$ inserts into $h_{i+1}*\dots*h_s*0$ and the minimum of $\cand(h_i*\dots*h_s*0)$ with respect to the content order is $\alpha(P(h_i,h_{i+1}*\dots*h_s*0))$ for all $i\in[s]$.
\end{thm}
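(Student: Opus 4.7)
The plan is to proceed by descending induction on $i$, starting from $i=s$. Write $\pi_i:=h_i*h_{i+1}*\cdots*h_s*0$ for $i\in[s]$ and $\pi_{s+1}:=0$, so the assertion at level $i$ reads: $h_i$ inserts into $\pi_{i+1}$, and the $\tleq$-minimum of $\cand(\pi_i)$ is $\alpha(P(h_i,\pi_{i+1}))$.

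The base case $i=s$ is a direct verification. Since $\pi_{s+1}=0$, the guard $\pi(u)=\pi(\s u)$ is automatic, so $P(h_s,0)$ takes a south step at every cell of $\B\cup\I$ and a west step at every cell of $\O\cup\A$; by the structural properties of rim-hooks recalled in \refs{rimhooks}, this path is precisely the reverse of $h_s$, compatibility and the RPP property of $0+P(h_s,0)$ are immediate, and $\alpha(P(h_s,0))=\alpha(h_s)$. Any cell $u\in h_s\cap(\A\cup\O)$ different from $\alpha(h_s)$ must have $\w u\in h_s$: its NE-predecessor in $h_s$ cannot be $\s u$ because $\s u\notin\lambda$ when $u\in\O$ and $\s u$ has content in $\A\cup\I$ (forcing an east-step) when $u\in\A$. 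Hence $\pi(u)=\pi(\w u)=1$ kills the candidate condition, and combined with \refl{factorpath} this gives $\cand(h_s*0)=\{\alpha(h_s)\}$, trivially $\tleq$-minimal.

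For the inductive step, assume the claim at level $i+1$ and set $\pi:=\pi_{i+1}$. Applying \refl{factorpath} and \refl{factoriff} to the factor $h_{i+1}$ of $\pi$ shows that $P(h_{i+1},\pi_{i+2})$ is the reverse of $Q(v^*,\pi)$, where $v^*:=\alpha(P(h_{i+1},\pi_{i+2}))$; matching tails and lengths identifies $h(v^*,\pi)$ with $h_{i+1}$. By the inductive hypothesis, $v^*$ is the $\tleq$-minimum of $\cand(\pi)$. Suppose $h_i$ fails to insert into $\pi$: \reft{noinsert} supplies $w\in\cand(\pi)$ with $w\triangleleft\alpha(P(h_i,\pi))$; minimality and transitivity force $v^*\triangleleft\alpha(P(h_i,\pi))$, and \refl{crossing1} then yields $h_{i+1}=h(v^*,\pi)<h_i$, contradicting $h_i\leq h_{i+1}$. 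Hence $h_i$ inserts into $\pi$ and $\pi_i=h_i*\pi$ is defined; moreover $\alpha(P(h_i,\pi))\in\cand(\pi_i)$ by another application of \refl{factorpath}. The same pattern proves minimality at level $i$: if some $u\in\cand(\pi_i)$ satisfied $u\triangleleft\alpha(P(h_i,\pi))$ then $u\neq\alpha(P(h_i,\pi))$, \refl{candidateQ} applied to the two distinct candidates $\alpha(P(h_i,\pi))$ and $u$ of $\pi_i$ would place $u$ in $\cand(\pi_i-Q(\alpha(P(h_i,\pi)),\pi_i))=\cand(\pi)$, and the $v^*$-argument above would again produce $h_{i+1}<h_i$.

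The main obstacle is locating a usable witness for \refl{crossing1}: neither the candidate $w$ produced by \reft{noinsert} nor a hypothetical too-small candidate $u$ of $\pi_i$ is a priori comparable to $h_{i+1}$, but the inductive minimum $v^*$ of $\cand(\pi)$ dominates both in $\tleq$ and satisfies $h(v^*,\pi)=h_{i+1}$ exactly by the factorisation $\pi=h_{i+1}*\pi_{i+2}$, turning $v^*\triangleleft\alpha(P(h_i,\pi))$ into a direct violation of the sorted order $h_i\leq h_{i+1}$.
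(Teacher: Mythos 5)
Your proof is correct and follows essentially the same route as the paper: reduce to the statement for the shorter suffix sequence, apply \reft{noinsert} to produce a candidate witness, transfer to the $\tleq$-minimal candidate (which the inductive hypothesis identifies as $\alpha(P(h_{i+1},\pi_{i+2}))$), identify $h(v^*,\pi)=h_{i+1}$ via \refl{factorpath}, and derive a contradiction with $h_i\leq h_{i+1}$ from \refl{crossing1}, with \refl{candidateQ} handling the minimality claim. The only cosmetic differences are the explicit verification of the base case $i=s$ (the paper leaves this implicit) and the framing as descending induction on $i$ rather than induction on $s$; you also spell out the ``without loss of generality'' step the paper uses, which is precisely the observation that minimality of $v^*$ plus $w\triangleleft\alpha(P(h_i,\pi))$ forces $v^*\triangleleft\alpha(P(h_i,\pi))$.
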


\begin{proof} By induction on $s$ we may assume that $h_{i}$ inserts into $h_{i+1}*\dots*h_s*0$ for all $i>1$ and that the minimum of $\cand(\pi)$ is given by $\alpha(Q)$ where $\pi=h_2*\dots*h_s*0$ and $Q=P(h_2,h_3*\dots*h_s*0)$.

If $h_1$ does not insert into $\pi$ then by \reft{noinsert} there exists a candidate $u\in\cand(\pi)$ with $u\triangleleft\alpha(P(h_1,\pi))$.
Without loss of generality assume that $u=\alpha(Q)$.
By \refl{factorpath} we obtain $Q=Q(u,\pi)$ and hence $h(u,\pi)=h_2$.
However, \refl{crossing1} implies $h_1>h(u,\pi)=h_2$, which is a contradiction.

%Since $h_1\leq h_2$ it follows that $c(\alpha(P))=c(\alpha(P))$ and $\omega(P)\tleq\omega(Q)$.
%Thus there must be a cell $w\in\lambda$ such that $w,\w w\in P$ and $w,\s w\in Q$.
%By construction of $Q$ we must have $w\in\B\cup\I$ and $\tilde{\pi}(w)=\tilde{\pi}(\s w)$, where $\tilde{\pi}=\pi-Q$.
%Hence $\pi(w)=\pi(\s w)$.
%But this is a contradiction to $\w w\in P$.
Thus $h_1$ inserts into $\pi$.
To see the second claim, assume that there is a $v\in\cand(h_1*\pi)$ with $v\triangleleft\alpha(P(h,\pi))$.
Then $v\in\cand(\pi)$ due to \refl{candidateQ}, we can assume that $v=\alpha(Q)$ and deduce a contradiction as above.
\end{proof}

%\begin{lem}{distinctalpha} Let $\lambda$ be a partition, $h$ a rim-hook of $\lambda$ and $\pi,\tilde{\pi}\in\rpp_{\lambda}$ be distinct such that $h$ inserts into $\pi$ and $\tilde{\pi}$ and such that $h*\pi=h*\tilde{\pi}$.
%Let $P$ be the insertion path of $h$ in $\pi$ and $Q$ the insertion path of $h$ in $\tilde{\pi}$.
%Then $\alpha(P)\neq\alpha(Q)$.
%\end{lem}
%
%\begin{proof} Clearly $P$ is the unique path in $\lambda$ such that $\pi=(h*\pi)-P$.
%Let $u=\alpha(P)$.
%The path $P$ is determined by \refq{Qw} because of \refl{factorpath}.
%The same argument holds for $Q$ thus $\alpha(P)=\alpha(Q)$ would imply $P=Q$ and hence $\pi=\tilde{\pi}$.
%\end{proof}

%Define the \emph{quotient} $\pi/h$ of a reverse plane partition by one of its factors $h$ as the reverse plane partition $\pi-Q_u$ where $u\in\cand(\pi)$ is minimal with respect to $\tleq$ among all candidates $w$ such that $\pi-Q_w\in\rpp_{\lambda}$ and $\pi=h*(\pi-Q_w)$.
%Note that $\pi/h$ is well-defined for all $h\in\F(\pi)$ due to \refl{distinctalpha}.

Let $h_1,h_2,\dots,h_s$ be a sequence of rim-hooks in $\lambda$ such that $h_i\leq h_{i+1}$ for all $i\in[s-1]$.
Then we call $\pi=h_1*h_2*\dots*h_s*0$ a \emph{lexicographic factorisation} of $\pi$.
The next theorem implies that every reverse plane partition possesses a lexicographic factorisation.

\begin{thm}{surj} Let $\lambda$ be a partition and $\pi\in\rpp_{\lambda}$.
Define a sequence of rim-hooks $h_1,h_2,\dots,h_s$ by letting $h_i=h(v_i,\pi_i)$ for all $i\in[s]$, where $v_i$ is the minimum of $\cand(\pi_i)$ with respect to the content order, $\pi_1=\pi$, $\pi_{i+1}=\pi_i-Q(v_i,\pi_i)$, and $\pi_{s+1}$ is the first reverse plane partition in this sequence with only zero entries.
Then $h_i\leq h_{i+1}$ for all $i\in[s-1]$ and $\pi=h_1*h_2*\dots*h_s*0$.
\end{thm}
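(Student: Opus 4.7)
My plan is to split the claim into two parts: the factorization identity $\pi = h_1 * h_2 * \cdots * h_s * 0$, which follows essentially from the results of \refs{factors}, and the ordering $h_i \leq h_{i+1}$, whose proof hinges on the crossing lemmas.

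First I would verify that the sequence is well-defined and establish the factorization. Since $\pi_i$ is non-zero for $i \leq s$, \refc{existcand} gives $\cand(\pi_i) \neq \emptyset$, so the $\tleq$-minimum $v_i$ exists; and the last paragraph of the proof of \reft{existsfactor} tells us that $h(v_i,\pi_i)$ is a factor of $\pi_i$. Thus $\pi_{i+1} = \pi_i - Q(v_i,\pi_i)$ is a reverse plane partition of strictly smaller size, so the procedure terminates. Since each $h_i$ is a factor, \refl{factoriff} guarantees that $h_i$ inserts into $\pi_{i+1}$ with $h_i * \pi_{i+1} = \pi_i$. A downward induction on $i$ then gives $\pi_i = h_i * h_{i+1} * \cdots * h_s * 0$, and in particular $\pi = \pi_1 = h_1 * \cdots * h_s * 0$.

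The central step in proving the ordering is the intermediate claim $v_i \tleq v_{i+1}$. To establish this I would argue by contradiction: assume $v_{i+1} \triangleleft v_i$. Since $v_i$ is the $\tleq$-minimum of $\cand(\pi_i)$, any cell strictly $\tleq$-below $v_i$ fails to be a candidate of $\pi_i$; in particular $v_{i+1} \notin \cand(\pi_i)$. Applying \refl{candidateP} to $\pi = \pi_i$ with $u = v_{i+1}$ and $v = v_i$ (whose compatibility and reverse-plane-partition hypotheses are satisfied because $h_i = h(v_i,\pi_i)$ is a factor of $\pi_i$) then yields $v_{i+1} \notin \cand(\pi_i - Q(v_i,\pi_i)) = \cand(\pi_{i+1})$, contradicting the very definition of $v_{i+1}$.

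With $v_i \tleq v_{i+1}$ in hand the conclusion follows from \refl{crossing2}. Because $h_i$ is a factor of $\pi_i$, \refl{factoriff} combined with \refl{factorpath} and the uniqueness from \refl{insertpath} shows that $P(h_i,\pi_{i+1})$ is the reverse path of $Q(v_i,\pi_i)$, so $\alpha(P(h_i,\pi_{i+1})) = v_i$. Feeding $h = h_i$, $\pi = \pi_{i+1}$, and $u = v_{i+1}$ into \refl{crossing2} then gives $h_i \leq h(v_{i+1},\pi_{i+1}) = h_{i+1}$, as required. I expect the main obstacle to be the intermediate inequality $v_i \tleq v_{i+1}$: once the bookkeeping between the candidate sets of $\pi_i$ and $\pi_{i+1}$ is handled correctly, the crossing lemma translates the content-order inequality on cells directly into the reverse lexicographic inequality on rim-hooks.
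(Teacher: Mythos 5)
Your proposal is correct and matches the paper's argument step for step: both establish that each $h_i=h(v_i,\pi_i)$ is a factor via \reft{existsfactor}, deduce $v_i\tleq v_{i+1}$ from \refl{candidateP}, and then combine \refl{crossing2} with the identity $Q(v_i,\pi_i)=P(h_i,\pi_{i+1})$ to obtain $h_i\leq h_{i+1}$. The only cosmetic difference is that the paper packages the ordering claim as an induction on $|\pi|$ (so it compares only $h_1$ and $h_2$ directly), whereas you run the one-step comparison explicitly for every consecutive pair; the underlying reasoning is the same.
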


\begin{proof} Let $u$ be the minimum of $\cand(\pi)$.
Note that $h(u,\pi)\in\F(\pi)$ due to \reft{existsfactor}.
%Then $\tilde{\pi}\in\rpp_{\lambda}$ and $\tilde{\pi}=\pi/h_u$.
By induction on the size of $\pi$ we may assume that $h_2*\dots*h_s*0$ is a lexicographic factorisation of $\pi_2$.
%Thus it suffices to show that $h_1\leq h_2$.
Let $v$ be the minimum of $\cand(\pi_2)$.
By \refl{candidateP} we have $u\tleq v$.
Using \refl{crossing2} and $Q(u,\pi)=P(h_1,\pi_2)$ we obtain $h_1\leq h(v,\pi_2)=h_2$.
%
%Clearly $h_1<h_2$ if $c(v)<c(u)$.
%Thus assume that $c(v)=c(u)$ and construct paths $P,Q$ according to \refq{Qw} with $v=\alpha(P)$ and $u=\alpha(Q)$.
%It remains to show that $c(\omega(P))\geq c(\omega(Q))$.
%Suppose $c(\omega(P))<c(\omega(Q))$ then $P$ and $Q$ cross, that is, there exists a cell $w\in\lambda$ such that $w,\e w\in P$ and $w,\n w\in Q$.
%It follows that $w\in\O\cup\B$ and $\pi(w)=\pi(\n w)$.
%But then also $\tilde{\pi}(w)=\tilde{\pi}(\n w)$ contradicting $\w w\in P$.
\end{proof}

Furthermore lexicographic factorisations are unique.

\begin{thm}{inj} Let $\lambda$ be a partition, $\pi\in\rpp_{\lambda}$ and $\pi=h_1*h_2*\dots*h_s*0$ be a lexicographic factorisation of $\pi$.
Then $h_2*\dots*h_s*0=\pi-Q(v,\pi)$ where $v$ is the minimum of $\cand(\pi)$ with respect to the content order.
\end{thm}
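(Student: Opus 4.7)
The plan is to set $\tilde\pi = h_2*\cdots*h_s*0$ and show directly that $\tilde\pi = \pi - Q(v,\pi)$, where $v$ is the minimum of $\cand(\pi)$ with respect to the content order. The argument assembles three facts: an identification of $\pi$ in terms of the insertion path $P := P(h_1,\tilde\pi)$, an identification of $\alpha(P)$ with $v$ via \reft{welldef}, and a matching of $P$ with the reverse of $Q(v,\pi)$ via \refl{factorpath}.

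First I would note that, by the definition of the insertion operation, $\pi = h_1*\tilde\pi = \tilde\pi + P$. Next, applying \reft{welldef} to the lexicographic sequence $h_1\leq h_2\leq\dots\leq h_s$ with $i=1$ yields that $\alpha(P)$ is precisely the minimum of $\cand(\pi)$ in the content order, hence $\alpha(P) = v$. This is the main conceptual step, and it is essentially free given the earlier machinery.

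The remaining task is to show that $P$, viewed as a south-west-path, coincides with the reverse of $Q(v,\pi)$. For this I would invoke \refl{factorpath} applied to the rim-hook $h_1$, the reverse plane partition $\pi$, and the path $P$. The hypotheses $\omega(P) = \omega(h_1)$ and $\ell(P) = \ell(h_1)$ are immediate from the definition of $P(h_1,\tilde\pi)$. The equation $\pi - P = \tilde\pi$ (a reverse plane partition) follows from $\pi = \tilde\pi + P$.

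The one subtlety — and the only real point that needs to be checked — is that $(P,\pi)$ is compatible, since what we know a priori from the insertion procedure is only that $(P,\tilde\pi)$ is compatible. This is straightforward: for condition \refq{shh}, if $u\in P$ and $u\in\I\cup\A$ then $\e u\in P$ and $\tilde\pi(u) = \tilde\pi(\e u)$, and since both $u$ and $\e u$ lie in $P$ the increments from $\tilde\pi$ to $\pi$ agree, giving $\pi(u) = \pi(\e u)$; condition \refq{shp} is verified the same way. With compatibility in hand, \refl{factorpath} delivers $P = Q(v,\pi)'$, and therefore $\pi - Q(v,\pi) = \pi - P = \tilde\pi$, completing the argument. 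The main obstacle is thus purely bookkeeping: confirming that the compatibility condition transfers from $\tilde\pi$ to $\pi$ along the shared path $P$.
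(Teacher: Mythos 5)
Your proof is correct, and it is genuinely different from the paper's argument. The paper proceeds by contradiction combined with induction on $\abs{\pi}$: it assumes $\tilde\pi=\pi-Q(w,\pi)$ for some candidate $w$ strictly larger than the minimum $v$, observes via \refl{candidateQ} that $v\in\cand(\tilde\pi)$, invokes the inductive hypothesis on $\tilde\pi$, and then uses \refl{crossing1} together with the identification $Q(w,\pi)=P(h_1,\tilde\pi)$ to deduce $h_2<h_1$, contradicting lexicographic order. You instead argue directly: \reft{welldef} applied at $i=1$ immediately pins down $\alpha(P(h_1,\tilde\pi))$ as the minimal candidate $v$; you then check that compatibility of $(P,\tilde\pi)$ transfers to $(P,\pi)$ because $\pi$ and $\tilde\pi$ differ by a constant shift on the cells of $P$; and \refl{factorpath} forces $P$ to be the reverse of $Q(v,\pi)$, giving $\pi-Q(v,\pi)=\tilde\pi$ at once. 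Your route avoids both the induction and the contradiction, and it makes transparent that the uniqueness in \reft{inj} is already latent in \reft{welldef} together with the uniqueness clause of \refl{factorpath}. The one step the paper's phrasing suppresses and you handle explicitly --- the compatibility transfer --- is elementary (both $u$ and its relevant neighbour lie on $P$, so their values in $\pi$ and $\tilde\pi$ differ by the same increment), and indeed the paper relies on the same tacit transfer when it writes $Q(w,\pi)=P(h_1,\tilde\pi)$. Overall your argument is shorter and more structural; the paper's is self-contained within Section 6 without leaning as heavily on the full strength of \reft{welldef}.
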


\begin{proof} Set $\tilde{\pi}=h_2*\dots*h_s*0$ and assume that there exist candidates $v,w\in\cand(\pi)$ with $v\triangleleft w$ and $\tilde{\pi}=\pi-Q(w,\pi)$.
By \refl{candidateQ} we have $v\in\cand(\tilde{\pi})$.
Clearly $\tilde{\pi}=h_2*\dots*h_s*0$ is a lexicographic factorisation of $\tilde{\pi}$.
By induction on the size of $\pi$ we may assume that $h(u,\tilde{\pi})=h_2$ where $u\triangleleft w$.
Using \refl{crossing1} and $Q(w,\pi)=P(h_1,\tilde{\pi})$ we see that $h_2<h_1$, which is a contradiction.
%
%$\alpha(Q)\tleq u$, where $\tilde{\pi}-Q=\tilde{\pi}/h_2$.
%Since $h\leq h_2$ we must have $c(\alpha(P))=c(\alpha(Q))$ and $c(\omega(P))\leq c(\omega(Q))$.
%Therefore there exists a cell $v\in\lambda$ such that $v,\w v\in P$ and $v,\s v\in Q$.
%But $\tilde{\pi}=\pi(\s v)\geq\pi(v)>\tilde{\pi}$ contradicts the construction of $Q$.
\end{proof}

The previous results of this section are collected in main theorem of this article. 
Note that this is an equivalent formulation of \reft{main}.

\begin{thm}{bij} Let $\lambda$ be a partition. Then the map sending each reverse plane partition of shape $\lambda$ to its lexicographic factorisation is a bijection between $\rpp_{\lambda}$ and the multi-sets of rim-hooks of $\lambda$.
\end{thm}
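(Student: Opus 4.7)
The plan is to assemble Theorems~\ref{Theorem:welldef}, \ref{Theorem:surj} and \ref{Theorem:inj} into the asserted bijection. Let $\Phi$ denote the map that sends each $\pi\in\rpp_{\lambda}$ to the multi-set $\{h_1,\dots,h_s\}$ of rim-hooks appearing in a lexicographic factorisation $\pi=h_1*h_2*\cdots*h_s*0$; existence of such a factorisation is granted by Theorem~\ref{Theorem:surj}, so $\Phi$ is defined provided the multi-set is independent of the chosen factorisation. Conversely, let $\Psi$ send a multi-set $\{h_1,\dots,h_s\}$ to $h_1*h_2*\cdots*h_s*0$, where the rim-hooks are arranged so that $h_1\leq h_2\leq\cdots\leq h_s$; this is well-defined thanks to Theorem~\ref{Theorem:welldef}, which guarantees that each successive insertion succeeds.

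To verify that $\Phi$ is well-defined, I would argue by induction on $|\pi|$ that every reverse plane partition has a unique lexicographic factorisation. The base case $\pi=0$ corresponds to the empty sequence. For the induction step, suppose $\pi=h_1*h_2*\cdots*h_s*0$ is a lexicographic factorisation and let $v$ be the minimum of $\cand(\pi)$ with respect to $\tleq$. Theorem~\ref{Theorem:inj} identifies $\tilde\pi\defeq h_2*\cdots*h_s*0$ with $\pi-Q(v,\pi)$, which is intrinsic to $\pi$. Moreover, since $h_1$ inserts into $\tilde\pi$ with outcome $\pi$, the argument in the proof of Theorem~\ref{Theorem:inj} shows $P(h_1,\tilde\pi)=Q(v,\pi)$, whence $h_1=h(v,\pi)$ is determined by $\pi$ alone. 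Applying the inductive hypothesis to $\tilde\pi$ then fixes $h_2,\dots,h_s$.

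With well-definedness in hand, the bijectivity is immediate. The identity $\Psi\circ\Phi=\mathrm{id}_{\rpp_\lambda}$ follows from the definition of $\Phi$: if $\pi=h_1*\cdots*h_s*0$ is the lexicographic factorisation, then $\Psi(\Phi(\pi))=h_1*\cdots*h_s*0=\pi$. For the converse, let $M=\{h_1,\dots,h_s\}$ be a multi-set sorted so that $h_1\leq\cdots\leq h_s$ and let $\pi=\Psi(M)$. By construction $\pi=h_1*\cdots*h_s*0$ is itself a lexicographic factorisation of $\pi$, and by the uniqueness established above it coincides with the factorisation returned by $\Phi$, so $\Phi(\pi)=M$.

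The crux of the whole argument is the uniqueness step, since Theorem~\ref{Theorem:inj} as stated only records the tail $\tilde\pi$ and not the leading rim-hook $h_1$. The key observation used to pin down $h_1$ is the equality $P(h_1,\tilde\pi)=Q(v,\pi)$, which emerges inside the proof of Theorem~\ref{Theorem:inj}; once this is made explicit, the identity $h_1=h(v,\pi)$ falls out and the induction runs through without further obstacles.
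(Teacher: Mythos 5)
Your proof is correct and follows essentially the same strategy as the paper, which also assembles Theorems~\ref{Theorem:welldef}, \ref{Theorem:surj} and \ref{Theorem:inj} into the bijection. The one place you expand on the paper is in noting explicitly that Theorem~\ref{Theorem:inj} pins down only the tail $\tilde\pi=h_2*\cdots*h_s*0$, and that the leading rim-hook $h_1$ is then recovered via $P(h_1,\tilde\pi)=Q(v,\pi)$, i.e.\ $h_1=h(v,\pi)$; the paper leaves this step implicit, so your added care is a fair clarification rather than a new route.
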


\begin{proof}
Consider the map sending a multi-set of rim-hooks $h_1,h_2,\dots,h_s$ to the reverse plane partition $h_{\sigma(1)}*h_{\sigma(2)}*\cdots*h_{\sigma(s)}*0$, where $\sigma\in\S_s$ rearranges the rim-hooks in lexicographic order.
This map is well-defined by \reft{welldef}, surjective by \reft{surj} and injective by \reft{inj}.
Moreover it is clearly the inverse of the map described in the theorem.
\end{proof}

Figures~\ref{Figure:fac} and~\ref{Figure:lex} demonstrate that a reverse plane partition may have multiple factorisations.
However, only in \reff{lex} are the rim-hooks inserted in reverse lexicographic order. 

\myfig{}{fac}{
\begin{tikzpicture}[tdplot_main_coords]
\begin{scope}[xshift=0cm,yshift=-2cm]
\draw[line width=2pt,->](1.5,1.5,1.5)--(1.5,1.5,0);
\begin{scope}[xshift=6cm,yshift=0cm]
\draw[line width=2pt,->](2.5,.5,1.5)--(2.5,.5,0);
\end{scope}
\begin{scope}[xshift=12cm,yshift=0cm]
\draw[line width=2pt,->](2.5,.5,1.5)--(2.5,.5,0);
\end{scope}
\begin{scope}[xshift=18cm,yshift=0cm]
\draw[line width=2pt,->](2.5,.5,1.5)--(2.5,.5,0);
\end{scope}
\end{scope}
\begin{scope}[xshift=0cm,yshift=0cm]
\clip(2,1,0)--(2,2,0)--(2,3,0)--(2,4,0)--(1,4,0)--(0,4,0)--(0,4,1)--(0,3,1)--(1,3,1)--(1,2,1)--(1,1,1)--(2,1,1)--cycle;
\draw[line width=1pt,fill=black!5]
(1,3,1)--(2,3,1)--(2,4,1)--(1,4,1)--cycle
(1,2,1)--(2,2,1)--(2,3,1)--(1,3,1)--cycle
(1,1,1)--(2,1,1)--(2,2,1)--(1,2,1)--cycle
(0,3,1)--(1,3,1)--(1,4,1)--(0,4,1)--cycle
;
\draw[line width=1pt,fill=black!20]
(2,3,0)--(2,4,0)--(2,4,1)--(2,3,1)--cycle
(2,2,0)--(2,3,0)--(2,3,1)--(2,2,1)--cycle
(2,1,0)--(2,2,0)--(2,2,1)--(2,1,1)--cycle
;
\draw[line width=1pt,fill=black!50]
(1,4,0)--(1,4,1)--(2,4,1)--(2,4,0)--cycle
(0,4,0)--(0,4,1)--(1,4,1)--(1,4,0)--cycle
;
\draw[line width=2pt](2,1,0)--(2,2,0)--(2,3,0)--(2,4,0)--(1,4,0)--(0,4,0)--(0,4,1)--(0,3,1)--(1,3,1)--(1,2,1)--(1,1,1)--(2,1,1)--cycle;
\end{scope}
\begin{scope}[xshift=6cm,yshift=0cm]
\clip(3,0,0)--(3,1,0)--(3,2,0)--(2,2,0)--(1,2,0)--(1,2,1)--(1,1,1)--(2,1,1)--(2,0,1)--(3,0,1)--(3,0,0)--cycle;
\draw[line width=1pt,fill=cyan]
(1,1,1)--(2,1,1)--(2,2,1)--(1,2,1)--cycle
;
\draw[line width=1pt,fill=red]
(2,1,1)--(3,1,1)--(3,2,1)--(2,2,1)--cycle
(2,0,1)--(3,0,1)--(3,1,1)--(2,1,1)--cycle
;
\draw[line width=1pt,fill=black!20]
(3,1,0)--(3,2,0)--(3,2,1)--(3,1,1)--cycle
(3,0,0)--(3,1,0)--(3,1,1)--(3,0,1)--cycle
;
\draw[line width=1pt,fill=black!50]
(2,2,0)--(2,2,1)--(3,2,1)--(3,2,0)--cycle
(1,2,0)--(1,2,1)--(2,2,1)--(2,2,0)--cycle
;
\draw[line width=2pt](3,0,0)--(3,1,0)--(3,2,0)--(2,2,0)--(1,2,0)--(1,2,1)--(1,1,1)--(2,1,1)--(2,0,1)--(3,0,1)--(3,0,0)--cycle;
\end{scope}
\begin{scope}[xshift=12cm,yshift=0cm]
\clip(3,0,0)--(3,1,0)--(3,2,0)--(2,2,0)--(2,3,0)--(1,3,0)--(1,3,1)--(1,2,1)--(1,1,1)--(2,1,1)--(2,0,1)--(3,0,1)--(3,0,0)--cycle;
\draw[line width=1pt,fill=black!5]
(2,1,1)--(3,1,1)--(3,2,1)--(2,2,1)--cycle
(2,0,1)--(3,0,1)--(3,1,1)--(2,1,1)--cycle
(1,2,1)--(2,2,1)--(2,3,1)--(1,3,1)--cycle
(1,1,1)--(2,1,1)--(2,2,1)--(1,2,1)--cycle
;
\draw[line width=1pt,fill=black!20]
(3,1,0)--(3,2,0)--(3,2,1)--(3,1,1)--cycle
(3,0,0)--(3,1,0)--(3,1,1)--(3,0,1)--cycle
(2,2,0)--(2,3,0)--(2,3,1)--(2,2,1)--cycle
;
\draw[line width=1pt,fill=black!50]
(2,2,0)--(2,2,1)--(3,2,1)--(3,2,0)--cycle
(1,3,0)--(1,3,1)--(2,3,1)--(2,3,0)--cycle
;
\draw[line width=2pt](3,0,0)--(3,1,0)--(3,2,0)--(2,2,0)--(2,3,0)--(1,3,0)--(1,3,1)--(1,2,1)--(1,1,1)--(2,1,1)--(2,0,1)--(3,0,1)--(3,0,0)--cycle;
\end{scope}
\begin{scope}[xshift=18cm,yshift=0cm]
\clip(3,0,0)--(3,1,0)--(2,1,0)--(2,1,1)--(2,0,1)--(3,0,1)--cycle;
\draw[line width=1pt,fill=black!5]
(2,0,1)--(3,0,1)--(3,1,1)--(2,1,1)--cycle
;
\draw[line width=1pt,fill=black!20]
(3,0,0)--(3,1,0)--(3,1,1)--(3,0,1)--cycle
;
\draw[line width=1pt,fill=black!50]
(2,1,0)--(2,1,1)--(3,1,1)--(3,1,0)--cycle
;
\draw[line width=2pt](3,0,0)--(3,1,0)--(2,1,0)--(2,1,1)--(2,0,1)--(3,0,1)--cycle;
\end{scope}
\begin{scope}[xshift=0cm,yshift=-44mm]
\begin{scope}[xshift=0cm,yshift=0cm]
\clip(3,0,0)--(3,4,0)--(0,4,0)--(0,3,0)--(1,3,0)--(1,2,0)--(1,1,0)--(2,1,0)--(2,0,0)--(3,0,0)--cycle;
\draw[line width=1pt,fill=black!5]
(2,3,0)--(3,3,0)--(3,4,0)--(2,4,0)--cycle
(2,2,0)--(3,2,0)--(3,3,0)--(2,3,0)--cycle
(2,1,0)--(3,1,0)--(3,2,0)--(2,2,0)--cycle
(2,0,0)--(3,0,0)--(3,1,0)--(2,1,0)--cycle
(1,3,0)--(2,3,0)--(2,4,0)--(1,4,0)--cycle
(1,2,0)--(2,2,0)--(2,3,0)--(1,3,0)--cycle
(1,1,0)--(2,1,0)--(2,2,0)--(1,2,0)--cycle
(0,3,0)--(1,3,0)--(1,4,0)--(0,4,0)--cycle
;
\draw[line width=1pt,fill=black!20]
;
\draw[line width=1pt,fill=black!50]
;
\draw[line width=2pt](3,0,0)--(3,4,0)--(0,4,0)--(0,3,0)--(1,3,0)--(1,2,0)--(1,1,0)--(2,1,0)--(2,0,0)--(3,0,0)--cycle;
\end{scope}
\begin{scope}[xshift=6cm,yshift=0cm]
\clip(3,0,0)--(3,4,0)--(0,4,0)--(0,4,1)--(0,3,1)--(1,3,1)--(1,2,1)--(1,1,1)--(2,1,1)--(2,1,0)--(2,0,0)--(3,0,0)--cycle;
\draw[line width=1pt,fill=black!5]
(2,3,0)--(3,3,0)--(3,4,0)--(2,4,0)--cycle
(2,2,0)--(3,2,0)--(3,3,0)--(2,3,0)--cycle
(2,1,0)--(3,1,0)--(3,2,0)--(2,2,0)--cycle
(2,0,0)--(3,0,0)--(3,1,0)--(2,1,0)--cycle
(1,3,1)--(2,3,1)--(2,4,1)--(1,4,1)--cycle
(1,2,1)--(2,2,1)--(2,3,1)--(1,3,1)--cycle
(1,1,1)--(2,1,1)--(2,2,1)--(1,2,1)--cycle
(0,3,1)--(1,3,1)--(1,4,1)--(0,4,1)--cycle
;
\draw[line width=1pt,fill=black!20]
(2,3,0)--(2,4,0)--(2,4,1)--(2,3,1)--cycle
(2,2,0)--(2,3,0)--(2,3,1)--(2,2,1)--cycle
(2,1,0)--(2,2,0)--(2,2,1)--(2,1,1)--cycle
;
\draw[line width=1pt,fill=black!50]
(1,4,0)--(1,4,1)--(2,4,1)--(2,4,0)--cycle
(0,4,0)--(0,4,1)--(1,4,1)--(1,4,0)--cycle
;
\draw[line width=2pt](3,0,0)--(3,4,0)--(0,4,0)--(0,4,1)--(0,3,1)--(1,3,1)--(1,2,1)--(1,1,1)--(2,1,1)--(2,1,0)--(2,0,0)--(3,0,0)--cycle;
\end{scope}
\begin{scope}[xshift=12cm,yshift=0cm]
\clip(3,0,0)--(3,4,0)--(0,4,0)--(0,4,1)--(0,3,1)--(1,3,1)--(1,2,1)--(1,1,1)--(2,1,1)--(2,0,1)--(3,0,1)--(3,0,0)--cycle;
\draw[line width=1pt,fill=black!5]
(2,3,0)--(3,3,0)--(3,4,0)--(2,4,0)--cycle
(1,3,1)--(2,3,1)--(2,4,1)--(1,4,1)--cycle
(1,2,1)--(2,2,1)--(2,3,1)--(1,3,1)--cycle
(1,1,1)--(2,1,1)--(2,2,1)--(1,2,1)--cycle
(0,3,1)--(1,3,1)--(1,4,1)--(0,4,1)--cycle
;
\draw[line width=1pt,fill=red]
(2,0,1)--(3,0,1)--(3,1,1)--(2,1,1)--cycle
(2,1,1)--(3,1,1)--(3,2,1)--(2,2,1)--cycle
;
\draw[line width=1pt,fill=cyan]
(2,2,1)--(3,2,1)--(3,3,1)--(2,3,1)--cycle
;
\draw[line width=1pt,fill=black!20]
(3,2,0)--(3,3,0)--(3,3,1)--(3,2,1)--cycle
(3,1,0)--(3,2,0)--(3,2,1)--(3,1,1)--cycle
(3,0,0)--(3,1,0)--(3,1,1)--(3,0,1)--cycle
(2,3,0)--(2,4,0)--(2,4,1)--(2,3,1)--cycle
;
\draw[line width=1pt,fill=black!50]
(2,3,0)--(2,3,1)--(3,3,1)--(3,3,0)--cycle
(1,4,0)--(1,4,1)--(2,4,1)--(2,4,0)--cycle
(0,4,0)--(0,4,1)--(1,4,1)--(1,4,0)--cycle
;
\draw[line width=2pt](3,0,0)--(3,4,0)--(0,4,0)--(0,4,1)--(0,3,1)--(1,3,1)--(1,2,1)--(1,1,1)--(2,1,1)--(2,0,1)--(3,0,1)--(3,0,0)--cycle;
\end{scope}
\begin{scope}[xshift=18cm,yshift=0cm]
\clip(3,0,0)--(3,4,0)--(0,4,0)--(0,4,1)--(0,3,1)--(1,3,1)--(1,3,2)--(1,2,2)--(1,1,2)--(2,1,2)--(2,0,2)--(3,0,2)--(3,0,0)--cycle;
\draw[line width=1pt,fill=black!5]
(2,3,0)--(3,3,0)--(3,4,0)--(2,4,0)--cycle
(2,2,1)--(3,2,1)--(3,3,1)--(2,3,1)--cycle
(2,1,2)--(3,1,2)--(3,2,2)--(2,2,2)--cycle
(2,0,2)--(3,0,2)--(3,1,2)--(2,1,2)--cycle
(1,3,1)--(2,3,1)--(2,4,1)--(1,4,1)--cycle
(1,2,2)--(2,2,2)--(2,3,2)--(1,3,2)--cycle
(1,1,2)--(2,1,2)--(2,2,2)--(1,2,2)--cycle
(0,3,1)--(1,3,1)--(1,4,1)--(0,4,1)--cycle
;
\draw[line width=1pt,fill=black!20]
(3,2,0)--(3,3,0)--(3,3,1)--(3,2,1)--cycle
(3,1,1)--(3,2,1)--(3,2,2)--(3,1,2)--cycle
(3,1,0)--(3,2,0)--(3,2,1)--(3,1,1)--cycle
(3,0,1)--(3,1,1)--(3,1,2)--(3,0,2)--cycle
(3,0,0)--(3,1,0)--(3,1,1)--(3,0,1)--cycle
(2,3,0)--(2,4,0)--(2,4,1)--(2,3,1)--cycle
(2,2,1)--(2,3,1)--(2,3,2)--(2,2,2)--cycle
;
\draw[line width=1pt,fill=black!50]
(2,3,0)--(2,3,1)--(3,3,1)--(3,3,0)--cycle
(2,2,1)--(2,2,2)--(3,2,2)--(3,2,1)--cycle
(1,4,0)--(1,4,1)--(2,4,1)--(2,4,0)--cycle
(1,3,1)--(1,3,2)--(2,3,2)--(2,3,1)--cycle
(0,4,0)--(0,4,1)--(1,4,1)--(1,4,0)--cycle
;
\draw[line width=2pt](3,0,0)--(3,4,0)--(0,4,0)--(0,4,1)--(0,3,1)--(1,3,1)--(1,3,2)--(1,2,2)--(1,1,2)--(2,1,2)--(2,0,2)--(3,0,2)--(3,0,0)--cycle;
\end{scope}
\end{scope}
\end{tikzpicture}
}

\myfig[b]{}{lex}{
\begin{tikzpicture}[tdplot_main_coords]
\begin{scope}[xshift=0cm,yshift=-2cm]
\draw[line width=2pt,->](2.5,.5,1.5)--(2.5,.5,0);
\begin{scope}[xshift=6cm,yshift=0cm]
\draw[line width=2pt,->](1.5,1.5,1.5)--(1.5,1.5,0);
\end{scope}
\begin{scope}[xshift=12cm,yshift=0cm]
\draw[line width=2pt,->](2.5,.5,1.5)--(2.5,.5,0);
\end{scope}
\begin{scope}[xshift=18cm,yshift=0cm]
\draw[line width=2pt,->](2.5,.5,1.5)--(2.5,.5,0);
\end{scope}
\end{scope}
\begin{scope}[xshift=0cm,yshift=0cm]
\clip(3,0,0)--(3,1,0)--(3,2,0)--(2,2,0)--(2,3,0)--(2,4,0)--(1,4,0)--(0,4,0)--(0,4,1)--(0,3,1)--(1,3,1)--(1,2,1)--(1,1,1)--(2,1,1)--(2,0,1)--(3,0,1)--(3,0,0)--cycle;
\draw[line width=1pt,fill=black!5]
(2,1,1)--(3,1,1)--(3,2,1)--(2,2,1)--cycle
(2,0,1)--(3,0,1)--(3,1,1)--(2,1,1)--cycle
(1,3,1)--(2,3,1)--(2,4,1)--(1,4,1)--cycle
(1,2,1)--(2,2,1)--(2,3,1)--(1,3,1)--cycle
(1,1,1)--(2,1,1)--(2,2,1)--(1,2,1)--cycle
(0,3,1)--(1,3,1)--(1,4,1)--(0,4,1)--cycle
;
\draw[line width=1pt,fill=black!20]
(3,1,0)--(3,2,0)--(3,2,1)--(3,1,1)--cycle
(3,0,0)--(3,1,0)--(3,1,1)--(3,0,1)--cycle
(2,3,0)--(2,4,0)--(2,4,1)--(2,3,1)--cycle
(2,2,0)--(2,3,0)--(2,3,1)--(2,2,1)--cycle
;
\draw[line width=1pt,fill=black!50]
(2,2,0)--(2,2,1)--(3,2,1)--(3,2,0)--cycle
(1,4,0)--(1,4,1)--(2,4,1)--(2,4,0)--cycle
(0,4,0)--(0,4,1)--(1,4,1)--(1,4,0)--cycle
;
\draw[line width=2pt](3,0,0)--(3,1,0)--(3,2,0)--(2,2,0)--(2,3,0)--(2,4,0)--(1,4,0)--(0,4,0)--(0,4,1)--(0,3,1)--(1,3,1)--(1,2,1)--(1,1,1)--(2,1,1)--(2,0,1)--(3,0,1)--(3,0,0)--cycle;
\end{scope}
\begin{scope}[xshift=6cm,yshift=0cm]
\clip(2,1,0)--(2,2,0)--(2,3,0)--(1,3,0)--(1,3,1)--(1,2,1)--(1,1,1)--(2,1,1)--cycle;
\draw[line width=1pt,fill=black!5]
(1,2,1)--(2,2,1)--(2,3,1)--(1,3,1)--cycle
(1,1,1)--(2,1,1)--(2,2,1)--(1,2,1)--cycle
;
\draw[line width=1pt,fill=black!20]
(2,2,0)--(2,3,0)--(2,3,1)--(2,2,1)--cycle
(2,1,0)--(2,2,0)--(2,2,1)--(2,1,1)--cycle
;
\draw[line width=1pt,fill=black!50]
(1,3,0)--(1,3,1)--(2,3,1)--(2,3,0)--cycle
;
\draw[line width=2pt](2,1,0)--(2,2,0)--(2,3,0)--(1,3,0)--(1,3,1)--(1,2,1)--(1,1,1)--(2,1,1)--cycle;
\end{scope}
\begin{scope}[xshift=12cm,yshift=0cm]
\clip(3,0,0)--(3,1,0)--(3,2,0)--(2,2,0)--(1,2,0)--(1,2,1)--(1,1,1)--(2,1,1)--(2,0,1)--(3,0,1)--(3,0,0)--cycle;
\draw[line width=1pt,fill=green]
(1,1,1)--(2,1,1)--(2,2,1)--(1,2,1)--cycle
;
\draw[line width=1pt,fill=yellow]
(2,1,1)--(3,1,1)--(3,2,1)--(2,2,1)--cycle
(2,0,1)--(3,0,1)--(3,1,1)--(2,1,1)--cycle
;
\draw[line width=1pt,fill=black!20]
(3,1,0)--(3,2,0)--(3,2,1)--(3,1,1)--cycle
(3,0,0)--(3,1,0)--(3,1,1)--(3,0,1)--cycle
;
\draw[line width=1pt,fill=black!50]
(2,2,0)--(2,2,1)--(3,2,1)--(3,2,0)--cycle
(1,2,0)--(1,2,1)--(2,2,1)--(2,2,0)--cycle
;
\draw[line width=2pt](3,0,0)--(3,1,0)--(3,2,0)--(2,2,0)--(1,2,0)--(1,2,1)--(1,1,1)--(2,1,1)--(2,0,1)--(3,0,1)--(3,0,0)--cycle;
\end{scope}
\begin{scope}[xshift=18cm,yshift=0cm]
\clip(3,0,0)--(3,1,0)--(2,1,0)--(2,1,1)--(2,0,1)--(3,0,1)--cycle;
\draw[line width=1pt,fill=black!5]
(2,0,1)--(3,0,1)--(3,1,1)--(2,1,1)--cycle
;
\draw[line width=1pt,fill=black!20]
(3,0,0)--(3,1,0)--(3,1,1)--(3,0,1)--cycle
;
\draw[line width=1pt,fill=black!50]
(2,1,0)--(2,1,1)--(3,1,1)--(3,1,0)--cycle
;
\draw[line width=2pt](3,0,0)--(3,1,0)--(2,1,0)--(2,1,1)--(2,0,1)--(3,0,1)--cycle;
\end{scope}
\begin{scope}[xshift=0cm,yshift=-44mm]
\begin{scope}[xshift=0cm,yshift=0cm]
\clip(3,0,0)--(3,4,0)--(0,4,0)--(0,3,0)--(1,3,0)--(1,2,0)--(1,1,0)--(2,1,0)--(2,0,0)--(3,0,0)--cycle;
\draw[line width=1pt,fill=black!5]
(2,3,0)--(3,3,0)--(3,4,0)--(2,4,0)--cycle
(2,2,0)--(3,2,0)--(3,3,0)--(2,3,0)--cycle
(2,1,0)--(3,1,0)--(3,2,0)--(2,2,0)--cycle
(2,0,0)--(3,0,0)--(3,1,0)--(2,1,0)--cycle
(1,3,0)--(2,3,0)--(2,4,0)--(1,4,0)--cycle
(1,2,0)--(2,2,0)--(2,3,0)--(1,3,0)--cycle
(1,1,0)--(2,1,0)--(2,2,0)--(1,2,0)--cycle
(0,3,0)--(1,3,0)--(1,4,0)--(0,4,0)--cycle
;
\draw[line width=1pt,fill=black!20]
;
\draw[line width=1pt,fill=black!50]
;
\draw[line width=2pt](3,0,0)--(3,4,0)--(0,4,0)--(0,3,0)--(1,3,0)--(1,2,0)--(1,1,0)--(2,1,0)--(2,0,0)--(3,0,0)--cycle;
\end{scope}
\begin{scope}[xshift=6cm,yshift=0cm]
\clip(3,0,0)--(3,4,0)--(0,4,0)--(0,4,1)--(0,3,1)--(1,3,1)--(1,2,1)--(1,1,1)--(2,1,1)--(2,0,1)--(3,0,1)--(3,0,0)--cycle;
\draw[line width=1pt,fill=black!5]
(2,3,0)--(3,3,0)--(3,4,0)--(2,4,0)--cycle
(2,2,0)--(3,2,0)--(3,3,0)--(2,3,0)--cycle
(2,1,1)--(3,1,1)--(3,2,1)--(2,2,1)--cycle
(2,0,1)--(3,0,1)--(3,1,1)--(2,1,1)--cycle
(1,3,1)--(2,3,1)--(2,4,1)--(1,4,1)--cycle
(1,2,1)--(2,2,1)--(2,3,1)--(1,3,1)--cycle
(1,1,1)--(2,1,1)--(2,2,1)--(1,2,1)--cycle
(0,3,1)--(1,3,1)--(1,4,1)--(0,4,1)--cycle
;
\draw[line width=1pt,fill=black!20]
(3,1,0)--(3,2,0)--(3,2,1)--(3,1,1)--cycle
(3,0,0)--(3,1,0)--(3,1,1)--(3,0,1)--cycle
(2,3,0)--(2,4,0)--(2,4,1)--(2,3,1)--cycle
(2,2,0)--(2,3,0)--(2,3,1)--(2,2,1)--cycle
;
\draw[line width=1pt,fill=black!50]
(2,2,0)--(2,2,1)--(3,2,1)--(3,2,0)--cycle
(1,4,0)--(1,4,1)--(2,4,1)--(2,4,0)--cycle
(0,4,0)--(0,4,1)--(1,4,1)--(1,4,0)--cycle
;
\draw[line width=2pt](3,0,0)--(3,4,0)--(0,4,0)--(0,4,1)--(0,3,1)--(1,3,1)--(1,2,1)--(1,1,1)--(2,1,1)--(2,0,1)--(3,0,1)--(3,0,0)--cycle;
\end{scope}
\begin{scope}[xshift=12cm,yshift=0cm]
\clip(3,0,0)--(3,4,0)--(0,4,0)--(0,4,1)--(0,3,1)--(1,3,1)--(1,3,2)--(1,2,2)--(1,1,2)--(2,1,2)--(2,1,1)--(2,0,1)--(3,0,1)--(3,0,0)--cycle;
\draw[line width=1pt,fill=black!5]
(2,3,0)--(3,3,0)--(3,4,0)--(2,4,0)--cycle
(2,2,0)--(3,2,0)--(3,3,0)--(2,3,0)--cycle
(2,1,1)--(3,1,1)--(3,2,1)--(2,2,1)--cycle
(2,0,1)--(3,0,1)--(3,1,1)--(2,1,1)--cycle
(1,3,1)--(2,3,1)--(2,4,1)--(1,4,1)--cycle
(1,2,2)--(2,2,2)--(2,3,2)--(1,3,2)--cycle
(1,1,2)--(2,1,2)--(2,2,2)--(1,2,2)--cycle
(0,3,1)--(1,3,1)--(1,4,1)--(0,4,1)--cycle
;
\draw[line width=1pt,fill=black!20]
(3,1,0)--(3,2,0)--(3,2,1)--(3,1,1)--cycle
(3,0,0)--(3,1,0)--(3,1,1)--(3,0,1)--cycle
(2,3,0)--(2,4,0)--(2,4,1)--(2,3,1)--cycle
(2,2,1)--(2,3,1)--(2,3,2)--(2,2,2)--cycle
(2,2,0)--(2,3,0)--(2,3,1)--(2,2,1)--cycle
(2,1,1)--(2,2,1)--(2,2,2)--(2,1,2)--cycle
;
\draw[line width=1pt,fill=black!50]
(2,2,0)--(2,2,1)--(3,2,1)--(3,2,0)--cycle
(1,4,0)--(1,4,1)--(2,4,1)--(2,4,0)--cycle
(1,3,1)--(1,3,2)--(2,3,2)--(2,3,1)--cycle
(0,4,0)--(0,4,1)--(1,4,1)--(1,4,0)--cycle
;
\draw[line width=2pt](3,0,0)--(3,4,0)--(0,4,0)--(0,4,1)--(0,3,1)--(1,3,1)--(1,3,2)--(1,2,2)--(1,1,2)--(2,1,2)--(2,1,1)--(2,0,1)--(3,0,1)--(3,0,0)--cycle;
\end{scope}
\begin{scope}[xshift=18cm,yshift=0cm]
\clip(3,0,0)--(3,4,0)--(0,4,0)--(0,4,1)--(0,3,1)--(1,3,1)--(1,3,2)--(1,2,2)--(1,1,2)--(2,1,2)--(2,0,2)--(3,0,2)--(3,0,0)--cycle;
\draw[line width=1pt,fill=black!5]
(2,3,0)--(3,3,0)--(3,4,0)--(2,4,0)--cycle
(1,3,1)--(2,3,1)--(2,4,1)--(1,4,1)--cycle
(1,2,2)--(2,2,2)--(2,3,2)--(1,3,2)--cycle
(1,1,2)--(2,1,2)--(2,2,2)--(1,2,2)--cycle
(0,3,1)--(1,3,1)--(1,4,1)--(0,4,1)--cycle
;
\draw[line width=1pt,fill=yellow]
(2,0,2)--(3,0,2)--(3,1,2)--(2,1,2)--cycle
(2,1,2)--(3,1,2)--(3,2,2)--(2,2,2)--cycle
;
\draw[line width=1pt,fill=green]
(2,2,1)--(3,2,1)--(3,3,1)--(2,3,1)--cycle
;
\draw[line width=1pt,fill=black!20]
(3,2,0)--(3,3,0)--(3,3,1)--(3,2,1)--cycle
(3,1,1)--(3,2,1)--(3,2,2)--(3,1,2)--cycle
(3,1,0)--(3,2,0)--(3,2,1)--(3,1,1)--cycle
(3,0,1)--(3,1,1)--(3,1,2)--(3,0,2)--cycle
(3,0,0)--(3,1,0)--(3,1,1)--(3,0,1)--cycle
(2,3,0)--(2,4,0)--(2,4,1)--(2,3,1)--cycle
(2,2,1)--(2,3,1)--(2,3,2)--(2,2,2)--cycle
;
\draw[line width=1pt,fill=black!50]
(2,3,0)--(2,3,1)--(3,3,1)--(3,3,0)--cycle
(2,2,1)--(2,2,2)--(3,2,2)--(3,2,1)--cycle
(1,4,0)--(1,4,1)--(2,4,1)--(2,4,0)--cycle
(1,3,1)--(1,3,2)--(2,3,2)--(2,3,1)--cycle
(0,4,0)--(0,4,1)--(1,4,1)--(1,4,0)--cycle
;
\draw[line width=2pt](3,0,0)--(3,4,0)--(0,4,0)--(0,4,1)--(0,3,1)--(1,3,1)--(1,3,2)--(1,2,2)--(1,1,2)--(2,1,2)--(2,0,2)--(3,0,2)--(3,0,0)--cycle;
\end{scope}
\end{scope}
\end{tikzpicture}
}

\smallskip
Moreover we remark that the lexicographic factorisation of a given reverse plane partition $\pi$ can be obtained inductively by finding the minimal candidate $v\in\cand(\pi)$ and constructing the path $Q(v,\pi)$ along which $\pi$ is reduced.
For example, the lexicographic factorisation from \reff{lex} is obtained in \reff{inverse}.
At each step the candidates are circled.
Moreover, the minimal candidate $v$ and the path $Q(v,\pi)$ are shaded.
The tableaux in the second row record the multi-set of rim-hooks that have been extracted.

\myfig[t]{}{inverse}{
\begin{tikzpicture}
\begin{scope}[yshift=4cm]
\begin{scope}[xshift=0cm]
\fill[black!30]
(3,2)--(4,2)--(4,3)--(3,3);
% \fill[yellow](1,2)--(2,2)--(2,3)--(1,3)(1,1)--(2,1)--(2,2)--(1,2)(0,0)--(1,0)--(1,1)--(0,1);
\draw
(0,1)--(1,1)
(0,2)--(3,2)
(1,1)--(1,3)
(2,1)--(2,3)
(3,2)--(3,3)
;
\draw[line width=1pt](0,0)--(1,0)--(1,1)--(3,1)--(3,2)--(4,2)--(4,3)--(0,3)--cycle;
\draw[xshift=5mm,yshift=5mm]
(0,2)node{\small{$0$}}
(1,2)node[mybead]{\small{$1$}}
(2,2)node{\small{$2$}}
(3,2)node[mybead]{\small{$3$}}
(0,1)node{\small{$1$}}
(1,1)node[mybead]{\small{$2$}}
(2,1)node{\small{$2$}}
(0,0)node[mybead]{\small{$1$}};
\end{scope}
\begin{scope}[xshift=5.5cm]
\fill[black!30]
(1,2)--(4,2)--(4,3)--(1,3)--cycle
;
% \fill[yellow](1,1)--(2,1)--(2,2)--(1,2)(0,0)--(1,0)--(1,1)--(0,1);
\draw
(0,1)--(1,1)(0,2)--(3,2)(1,1)--(1,3)(2,1)--(2,3)(3,2)--(3,3)
;
% \draw[xshift=5mm,yshift=5mm,line width=2pt,orange](1,2)--(3.5,2);
\draw[line width=1pt](0,0)--(1,0)--(1,1)--(3,1)--(3,2)--(4,2)--(4,3)--(0,3)--cycle;
\draw[xshift=5mm,yshift=5mm]
(0,2)node{\small{$0$}}
(1,2)node[mybead]{\small{$1$}}
(2,2)node{\small{$2$}}
(3,2)node{\small{$2$}}
(0,1)node{\small{$1$}}
(1,1)node[mybead]{\small{$2$}}
(2,1)node{\small{$2$}}
(0,0)node[mybead]{\small{$1$}};
\end{scope}
\begin{scope}[xshift=11cm]
\fill[black!30]
(1,1)--(3,1)--(3,2)--(1,2)--cycle
;
% \fill[yellow](0,0)--(1,0)--(1,1)--(0,1);
\draw(0,1)--(1,1)(0,2)--(3,2)(1,1)--(1,3)(2,1)--(2,3)(3,2)--(3,3)
;
% \draw[xshift=5mm,yshift=5mm,line width=2pt,orange](1,1)--(2.5,1);
\draw[line width=1pt](0,0)--(1,0)--(1,1)--(3,1)--(3,2)--(4,2)--(4,3)--(0,3)--cycle;
\draw[xshift=5mm,yshift=5mm]
(0,2)node{\small{$0$}}
(1,2)node{\small{$0$}}
(2,2)node{\small{$1$}}
(3,2)node{\small{$1$}}
(0,1)node{\small{$1$}}
(1,1)node[mybead]{\small{$2$}}
(2,1)node{\small{$2$}}
(0,0)node[mybead]{\small{$1$}};
\end{scope}
\begin{scope}[xshift=16.5cm]
\fill[black!30]
(0,0)--(1,0)--(1,1)--(3,1)--(3,2)--(4,2)--(4,3)--(2,3)--(2,2)--(0,2)--cycle
;
\draw
(0,1)--(1,1)(0,2)--(3,2)(1,1)--(1,3)(2,1)--(2,3)(3,2)--(3,3)
;
% \draw[xshift=5mm,yshift=5mm,line width=2pt,orange](0,0)--(0,1)--(2,1)--(2,2)--(3.5,2);
\draw[line width=1pt](0,0)--(1,0)--(1,1)--(3,1)--(3,2)--(4,2)--(4,3)--(0,3)--cycle;
\draw[xshift=5mm,yshift=5mm]
(0,2)node{\small{$0$}}
(1,2)node{\small{$0$}}
(2,2)node{\small{$1$}}
(3,2)node{\small{$1$}}
(0,1)node{\small{$1$}}
(1,1)node{\small{$1$}}
(2,1)node{\small{$1$}}
(0,0)node[mybead]{\small{$1$}};
\end{scope}
\begin{scope}[xshift=22cm]
%\fill[yellow]
%(3,2)--(4,2)--(4,3)--(3,3)
%(1,2)--(2,2)--(2,3)--(1,3)
%(1,1)--(2,1)--(2,2)--(1,2)
%(0,0)--(1,0)--(1,1)--(0,1);
\draw
(0,1)--(1,1)
(0,2)--(3,2)
(1,1)--(1,3)
(2,1)--(2,3)
(3,2)--(3,3)
;
\draw[line width=1pt](0,0)--(1,0)--(1,1)--(3,1)--(3,2)--(4,2)--(4,3)--(0,3)--cycle;
\draw[xshift=5mm,yshift=5mm]
(0,2)node{\small{$0$}}
(1,2)node{\small{$0$}}
(2,2)node{\small{$0$}}
(3,2)node{\small{$0$}}
(0,1)node{\small{$0$}}
(1,1)node{\small{$0$}}
(2,1)node{\small{$0$}}
(0,0)node{\small{$0$}};
\end{scope}
\end{scope}
\begin{scope}[yshift=0cm]
\begin{scope}[xshift=0cm]
\draw(0,1)--(1,1)(0,2)--(3,2)(1,1)--(1,3)(2,1)--(2,3)(3,2)--(3,3)
;
\draw[line width=1pt](0,0)--(1,0)--(1,1)--(3,1)--(3,2)--(4,2)--(4,3)--(0,3)--cycle
;
\draw[xshift=5mm,yshift=5mm]
(0,2)node{\small{$0$}}
(1,2)node{\small{$0$}}
(2,2)node{\small{$0$}}
(3,2)node{\small{$0$}}
(0,1)node{\small{$0$}}
(1,1)node{\small{$0$}}
(2,1)node{\small{$0$}}
(0,0)node{\small{$0$}};
\end{scope}
\begin{scope}[xshift=5.5cm]
\draw
(0,1)--(1,1)
(0,2)--(3,2)
(1,1)--(1,3)
(2,1)--(2,3)
(3,2)--(3,3)
;
\draw[line width=1pt](0,0)--(1,0)--(1,1)--(3,1)--(3,2)--(4,2)--(4,3)--(0,3)--cycle;
\draw[xshift=5mm,yshift=5mm]
(0,2)node{\small{$0$}}
(1,2)node{\small{$0$}}
(2,2)node{\small{$0$}}
(3,2)node{\small{$1$}}
(0,1)node{\small{$0$}}
(1,1)node{\small{$0$}}
(2,1)node{\small{$0$}}
(0,0)node{\small{$0$}}
;
\end{scope}
\begin{scope}[xshift=11cm]
\draw
(0,1)--(1,1)
(0,2)--(3,2)
(1,1)--(1,3)
(2,1)--(2,3)
(3,2)--(3,3)
;
\draw[line width=1pt](0,0)--(1,0)--(1,1)--(3,1)--(3,2)--(4,2)--(4,3)--(0,3)--cycle;
\draw[xshift=5mm,yshift=5mm]
(0,2)node{\small{$0$}}
(1,2)node{\small{$0$}}
(2,2)node{\small{$1$}}
(3,2)node{\small{$1$}}
(0,1)node{\small{$0$}}
(1,1)node{\small{$0$}}
(2,1)node{\small{$0$}}
(0,0)node{\small{$0$}}
;
\end{scope}
\begin{scope}[xshift=16.5cm]
\draw
(0,1)--(1,1)
(0,2)--(3,2)
(1,1)--(1,3)
(2,1)--(2,3)
(3,2)--(3,3)
;
\draw[line width=1pt](0,0)--(1,0)--(1,1)--(3,1)--(3,2)--(4,2)--(4,3)--(0,3)--cycle ;
\draw[xshift=5mm,yshift=5mm]
(0,2)node{\small{$0$}}
(1,2)node{\small{$0$}}
(2,2)node{\small{$1$}}
(3,2)node{\small{$1$}}
(0,1)node{\small{$0$}}
(1,1)node{\small{$1$}}
(2,1)node{\small{$0$}}
(0,0)node{\small{$0$}}
;
\end{scope}
\begin{scope}[xshift=22cm]
\draw
(0,1)--(1,1)
(0,2)--(3,2)
(1,1)--(1,3)
(2,1)--(2,3)
(3,2)--(3,3)
;
\draw[line width=1pt](0,0)--(1,0)--(1,1)--(3,1)--(3,2)--(4,2)--(4,3)--(0,3)--cycle
;
\draw[xshift=5mm,yshift=5mm]
(0,2)node{\small{$1$}}
(1,2)node{\small{$0$}}
(2,2)node{\small{$1$}}
(3,2)node{\small{$1$}}
(0,1)node{\small{$0$}}
(1,1)node{\small{$1$}}
(2,1)node{\small{$0$}}
(0,0)node{\small{$0$}}
;
\end{scope}
\end{scope}
\end{tikzpicture}
}

\smallskip
Finally we demonstrate that the bijection from \reft{bij} can be used to prove the generating function identity from \reft{Gansner}.

\begin{proof}[Proof of \reft{Gansner}]
For each rim-hook $h$ the set of contents $\{c(u):u\in h\}$ is an interval $\{j-\lambda_j',\dots,\lambda_i-i\}$ where $(i,j)$ is the cell corresponding to $h$.
Moreover if $P$ is a north-east-path in $\lambda$ with $\omega(P)=\omega(h)$ and $\ell(P)=\ell(h)$ then $\{c(u):u\in P\}=\{c(u):u\in h\}$.
Hence if $h$ inserts into a reverse plane partition $\pi$ then
\begin{eq*}
\prod_{k\in\Z}q_k^{\tr_k(h*\pi)}
=\prod_{k\in\Z}q_k^{\tr_k(\pi)}\cdot\prod_{k=j-\lambda_j'}^{\lambda_i-i}q_k\,.
\end{eq*}
The claim therefore follows from \reft{bij}.
\end{proof}

\section{Connections to known bijections}\label{Section:connections}

In this section we try to shed some light on how the bijection of \reft{bij} is related to previous work.

\smallskip
Let $\tab_{\lambda}=\{t:\lambda\to\N\}$.
The elements $t\in\tab_{\lambda}$ are sometimes called \emph{tableaux} of shape $\lambda$.
Note that $\tab_\lambda$ is in bijection with the multi-sets of rim-hooks of $\lambda$.
To obtain the multi-set corresponding to the function $t$ simply take $t(u)$ copies of the rim-hook $h^u$ for each $u\in\lambda$.

We view the bijection of \reft{bij} as a map from tableaux to reverse plane partitions of the same shape, and denote it by $\Phi:\tab_{\lambda}\to\rpp_{\lambda}$.

\smallskip
I.~Pak~\cite[Sec.~4 and~5]{Pak:hook_length_formula_geometric} describes a bijection between reverse plane partitions and tableaux of the same shape that is obtained inductively as the concatenation of piecewise linear maps between certain polytopes.
He uses this map to derive a proof of the hook-length formula and notes further connections to the Robinson--Schensted--Knuth correspondence.
Similar observations are made in \cite{Hopkins:rsk}.
It turns out that this map coincides with the bijection obtained in \refs{bijection}.
% The author is grateful to S.~Hopkins and H.~Thomas for pointing out this connection.

\smallskip
We now present the bijection defined in~\cite{Pak:hook_length_formula_geometric}.
Let $\lambda$ be a partition and $x\in\lambda$ an outer corner.
Set $\mu=\lambda-\{x\}$.
Define a map $\zeta_{\lambda,x}:\rpp_{\lambda}\to\rpp_{\mu}$ by
\begin{eq*}
\zeta_{\lambda,x}(\pi)(u)=
\begin{cases}
\max\{\pi(\n u),\pi(\w u)\}+\min\{\pi(\e u),\pi(\s u)\}-\pi(u)&\quad\text{if }c(u)=c(x),\\
\pi(u)&\quad\text{otherwise}.
\end{cases}
\end{eq*}
It is easy to see that $\zeta_{\lambda,x}(\pi)$ is indeed a reverse plane partition.
% Moreover note that $\zeta_{\lambda,x}$ has a right inverse.
% Define $\theta_{\lambda,x}:\rpp_{\mu}\to\rpp_{\lambda}$ by
% \begin{eq*}
% \theta_{\lambda,x}(\pi)(u)=
% \begin{cases}
% \max\{\pi(\n x),\pi(\w x)\}&\quad\text{if }u=x,\\
% \max\{\pi(\n u),\pi(\w u)\}+\min\{\pi(\e u),\pi(\s u)\}-\pi(u)&\quad\text{if }c(u)=c(x)\text{ but }u\neq x,\\
% \pi(u)&\quad\text{otherwise}.
% \end{cases}
% \end{eq*}
% Then $\zeta_{\lambda,x}\circ\theta_{\lambda,x}$ is the identity on $\rpp_{\mu}$.

\smallskip
Furthermore define a map $\xi_{\lambda}:\rpp_{\lambda}\to\tab_{\lambda}$ inductively.
Assume that $\xi_{\mu}$ is already defined and set
\begin{eq*}
\xi_{\lambda,x}(\pi)(u)=
\begin{cases}
\pi(x)-\max\{\pi(\n u),\pi(\w u)\}&\quad\text{if }u=x,\\
\xi_{\mu}\circ\zeta_{\lambda,x}(\pi)(u)&\quad\text{otherwise.}
\end{cases}
\end{eq*}
One can show that $\xi_{\lambda,x}$ is independent of the choice of the outer corner $x$~\cite[Thm.~4]{Pak:hook_length_formula_geometric}.
Thus set $\xi_{\lambda}=\xi_{\lambda,x}$.
See \reff{xi} for an example.

\myfig[b]{}{xi}{
\begin{tikzpicture}
\begin{scope}[xshift=0cm]
\draw(0,0)grid(3,3);
\draw[line width=1pt](0,0)rectangle(3,3);
\draw[xshift=5mm,yshift=5mm]
(0,2)node{\small{$1$}}
(1,2)node{\small{$1$}}
(2,2)node{\small{$4$}}
(0,1)node{\small{$2$}}
(1,1)node{\small{$3$}}
(2,1)node{\small{$4$}}
(0,0)node{\small{$4$}}
(1,0)node{\small{$4$}}
(2,0)node{\small{$4$}}
;
\draw[->,line width=1pt,xshift=35mm,yshift=12mm](0,0)--node[anchor=south]{\small{$\zeta_{(3,3)}$}}(1,0)
;
\draw[->,line width=1pt,xshift=12mm,yshift=-15mm](0,1)--node[anchor=west]{\small{$\xi$}}(0,0)
;
\end{scope}
\begin{scope}[xshift=5cm]
\draw(0,0)grid(3,3);
\draw[line width=1pt](0,0)rectangle(3,3)(2,0)--(2,1)--(3,1);
\draw[xshift=5mm,yshift=5mm]
(0,2)node{\small{$0$}}
(1,2)node{\small{$1$}}
(2,2)node{\small{$4$}}
(0,1)node{\small{$2$}}
(1,1)node{\small{$3$}}
(2,1)node{\small{$4$}}
(0,0)node{\small{$4$}}
(1,0)node{\small{$4$}}
(2,0)node{\small{$0$}}
;
\draw[->,line width=1pt,xshift=35mm,yshift=12mm](0,0)--node[anchor=south]{\small{$\zeta_{(2,3)}$}}(1,0)
;
\end{scope}
\begin{scope}[xshift=10cm]
\draw(0,0)grid(3,3);
\draw[line width=1pt](0,0)rectangle(3,3)(2,0)--(2,2)--(3,2);
\draw[xshift=5mm,yshift=5mm]
(0,2)node{\small{$0$}}
(1,2)node{\small{$2$}}
(2,2)node{\small{$4$}}
(0,1)node{\small{$2$}}
(1,1)node{\small{$3$}}
(2,1)node{\small{$0$}}
(0,0)node{\small{$4$}}
(1,0)node{\small{$4$}}
(2,0)node{\small{$0$}}
;
\draw[->,line width=1pt,xshift=35mm,yshift=12mm](0,0)--node[anchor=south]{\small{$\zeta_{(1,3)}$}}(1,0)
;
\end{scope}
\begin{scope}[xshift=15cm]
\draw(0,0)grid(3,3);
\draw[line width=1pt](0,0)rectangle(3,3)(2,0)--(2,3);
\draw[xshift=5mm,yshift=5mm]
(0,2)node{\small{$0$}}
(1,2)node{\small{$2$}}
(2,2)node{\small{$2$}}
(0,1)node{\small{$2$}}
(1,1)node{\small{$3$}}
(2,1)node{\small{$0$}}
(0,0)node{\small{$4$}}
(1,0)node{\small{$4$}}
(2,0)node{\small{$0$}}
;
\draw[->,line width=1pt,xshift=35mm,yshift=12mm](0,0)--node[anchor=south]{\small{$\zeta_{(3,2)}$}}(1,0)
;
\end{scope}
\begin{scope}[xshift=20cm]
\draw(0,0)grid(3,3);
\draw[line width=1pt](0,0)rectangle(3,3)(1,0)--(1,1)--(2,1)--(2,3);
\draw[xshift=5mm,yshift=5mm]
(0,2)node{\small{$0$}}
(1,2)node{\small{$2$}}
(2,2)node{\small{$2$}}
(0,1)node{\small{$1$}}
(1,1)node{\small{$3$}}
(2,1)node{\small{$0$}}
(0,0)node{\small{$4$}}
(1,0)node{\small{$0$}}
(2,0)node{\small{$0$}}
;
\draw[->,line width=1pt,xshift=18mm,yshift=-15mm](0,1)--node[anchor=west]{\small{$\zeta_{(2,2)}$}}(0,0)
;
\end{scope}
\begin{scope}[yshift=-5cm]
\begin{scope}[xshift=20cm]
\draw(0,0)grid(3,3);
\draw[line width=1pt](0,0)rectangle(3,3)(1,0)--(1,2)--(2,2)--(2,3);
\draw[xshift=5mm,yshift=5mm]
(0,2)node{\small{$1$}}
(1,2)node{\small{$2$}}
(2,2)node{\small{$2$}}
(0,1)node{\small{$1$}}
(1,1)node{\small{$1$}}
(2,1)node{\small{$0$}}
(0,0)node{\small{$4$}}
(1,0)node{\small{$0$}}
(2,0)node{\small{$0$}}
;
\end{scope}
\begin{scope}[xshift=15cm]
\draw(0,0)grid(3,3);
\draw[line width=1pt](0,0)rectangle(3,3)(1,0)--(1,3);
\draw[xshift=5mm,yshift=5mm]
(0,2)node{\small{$1$}}
(1,2)node{\small{$1$}}
(2,2)node{\small{$2$}}
(0,1)node{\small{$1$}}
(1,1)node{\small{$1$}}
(2,1)node{\small{$0$}}
(0,0)node{\small{$4$}}
(1,0)node{\small{$0$}}
(2,0)node{\small{$0$}}
;
\draw[<-,line width=1pt,xshift=35mm,yshift=12mm](0,0)--node[anchor=south]{\small{$\zeta_{(1,2)}$}}(1,0)
;
\end{scope}
\begin{scope}[xshift=10cm]
\draw(0,0)grid(3,3);
\draw[line width=1pt](0,0)rectangle(3,3)(0,1)--(1,1)--(1,3);
\draw[xshift=5mm,yshift=5mm]
(0,2)node{\small{$1$}}
(1,2)node{\small{$1$}}
(2,2)node{\small{$2$}}
(0,1)node{\small{$1$}}
(1,1)node{\small{$1$}}
(2,1)node{\small{$0$}}
(0,0)node{\small{$3$}}
(1,0)node{\small{$0$}}
(2,0)node{\small{$0$}}
;
\draw[<-,line width=1pt,xshift=35mm,yshift=12mm](0,0)--node[anchor=south]{\small{$\zeta_{(3,1)}$}}(1,0)
;
\end{scope}
\begin{scope}[xshift=5cm]
\draw(0,0)grid(3,3);
\draw[line width=1pt](0,0)rectangle(3,3)(0,2)--(1,2)--(1,3);
\draw[xshift=5mm,yshift=5mm]
(0,2)node{\small{$1$}}
(1,2)node{\small{$1$}}
(2,2)node{\small{$2$}}
(0,1)node{\small{$0$}}
(1,1)node{\small{$1$}}
(2,1)node{\small{$0$}}
(0,0)node{\small{$3$}}
(1,0)node{\small{$0$}}
(2,0)node{\small{$0$}}
;
\draw[<-,line width=1pt,xshift=35mm,yshift=12mm](0,0)--node[anchor=south]{\small{$\zeta_{(2,1)}$}}(1,0)
;
\end{scope}
\begin{scope}[xshift=0cm]
\draw(0,0)grid(3,3);
\draw[line width=1pt](0,0)rectangle(3,3);
\draw[xshift=5mm,yshift=5mm]
(0,2)node{\small{$1$}}
(1,2)node{\small{$1$}}
(2,2)node{\small{$2$}}
(0,1)node{\small{$0$}}
(1,1)node{\small{$1$}}
(2,1)node{\small{$0$}}
(0,0)node{\small{$3$}}
(1,0)node{\small{$0$}}
(2,0)node{\small{$0$}}
;
\draw[<-,line width=1pt,xshift=35mm,yshift=12mm](0,0)--node[anchor=south]{\small{$\zeta_{(1,1)}$}}(1,0)
;
\end{scope}
\end{scope}
\end{tikzpicture}
}

\smallskip
Given an outer corner $x\in\lambda$ define
\begin{eq*}
\rpp_{\lambda,x}=\big\{\pi\in\rpp_{\lambda}:\pi(x)=\max\{\pi(\n x),\pi(\w x)\}\big\}.
\end{eq*}
Note that $\pi\in\rpp_{\lambda,x}$ if and only if $\xi_{\lambda}(\pi)(x)=0$.

We need to consider the four regions $\I,\O,\A$ and $\B$ defined in \refs{rimhooks} for multiple partitions at the same time.
To distinguish properly denote the corresponding subsets of $\lambda$ and $\mu$ by $\I_{\lambda}$ and $\I_{\mu}$ respectively, and use analogous notation for the other regions.

\smallskip
We first establish how the map $\zeta_{\lambda,x}$ affects candidates and the paths $Q(v,\pi)$ defined in \refs{factors}.
The following two lemmas are slightly technical, but not very difficult to prove.

\begin{lem}{pakcand}
Let $\lambda$ be a partition, $x\in\lambda$ an outer corner, $\mu=\lambda-\{x\}$, $\pi\in\rpp_{\lambda,x}$, $\rho=\zeta_{\lambda,x}(\pi)$, and $v\in\lambda$ be a cell.
Regard $\cand(\pi)$ and $\cand(\rho)$ as ordered sets with respect to the content order.
\begin{enumerate}[(i)]
\myi{candiff}
If $c(v)\notin\{c(x)-1,c(x),c(x)+1\}$ then $v\in\cand(\pi)$ if and only if $v\in\cand(\rho)$.
\myi{cx}
Let $c(v)=c(x)$ and $v$ be the minimum of $\cand(\pi)$.
Then $\pi(v)=\pi(\n v)$ implies that $\n v\in\cand(\rho)$ and $\e v\notin\cand(\rho)$.
On the other hand $\pi(v)>\pi(\n v)$ implies $\e v\in\cand(\rho)$.
\myi{cx+1}
Let $c(v)=c(x)+1$ and $v$ be the minimum of $\cand(\rho)$.
Then $\rho(v)\leq\rho(\s\w v)$ implies $\w v\in\cand(\pi)$ and $\s v\notin\cand(\pi)$.
On the other hand $\rho(v)>\rho(\s\w v)$ implies $\s v\in\cand(\pi)$.
\myi{cx-1}
Let $c(v)=c(x)-1$.
Then $v\in\cand(\pi)$ implies $v\in\cand(\rho)$, and $v=\min\cand(\rho)$ implies $v\in\cand(\pi)$.
% \myi{miniff}
% If $c(v)\notin\{c(x),c(x)+1\}$ then $v=\min\cand(\pi)$ if and only if $v=\min\cand(\rho)$.
\end{enumerate}
\end{lem}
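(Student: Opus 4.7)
The plan rests on two elementary structural observations. First, since $x$ is an outer corner of $\lambda$, removing it changes the list of outer and inner corner contents only at contents in $\{c(x)-1,c(x),c(x)+1\}$; consequently, for any $v\in\mu$ with $c(v)$ outside this window, $v$ belongs to the same region among $\O,\A,\I,\B$ in $\lambda$ and in $\mu$. Second, the formula for $\zeta_{\lambda,x}$ gives $\rho(u)=\pi(u)$ whenever $c(u)\neq c(x)$, while on the diagonal $c(u)=c(x)$ one has the identity $\rho(u)+\pi(u)=\max\{\pi(\n u),\pi(\w u)\}+\min\{\pi(\e u),\pi(\s u)\}$, together with the usual reverse-plane-partition bounds $\max\leq\pi(u)\leq\min$ controlling the sign of $\rho(u)-\pi(u)$.

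Claim \refi{candiff} is then immediate: the candidacy condition for $v$ depends only on the region of $v$ and the values at $v,\n v,\w v$, and these data coincide for $\pi$ on $\lambda$ and for $\rho$ on $\mu$ under the hypothesis $c(v)\notin\{c(x)-1,c(x),c(x)+1\}$. For \refi{cx}, I would exploit that $x$ is the southernmost cell of content $c(x)$ in $\lambda$, so the minimality of $v\in\cand(\pi)$ in $\tleq$ forces either $v=x$ or $v$ strictly northwest of $x$. In the first sub-case $\pi\in\rpp_{\lambda,x}$ together with $\pi(x)>\pi(\w x)$ automatically yields $\pi(v)=\pi(\n v)$, while $\e v\notin\lambda$ disposes of the statement about $\e v$ trivially, and the assertion $\n x\in\cand(\rho)$ is verified by reading off $\rho$ at the neighbours of $\n x$ via the identity above. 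In the second sub-case, both $\n v$ and $\e v$ lie in $\mu$ with content $c(x)+1$, so $\rho$ and $\pi$ agree there, and each sub-claim reduces to applying the $\max+\min-\pi$ identity at the (at most two) cells of content $c(x)$ adjacent to $\n v$ or $\e v$ and comparing with the candidacy inequalities. Claims \refi{cx+1} and \refi{cx-1} are handled by entirely analogous case analyses, the minimality of $v$ now being on the side of $\rho$: in \refi{cx+1} it pins down the sign of $\rho(v)-\rho(\s\w v)$ and thereby decides whether $\w v$ or $\s v$ picks up candidacy in $\pi$, while the asymmetry between the two implications in \refi{cx-1} reflects the fact that minimality is needed only to rule out cells lying in $\cand(\rho)\setminus\cand(\pi)$.

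The principal difficulty is organisational rather than conceptual. Each of \refi{cx}, \refi{cx+1}, \refi{cx-1} splits into a handful of subcases according to the position of $v$ relative to $x$ and to the regions of $\n v,\e v,\s v,\w v$ in $\mu$ (which in turn depend on whether $\n x$ or $\w x$ becomes a new outer corner of $\mu$). The minimality hypothesis is the decisive input throughout: it controls which of the equalities $\pi(u)=\pi(\n u)$ or $\pi(u)=\pi(\w u)$ actually hold for cells $u$ of content $c(x)$ encountered along the way, and thereby pins down $\rho(u)-\pi(u)$ sufficiently to check the candidacy inequalities by direct substitution into the $\max+\min-\pi$ identity.
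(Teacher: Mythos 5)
Your overall strategy is right: exploit that $\zeta_{\lambda,x}$ only changes values on the content-$c(x)$ diagonal, note that the regions $\A,\B,\I,\O$ only change near $c(x)$, and then verify candidacy conditions cell by cell using the identity $\rho(u)=\max\{\pi(\n u),\pi(\w u)\}+\min\{\pi(\e u),\pi(\s u)\}-\pi(u)$. Claim~(i) and the easy sub-cases do fall out of this. But the sketch glosses over the one genuinely delicate point, and the way you describe the role of minimality suggests you haven't seen what actually makes those cases close.

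The difficulty is that ``direct substitution into the $\max+\min-\pi$ identity'' is not enough when the cell whose candidacy you want to certify lies in the $\A$-region, because then candidacy also requires a strict inequality against the \emph{north} neighbour, which lives two diagonals away from $c(x)$ and is not controlled by the identity. Concretely, in~(ii) after you compute $\rho(\n\w v)<\rho(\n v)$ you still have to rule out $\rho(\n v)=\rho(\n\n v)$ when $\n v\in\A_\mu$; similarly~(iii) needs $\rho(v)=\rho(\e v)$ when $v\in\A_\mu$, and~(iv) needs $\rho(\e\n v)=\rho(\n v)\leq\rho(v)$. Minimality of $v$ does not ``pin down which equalities on the $c(x)$-diagonal hold'' in any direct way; what it does, via \refl{cand}, is produce a candidate with content strictly greater than $c(x)+1$ whenever one of those equalities fails, and then claim~(i) transports that candidate back to the other side and contradicts minimality. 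The invocation of \refl{cand} as the bridge between ``no smaller candidate exists'' and a local equality is the essential step, and it is entirely absent from your outline. Without naming it, ``check the candidacy inequalities by direct substitution'' is a plan that will stall exactly at these $\A$-region subcases. Once you add this mechanism --- assume the offending equality, apply \refl{cand} to get a candidate of large content, use~(i) and minimality to contradict --- your decomposition into sub-cases does match the paper's proof.
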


\begin{proof}
Claim~\refi{candiff} is trivial.

\smallskip
Next consider claim~\refi{cx}.
From $v\in\cand(\pi)$ it follows that $\pi(v)>\pi(\w v)$.
Moreover $\pi(\s v)=\pi(\e\s v)\geq\pi(\e v)$ since $\pi(\e\s v)\notin\cand(\pi)$.

If $\pi(v)=\pi(\n v)$ then $\n v\in\mu$ and $\n v\in\A_{\mu}\cup\O_{\mu}$ because $c(v)=c(x)$ and $x$ is an outer corner of $\lambda$.
Furthermore $\pi(\n v)>\pi(\w v)$ implies
\begin{eq*}
\rho(\n\w v)
=\max\{\pi(\n\n\w v),\pi(\n\w\w v)\}-\pi(\n\w v)+\pi(\w v)\leq\pi(\w v)
<\pi(\n v)
=\rho(\n v).
\end{eq*}
Thus $\n v\in\cand(\rho)$ unless $\n v\in\A_{\mu}$ and $\rho(\n v)=\rho(\n\n v)$.
% $\n\n\w v\in\A_{\mu}$,
In that case, however, $\n\n v\in\A_{\mu}\cup\O_{\mu}$ and $\rho(\n\n\w v)<\rho(\n\n v)$.
By \refl{cand} there exists a candidate $u\in\cand(\rho)$ with $c(u)>c(x)+1$.
By \refi{candiff} this contradicts the minimality of $v$ in $\cand(\pi)$, and we conclude that $\n v\in\cand(\rho)$.
Moreover $\rho(v)=\pi(\e v)=\rho(\e v)$ and $\e v\notin\cand(\rho)$.

On the other hand, if $\pi(v)>\pi(\n v)$ then $v\neq x$ and $\e v\in\A_{\mu}\cup\O_{\mu}$.
We compute $\rho(v)<\rho(\e v)$, which yields $\e v\in\cand(\rho)$ by use of \refl{cand} and claim~\refi{candiff} as above.

\smallskip
The proof of claim~\refi{cx+1} is similar.
Here $\rho(\w v)<\rho(v)$ and $\s v\in\lambda$.
First suppose $\rho(v)\leq\rho(\s\w v)$.
Then $\w v, \s\w v\in\mu$.
We compute
\begin{eq*}
\pi(\w v)
=\max\{\rho(\n\w v),\rho(\w\w v)\}+\rho(v)-\rho(\w v)
>\rho(\w\w v)
=\pi(\w\w v),
\end{eq*}
and $\w v\in\cand(\pi)$ since $\w v\in\O_{\lambda}$.
If $\s v=x$ then $\pi(x)=\rho(\s\w v)$ and $\s v\notin\cand(\pi)$.
If $\s v\neq x$ then $\e\s v\in\mu$ and we claim that $\rho(\s v)=\rho(\e\s v)$.
To see this note that if $v\in\A_{\mu}$ then \refl{cand} and the minimality of $v$ in $\cand(\rho)$ imply $\rho(v)=\rho(\e v)$.
Hence $\rho(\s v)=\rho(\e\s v)$ because $\e\s v\notin\cand(\rho)$.
We conclude that $\rho(\e\s v)\leq\rho(\s\s v)$, $\pi(\s v)=\pi(\s\w v)$ and $\s v\notin\cand(\pi)$ as claimed.
% \begin{eq*}
% \pi(\s v)
% =\rho(\s\w v)+\rho(\e\s v)-\rho(\s v)
% =\rho(\s\w v)
% =\pi(\s\w v).
% \end{eq*}

On the other hand suppose $\rho(v)>\rho(\s\w v)$.
Then $\pi(\s v)\geq\rho(v)>\pi(\s\w v)$ and $\s v\in\cand(\pi)$.

\smallskip
Finally turn to the proof of claim~\refi{cx-1}.
If $v\in\cand(\pi)$ then $v\in\A_{\lambda}$ and $\pi(v)>\pi(\w v)$.
Hence $v\in\O_{\mu}$ and $v\in\cand(\rho)$.

Conversely if $v\in\cand(\rho)$ then $v\in\O_{\mu}$, $v\in\A_{\lambda}$ and $\pi(\w v)<\pi(v)$.
It remains to show $\pi(v)>\pi(\n v)$.
We are done if $\n v\notin\mu$.
Thus assume the contrary, which also yields $\e\n v\in\mu$ because $\e v\in\lambda$.
Clearly $\n v\in\I_{\mu}$ and $\e\n v\in\A_{\mu}\cup\O_{\mu}$.
The minimality of $v$ together with \refl{cand} provide that $\rho(\e\n v)=\rho(\n v)\leq\rho(v)$.
Consequently
\begin{eq*}
\pi(\n v)
=\max\{\rho(\n\w v),\rho(\n\n v)\}+\rho(\e\n v)-\rho(\n v)
=\max\{\rho(\n\w v),\rho(\n\n v)\}.
\end{eq*}
On the one hand $\rho(\n\w v)\leq\rho(\w v)<\rho(v)$.
On the other hand $\rho(\n\n v)=\rho(\n\n\w v)$, where we use again \refl{cand} and the minimality of $v$.
Hence $\rho(\n\n v)\leq\rho(\n\w)<\rho(v)$, which completes the proof.
\end{proof}

\begin{lem}{pakpath}
Let $\lambda$ be a partition, $x\in\lambda$ an outer corner, $\mu=\lambda-\{x\}$, $\pi\in\rpp_{\lambda,x}$, $\rho=\zeta_{\lambda,x}(\pi)$, $v=\min\cand(\pi)$, and $u=\min\cand(\rho)$ with respect to the content order.
\begin{enumerate}[(i)]
\myi{cvge}
If $c(v)>c(x)$ then $u=v$ and $Q(v,\pi)=Q(u,\rho)$.
\myi{cveq}
If $c(v)=c(x)$ then $\pi(v)=\pi(\n v)$ implies $u=\n v$ and $\pi(v)>\pi(\n v)$ implies $u=\e v$.
Moreover $Q(v,\pi)=(v,Q(u,\pi))$.
\myi{cvle_avoid}
If $c(v)<c(x)$ and $\omega(Q(v,\pi))\triangleleft x$ then $u=v$ and $Q(v,\pi)=Q(u,\rho)$.
\myi{cvle_touch}
If $c(v)<c(x)$ and $\omega(Q(v,\pi))=x$ then $u=v$ and $Q(v,\pi)=(Q(u,\rho),x)$.
\myi{cvle_cross}
Let $c(v)<c(x)$ and $x\triangleleft\omega(Q(v,\pi))$.
Then $u=v$ and there exists a cell $y\in Q(v,\pi)$ with $c(y)=c(x)$.
If $\pi(y)=\pi(\n u)$ then $Q(u,\rho)$ is obtained from $Q(v,\pi)$ by replacing $y$ with $\n\w y$.
If otherwise $\pi(y)>\pi(\n y)$ then $Q(v,\pi)=Q(u,\rho)$.
\end{enumerate}
\end{lem}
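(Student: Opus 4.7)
The argument is a case analysis driven by the position of $c(v)$ relative to $c(x)$ and by how $Q(v,\pi)$ interacts with the diagonal $D=\{u\in\lambda:c(u)=c(x)\}$. Two structural facts will be used throughout. First, $\rho$ agrees with $\pi$ outside $D$, so at a cell $u$ whose content and whose north-neighbour's content both differ from $c(x)$ the path-construction rule uses only unmodified values. Second, a north-east-path has strictly increasing contents and hence visits at most one cell of $D$. Combined with \refl{pakcand}, which already describes how $\cand(\pi)$ and $\cand(\rho)$ relate around content $c(x)$, these observations reduce each case to a local computation at the cells where the path meets $D$.

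Cases \refi{cvge} and \refi{cveq} follow essentially from \refl{pakcand}. In case \refi{cvge} every cell of $Q(v,\pi)$ lies east of $D$, so the construction uses coinciding values; the minimality of $v$ in $\cand(\pi)$ combined with \refl{pakcand} parts \refi{candiff}, \refi{cx}, \refi{cx+1} rules out spurious candidates of higher content in $\cand(\rho)$, yielding $u=v$ and $Q(u,\rho)=Q(v,\pi)$. For case \refi{cveq} the first step of the path is at $v\in D$, and \refl{pakcand} part \refi{cx} dictates whether $\n v$ or $\e v$ becomes the new minimum candidate; this choice exactly matches the first step of $Q(v,\pi)$, and from the second cell onward the path lies east of $D$ so construction continues identically. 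For cases \refi{cvle_avoid}, \refi{cvle_touch}, \refi{cvle_cross}, an induction on decreasing content using \refl{pakcand} parts \refi{candiff} and \refi{cx-1} shows $v\in\cand(\rho)$ and that $v$ remains the minimum, so $u=v$. The paths $Q(v,\pi)$ and $Q(u,\rho)$ then agree on all cells of content $<c(x)-1$ since the values consulted coincide there.

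In case \refi{cvle_avoid} the path passes through a unique $y\in D$ with $y\neq x$ and continues past it; the east-step rule at $y$ (\ie $\pi(y)>\pi(\n y)$) is transferred to the analogous rule in $\rho$ by expanding $\rho(\n y)$ via the defining formula, giving $\rho(y)>\rho(\n y)$. In case \refi{cvle_touch} the path ends at $x$, and the hypothesis $\pi\in\rpp_{\lambda,x}$ forces $\pi(x)=\pi(\w x)$; this translates to $\rho(\w x)$ satisfying the termination condition at $\w x\in\mu$, so $Q(u,\rho)$ terminates one step earlier and $Q(v,\pi)=(Q(u,\rho),x)$. The remaining case \refi{cvle_cross} is the hardest. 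Its subcase $\pi(y)>\pi(\n y)$ mirrors case \refi{cvle_avoid}. Its subcase $\pi(y)=\pi(\n y)$ forces the path in $\rho$ to be rerouted via $\n\w y$: one must verify both that the construction prescribes a north step at $\w y$ in $\rho$ (\ie $\rho(\w y)=\rho(\n\w y)$) and an east step at $\n\w y$ (\ie $\rho(\n\w y)>\rho(\n\n\w y)$), using the formula $\rho(\bullet)=\max\{\pi(\n\bullet),\pi(\w\bullet)\}+\min\{\pi(\e\bullet),\pi(\s\bullet)\}-\pi(\bullet)$ together with $y$ lying strictly north of the outer corner $x$ so that $\n\w y\in\mu$ and the neighbours entering the formula all lie in $\lambda$. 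I expect this reroute computation to be the main obstacle, since it combines candidate tracking, path replacement, and several simultaneous instances of the algebraic identity for $\rho$.
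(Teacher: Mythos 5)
Your overall decomposition follows the paper's, and you correctly identify that the candidate tracking reduces to \refl{pakcand} and that the reroute in the crossing case is the crux. However, there are several substantive gaps.

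First, you read both \refi{cvle_avoid} and \refi{cvle_cross} as cases in which $Q(v,\pi)$ meets the diagonal $\{w : c(w)=c(x)\}$ and continues past it, and you treat \refi{cvle_avoid} by checking only the east step at $y$. If the path were allowed to cross and take a north step at $y$ in case \refi{cvle_avoid}, the claimed identity $Q(v,\pi)=Q(u,\rho)$ would fail, since that is precisely the reroute handled in \refi{cvle_cross}. The intended reading -- consistent with the case labels, with the grouping of \refi{cvle_avoid} together with \refi{cvge} as ``nothing to do'' in the proof of \refl{commute}, and with the need for a cell \emph{after} $y$ in the conclusion of \refi{cvle_cross} -- is that in \refi{cvle_avoid} the path terminates strictly south-west of the diagonal, so no value consulted by the path construction is altered by $\zeta_{\lambda,x}$, and the conclusion follows at once after $u=v$ is established from \refl{pakcand}.

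Second, in the reroute of \refi{cvle_cross} you propose to justify the east step out of $\n\w y$ by verifying $\rho(\n\w y)>\rho(\n\n\w y)$. This is not the right criterion, and it need not hold. The path construction always steps east from a cell in $\I\cup\A$ irrespective of values, and $\n\w y$ lies on the content-$c(x)$ diagonal of $\mu$, which is contained in $\I_\mu$ because $\n\w x$ becomes an inner corner of $\mu$ after removing $x$. Conversely, you must note that $\w y\in\O_\mu\cup\B_\mu$ before the equality $\rho(\w y)=\rho(\n\w y)$ can be read as prescribing a north step; from a cell of $\I\cup\A$ the path would step east unconditionally.

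Third, after the reroute you assert that the construction ``continues identically'' from $\n y$ onwards. This needs an argument: cells of content greater than $c(x)$ keep their values under $\zeta_{\lambda,x}$, yet their region labels change on passing from $\lambda$ to $\mu$. The paper records exactly the compensating correspondence $\n y\in\B_{\lambda}$ if and only if $\n y\in\O_{\mu}$, and $\n y\in\I_{\lambda}$ if and only if $\n y\in\A_{\mu}$ (and analogously farther along the path), which is what keeps the north/east decisions invariant. Your write-up omits all of this region bookkeeping, which is where most of the actual verification lives.
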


\begin{proof}
Claim~\refi{cvge} follows from \refl{pakcand}~\refi{candiff}.
Claim~\refi{cveq} is a consequence of \refl{pakcand}~\refi{candiff}--\refi{cx+1}.
Claims~\refi{cvle_avoid} and~\refi{cvle_touch} follow from \refl{pakcand}~\refi{candiff}--\refi{cx-1}.

\smallskip
Finally to see claim~\refi{cvle_cross} note that \refl{pakcand}~\refi{candiff}--\refi{cx-1} imply $u=v$.
Moreover it is easily seen that $Q(v,\pi)$ contains a unique cell $y$ with $c(y)=c(x)$, and that the paths $Q(v,\pi)$ and $Q(u,\rho)$ coincide up to (but not necessarily including) that cell.

% Suppose the cell preceding $y$ in $Q(v,\pi)$ is $\s y$.
% Then $\e\s y\in\lambda$.
% Since $\pi-Q(v,\pi)$ is a reverse plane partition due to \reft{existsfactor} we obtain $\pi(\s y)<\pi(\e\s y)$ and $\e\s y\in\cand(\pi)$ contradicting the minimality of $v$.
By definition of $Q(v,\pi)$ the cell preceding $y$ in $Q(v,\pi)$ is $\w y$ since $\w y,\s y\in\I_{\lambda}\cup\A_{\lambda}$ if they lie in $\lambda$ at all.
Furthermore $\pi(\w y)=\pi(y)$ by minimality of $v$.

If $\pi(y)=\pi(\n y)$ then the cell proceeding $y$ in $Q(v,\pi)$ is $\n y$.
In particular $\n y, \n\w y\in\mu$
Note that $\w y\in\O_{\mu}\cup\B_{\mu}$.
Using $\pi(\n\w y)=\pi(\n\w\w y)$ since $\n\w y\notin\cand(\pi)$ we compute $\rho(\n\w y)=\rho(\w y)$. 
Thus the cell proceeding $\w y$ in $Q(u,\rho)$ is $\n\w y$.
The cell $\n\w y$ itself is by definition of $Q(u,\rho)$ proceeded by $\n y$ because $\n\w y\in\I_{\mu}$.
Moreover $\n y\in\B_{\lambda}$ if and only if $\n y\in\O_{\mu}$ and $\n y\in\I_{\lambda}$ if and only if $\n y\in\A_{\mu}$.
Thus the paths $Q(v,\pi)$ and $Q(u,\rho)$ coincide for all remaining cells after $\n y$.

If on the other hand $\pi(y)>\pi(\n y)$ then the cell proceeding $y$ in $Q(v,\pi)$ is $\e y$.
Using $\pi(\n\w\w y)=\pi(\n\w y)$ since $\n\w y\notin\cand(\pi)$ we compute $\rho(\n\w y)=\pi(\n y)<\rho(\w y)$.
Therefore the cell proceeding $\w y$ in $Q(u,\rho)$ is $y$, which is in turn proceeded by $\e y$.
The paths $Q(v,\pi)$ and $Q(u,\rho)$ coincide.
\end{proof}

Suppose two partitions $\lambda$ and $\mu$ both contain the cell $u$.
The rim-hook of $\lambda$ corresponding to $u$ might differ from the rim-hook of $\mu$ corresponding to $u$.
To avoid any ambiguity we denote these rim-hooks by $h_{\lambda}^{u}$ and $h_{\mu}^u$ respectively.
Similarly denote by $0_{\lambda}$ and $0_{\mu}$ the reverse plane partitions of shape $\lambda$ respectively $\mu$ with all entries equal to zero.

\smallskip
The following is the key lemma to understanding why our bijection coincides with the one defined by I.~Pak.
Both maps are defined recursively but the two recursions are very different in nature.
On the one hand $\xi$ is defined by inductively reducing the size of the partition.
On the other hand $\Phi$ is computed by inductively reducing the sum of the entries of a tableaux.
However, we can combine these reductions to great effect.
To be more precise, there exists a kind of commutation relation between them.

\begin{lem}{commute}
Let $\lambda$ be a partition, $x\in\lambda$ an outer corner , $\mu=\lambda-\{x\}$, $u_1,u_2,\dots,u_s\in\mu$ cells satisfying $u_i\leq u_{i+1}$ for all $i\in[s-1]$, and set $\pi=h_{\lambda}^{u_2}*\dots*h_{\lambda}^{u_s}*0_{\lambda}$.
Then
\begin{eq*}
\zeta_{\lambda,x}(h_{\lambda}^{u_1}*\pi)=h_{\mu}^{u_1}*\zeta_{\lambda,x}(\pi).
\end{eq*}
\end{lem}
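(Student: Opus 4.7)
The plan is to match the two sides of the desired identity via rim-hook extraction, relying on \refl{pakpath} to track the effect of $\zeta_{\lambda,x}$ on the relevant insertion paths. Set $\sigma=h_\lambda^{u_1}*\pi$. By \reft{welldef} the minimum of $\cand(\sigma)$ with respect to the content order $\tleq$ is $v_1=\alpha(P(h_\lambda^{u_1},\pi))$, and by \refl{factorpath} the reverse of the insertion path is $Q(v_1,\sigma)$, satisfying $\sigma-Q(v_1,\sigma)=\pi$. Now apply \refl{pakpath} to $\sigma$ and $\rho:=\zeta_{\lambda,x}(\sigma)$: this pinpoints $u:=\min\cand(\rho)$ and explicitly describes the north-east path $Q(u,\rho)$ in each of its five cases. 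Set $h':=h(u,\rho)$ and $\rho':=\rho-Q(u,\rho)$. By \refl{factoriff} we have $h'*\rho'=\rho=\zeta_{\lambda,x}(\sigma)$, so the task reduces to showing $h'=h_\mu^{u_1}$ and $\rho'=\zeta_{\lambda,x}(\pi)$.

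For the first identity one reads off $\alpha(Q(u,\rho))$ and $\omega(Q(u,\rho))$ from the case list of \refl{pakpath} and verifies that they equal $(\mu'_{j_1},j_1)$ and $(i_1,\mu_{i_1})$, where $u_1=(i_1,j_1)$; this forces $h'=h_\mu^{u_1}$. For the second identity note that $\rho'(w)$ and $\zeta_{\lambda,x}(\pi)(w)$ automatically agree whenever $w\in\mu$ has $c(w)\neq c(x)$, since both equal $\pi(w)$ there. For cells $w\in\mu$ with $c(w)=c(x)$ one expands both sides using the defining formula for $\zeta_{\lambda,x}$, the relation $\sigma=\pi+P(h_\lambda^{u_1},\pi)$, and the precise comparison of $Q(v_1,\sigma)$ with $Q(u,\rho)$ provided by \refl{pakpath}.

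The main obstacle is case~\refi{cvle_cross} of \refl{pakpath}, in which $Q(v_1,\sigma)$ and $Q(u,\rho)$ differ at a single cell $y$ of content $c(x)$, with $y$ replaced by $\n\w y$ exactly when $\sigma(y)=\sigma(\n y)$. There one must verify that this single-cell substitution accounts precisely for the difference between the rim-hooks $h_\lambda^{u_1}$ and $h_\mu^{u_1}$, and that after plugging values of $\rho$ into the defining formula for $\zeta_{\lambda,x}$ and subtracting the modified path $Q(u,\rho)$, the result reproduces the values of $\zeta_{\lambda,x}(\pi)$ at the two or three diagonal cells touched by the substitution. The remaining four cases of \refl{pakpath} are comparatively easy, because in each of them $Q(v_1,\sigma)$ and $Q(u,\rho)$ coincide (up to an endpoint adjustment reflecting exactly the shift between $\alpha(h_\lambda^{u_1})$ and $\alpha(h_\mu^{u_1})$, or $\omega(h_\lambda^{u_1})$ and $\omega(h_\mu^{u_1})$), so the verification of $\rho'=\zeta_{\lambda,x}(\pi)$ becomes a short direct computation.
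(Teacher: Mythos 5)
Your proposal is correct and follows essentially the same strategy as the paper's proof: reduce via \reft{welldef}, \refl{factorpath}, \reft{inj} and \refl{factoriff} to comparing a single rim-hook extraction from $\rho=\zeta_{\lambda,x}(\sigma)$ with $\zeta_{\lambda,x}$ applied to the corresponding extraction from $\sigma$, then use \refl{pakpath} to see that the two extraction paths agree off the diagonal of content $c(x)$ and check the remaining diagonal cells case by case. The only thing you leave implicit is that $\sigma=h_{\lambda}^{u_1}*\pi\in\rpp_{\lambda,x}$, which is the hypothesis needed to invoke \refl{pakpath}; the paper states this fact explicitly (without proof), and it is worth at least recording it since the whole argument hinges on being able to apply \refl{pakpath} to $\sigma$.
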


To understand the statement of \refl{commute} the reader should compare Figures~\ref{Figure:332} and~\ref{Figure:333}.
Each reverse plane partition in the second row of \reff{332} is the image of the corresponding reverse plane partition in the second row of \reff{333} under the map $\zeta_{\lambda,x}$, where $\lambda=(3,3,3)$ and $x=(3,3)$.

\myfig{}{332}{
\begin{tikzpicture}
\begin{scope}
\draw(0,1)--(2,1)(0,2)--(3,2)(1,3)--(1,0)(2,3)--(2,1)
;
\draw[line width=1pt](0,0)--(2,0)--(2,1)--(3,1)--(3,3)--(0,3)--cycle
;
\draw[xshift=5mm,yshift=5mm]
(0,2)node{\small{$1$}}
(1,2)node{\small{$1$}}
(2,2)node{\small{$2$}}
(0,1)node{\small{$0$}}
(1,1)node{\small{$1$}}
(2,1)node{\small{$0$}}
(0,0)node{\small{$3$}}
(1,0)node{\small{$0$}}
;
\end{scope}
\begin{scope}[xshift=4cm]
\draw(0,1)--(2,1)(0,2)--(3,2)(1,3)--(1,0)(2,3)--(2,1)
;
\draw[line width=1pt](0,0)--(2,0)--(2,1)--(3,1)--(3,3)--(0,3)--cycle
;
\draw[xshift=5mm,yshift=5mm]
(0,2)node{\small{$0$}}
(1,2)node{\small{$1$}}
(2,2)node{\small{$2$}}
(0,1)node{\small{$0$}}
(1,1)node{\small{$1$}}
(2,1)node{\small{$0$}}
(0,0)node{\small{$3$}}
(1,0)node{\small{$0$}}
;
\end{scope}
\begin{scope}[xshift=8cm]
\draw(0,1)--(2,1)(0,2)--(3,2)(1,3)--(1,0)(2,3)--(2,1)
;
\draw[line width=1pt](0,0)--(2,0)--(2,1)--(3,1)--(3,3)--(0,3)--cycle
;
\draw[xshift=5mm,yshift=5mm]
(0,2)node{\small{$0$}}
(1,2)node{\small{$1$}}
(2,2)node{\small{$2$}}
(0,1)node{\small{$0$}}
(1,1)node{\small{$1$}}
(2,1)node{\small{$0$}}
(0,0)node{\small{$0$}}
(1,0)node{\small{$0$}}
;
\end{scope}
\begin{scope}[xshift=12cm]
\draw(0,1)--(2,1)(0,2)--(3,2)(1,3)--(1,0)(2,3)--(2,1)
;
\draw[line width=1pt](0,0)--(2,0)--(2,1)--(3,1)--(3,3)--(0,3)--cycle
;
\draw[xshift=5mm,yshift=5mm]
(0,2)node{\small{$0$}}
(1,2)node{\small{$0$}}
(2,2)node{\small{$2$}}
(0,1)node{\small{$0$}}
(1,1)node{\small{$1$}}
(2,1)node{\small{$0$}}
(0,0)node{\small{$0$}}
(1,0)node{\small{$0$}}
;
\end{scope}
\begin{scope}[xshift=16cm]
\draw(0,1)--(2,1)(0,2)--(3,2)(1,3)--(1,0)(2,3)--(2,1)
;
\draw[line width=1pt](0,0)--(2,0)--(2,1)--(3,1)--(3,3)--(0,3)--cycle
;
\draw[xshift=5mm,yshift=5mm]
(0,2)node{\small{$0$}}
(1,2)node{\small{$0$}}
(2,2)node{\small{$2$}}
(0,1)node{\small{$0$}}
(1,1)node{\small{$0$}}
(2,1)node{\small{$0$}}
(0,0)node{\small{$0$}}
(1,0)node{\small{$0$}}
;
\end{scope}
\begin{scope}[xshift=20cm]
\draw(0,1)--(2,1)(0,2)--(3,2)(1,3)--(1,0)(2,3)--(2,1)
;
\draw[line width=1pt](0,0)--(2,0)--(2,1)--(3,1)--(3,3)--(0,3)--cycle
;
\draw[xshift=5mm,yshift=5mm]
(0,2)node{\small{$0$}}
(1,2)node{\small{$0$}}
(2,2)node{\small{$1$}}
(0,1)node{\small{$0$}}
(1,1)node{\small{$0$}}
(2,1)node{\small{$0$}}
(0,0)node{\small{$0$}}
(1,0)node{\small{$0$}}
;
\end{scope}
\begin{scope}[xshift=24cm]
\draw(0,1)--(2,1)(0,2)--(3,2)(1,3)--(1,0)(2,3)--(2,1)
;
\draw[line width=1pt](0,0)--(2,0)--(2,1)--(3,1)--(3,3)--(0,3)--cycle
;
\draw[xshift=5mm,yshift=5mm]
(0,2)node{\small{$0$}}
(1,2)node{\small{$0$}}
(2,2)node{\small{$0$}}
(0,1)node{\small{$0$}}
(1,1)node{\small{$0$}}
(2,1)node{\small{$0$}}
(0,0)node{\small{$0$}}
(1,0)node{\small{$0$}}
;
\end{scope}
\begin{scope}[yshift=-4cm]
\begin{scope}
\draw(0,1)--(2,1)(0,2)--(3,2)(1,3)--(1,0)(2,3)--(2,1)
;
\draw[line width=1pt](0,0)--(2,0)--(2,1)--(3,1)--(3,3)--(0,3)--cycle
;
\draw[xshift=5mm,yshift=5mm]
(0,2)node{\small{$0$}}
(1,2)node{\small{$0$}}
(2,2)node{\small{$0$}}
(0,1)node{\small{$0$}}
(1,1)node{\small{$0$}}
(2,1)node{\small{$0$}}
(0,0)node{\small{$0$}}
(1,0)node{\small{$0$}}
;
\end{scope}
\begin{scope}[xshift=4cm]
\draw(0,1)--(2,1)(0,2)--(3,2)(1,3)--(1,0)(2,3)--(2,1)
;
\draw[line width=1pt](0,0)--(2,0)--(2,1)--(3,1)--(3,3)--(0,3)--cycle
;
\draw[xshift=5mm,yshift=5mm]
(0,2)node{\small{$0$}}
(1,2)node{\small{$0$}}
(2,2)node{\small{$1$}}
(0,1)node{\small{$0$}}
(1,1)node{\small{$1$}}
(2,1)node{\small{$1$}}
(0,0)node{\small{$1$}}
(1,0)node{\small{$1$}}
;
\end{scope}
\begin{scope}[xshift=8cm]
\draw(0,1)--(2,1)(0,2)--(3,2)(1,3)--(1,0)(2,3)--(2,1)
;
\draw[line width=1pt](0,0)--(2,0)--(2,1)--(3,1)--(3,3)--(0,3)--cycle
;
\draw[xshift=5mm,yshift=5mm]
(0,2)node{\small{$0$}}
(1,2)node{\small{$0$}}
(2,2)node{\small{$1$}}
(0,1)node{\small{$0$}}
(1,1)node{\small{$1$}}
(2,1)node{\small{$1$}}
(0,0)node{\small{$4$}}
(1,0)node{\small{$4$}}
;
\end{scope}
\begin{scope}[xshift=12cm]
\draw(0,1)--(2,1)(0,2)--(3,2)(1,3)--(1,0)(2,3)--(2,1)
;
\draw[line width=1pt](0,0)--(2,0)--(2,1)--(3,1)--(3,3)--(0,3)--cycle
;
\draw[xshift=5mm,yshift=5mm]
(0,2)node{\small{$0$}}
(1,2)node{\small{$0$}}
(2,2)node{\small{$2$}}
(0,1)node{\small{$1$}}
(1,1)node{\small{$2$}}
(2,1)node{\small{$2$}}
(0,0)node{\small{$4$}}
(1,0)node{\small{$4$}}
;
\end{scope}
\begin{scope}[xshift=16cm]
\draw(0,1)--(2,1)(0,2)--(3,2)(1,3)--(1,0)(2,3)--(2,1)
;
\draw[line width=1pt](0,0)--(2,0)--(2,1)--(3,1)--(3,3)--(0,3)--cycle
;
\draw[xshift=5mm,yshift=5mm]
(0,2)node{\small{$0$}}
(1,2)node{\small{$0$}}
(2,2)node{\small{$2$}}
(0,1)node{\small{$2$}}
(1,1)node{\small{$3$}}
(2,1)node{\small{$3$}}
(0,0)node{\small{$4$}}
(1,0)node{\small{$4$}}
;
\end{scope}
\begin{scope}[xshift=20cm]
\draw(0,1)--(2,1)(0,2)--(3,2)(1,3)--(1,0)(2,3)--(2,1)
;
\draw[line width=1pt](0,0)--(2,0)--(2,1)--(3,1)--(3,3)--(0,3)--cycle
;
\draw[xshift=5mm,yshift=5mm]
(0,2)node{\small{$0$}}
(1,2)node{\small{$1$}}
(2,2)node{\small{$3$}}
(0,1)node{\small{$2$}}
(1,1)node{\small{$3$}}
(2,1)node{\small{$3$}}
(0,0)node{\small{$4$}}
(1,0)node{\small{$4$}}
;
\end{scope}
\begin{scope}[xshift=24cm]
\draw(0,1)--(2,1)(0,2)--(3,2)(1,3)--(1,0)(2,3)--(2,1)
;
\draw[line width=1pt](0,0)--(2,0)--(2,1)--(3,1)--(3,3)--(0,3)--cycle
;
\draw[xshift=5mm,yshift=5mm]
(0,2)node{\small{$0$}}
(1,2)node{\small{$1$}}
(2,2)node{\small{$4$}}
(0,1)node{\small{$2$}}
(1,1)node{\small{$3$}}
(2,1)node{\small{$4$}}
(0,0)node{\small{$4$}}
(1,0)node{\small{$4$}}
;
\end{scope}
\end{scope}
\end{tikzpicture}
}

\myfig{}{333}{
\begin{tikzpicture}
\begin{scope}
\draw(0,1)--(3,1)(0,2)--(3,2)(1,3)--(1,0)(2,3)--(2,0)
;
\draw[line width=1pt](0,0)--(3,0)--(3,3)--(0,3)--cycle
;
\draw[xshift=5mm,yshift=5mm]
(0,2)node{\small{$1$}}
(1,2)node{\small{$1$}}
(2,2)node{\small{$2$}}
(0,1)node{\small{$0$}}
(1,1)node{\small{$1$}}
(2,1)node{\small{$0$}}
(0,0)node{\small{$3$}}
(1,0)node{\small{$0$}}
(2,0)node{\small{$0$}}
;
\end{scope}
\begin{scope}[xshift=4cm]
\draw(0,1)--(3,1)(0,2)--(3,2)(1,3)--(1,0)(2,3)--(2,0)
;
\draw[line width=1pt](0,0)--(3,0)--(3,3)--(0,3)--cycle
;
\draw[xshift=5mm,yshift=5mm]
(0,2)node{\small{$0$}}
(1,2)node{\small{$1$}}
(2,2)node{\small{$2$}}
(0,1)node{\small{$0$}}
(1,1)node{\small{$1$}}
(2,1)node{\small{$0$}}
(0,0)node{\small{$3$}}
(1,0)node{\small{$0$}}
(2,0)node{\small{$0$}}
;
\end{scope}
\begin{scope}[xshift=8cm]
\draw(0,1)--(3,1)(0,2)--(3,2)(1,3)--(1,0)(2,3)--(2,0)
;
\draw[line width=1pt](0,0)--(3,0)--(3,3)--(0,3)--cycle
;
\draw[xshift=5mm,yshift=5mm]
(0,2)node{\small{$0$}}
(1,2)node{\small{$1$}}
(2,2)node{\small{$2$}}
(0,1)node{\small{$0$}}
(1,1)node{\small{$1$}}
(2,1)node{\small{$0$}}
(0,0)node{\small{$0$}}
(1,0)node{\small{$0$}}
(2,0)node{\small{$0$}}
;
\end{scope}
\begin{scope}[xshift=12cm]
\draw(0,1)--(3,1)(0,2)--(3,2)(1,3)--(1,0)(2,3)--(2,0)
;
\draw[line width=1pt](0,0)--(3,0)--(3,3)--(0,3)--cycle
;
\draw[xshift=5mm,yshift=5mm]
(0,2)node{\small{$0$}}
(1,2)node{\small{$0$}}
(2,2)node{\small{$2$}}
(0,1)node{\small{$0$}}
(1,1)node{\small{$1$}}
(2,1)node{\small{$0$}}
(0,0)node{\small{$0$}}
(1,0)node{\small{$0$}}
(2,0)node{\small{$0$}}
;
\end{scope}
\begin{scope}[xshift=16cm]
\draw(0,1)--(3,1)(0,2)--(3,2)(1,3)--(1,0)(2,3)--(2,0)
;
\draw[line width=1pt](0,0)--(3,0)--(3,3)--(0,3)--cycle
;
\draw[xshift=5mm,yshift=5mm]
(0,2)node{\small{$0$}}
(1,2)node{\small{$0$}}
(2,2)node{\small{$2$}}
(0,1)node{\small{$0$}}
(1,1)node{\small{$0$}}
(2,1)node{\small{$0$}}
(0,0)node{\small{$0$}}
(1,0)node{\small{$0$}}
(2,0)node{\small{$0$}}
;
\end{scope}
\begin{scope}[xshift=20cm]
\draw(0,1)--(3,1)(0,2)--(3,2)(1,3)--(1,0)(2,3)--(2,0)
;
\draw[line width=1pt](0,0)--(3,0)--(3,3)--(0,3)--cycle
;
\draw[xshift=5mm,yshift=5mm]
(0,2)node{\small{$0$}}
(1,2)node{\small{$0$}}
(2,2)node{\small{$1$}}
(0,1)node{\small{$0$}}
(1,1)node{\small{$0$}}
(2,1)node{\small{$0$}}
(0,0)node{\small{$0$}}
(1,0)node{\small{$0$}}
(2,0)node{\small{$0$}}
;
\end{scope}
\begin{scope}[xshift=24cm]
\draw(0,1)--(3,1)(0,2)--(3,2)(1,3)--(1,0)(2,3)--(2,0)
;
\draw[line width=1pt](0,0)--(3,0)--(3,3)--(0,3)--cycle
;
\draw[xshift=5mm,yshift=5mm]
(0,2)node{\small{$0$}}
(1,2)node{\small{$0$}}
(2,2)node{\small{$0$}}
(0,1)node{\small{$0$}}
(1,1)node{\small{$0$}}
(2,1)node{\small{$0$}}
(0,0)node{\small{$0$}}
(1,0)node{\small{$0$}}
(2,0)node{\small{$0$}}
;
\end{scope}
\begin{scope}[yshift=-4cm]
\begin{scope}
\draw(0,1)--(3,1)(0,2)--(3,2)(1,3)--(1,0)(2,3)--(2,0)
;
\draw[line width=1pt](0,0)--(3,0)--(3,3)--(0,3)--cycle
;
\draw[xshift=5mm,yshift=5mm]
(0,2)node{\small{$0$}}
(1,2)node{\small{$0$}}
(2,2)node{\small{$0$}}
(0,1)node{\small{$0$}}
(1,1)node{\small{$0$}}
(2,1)node{\small{$0$}}
(0,0)node{\small{$0$}}
(1,0)node{\small{$0$}}
(2,0)node{\small{$0$}};
\end{scope}
\begin{scope}[xshift=4cm]
\draw(0,1)--(3,1)(0,2)--(3,2)(1,3)--(1,0)(2,3)--(2,0)
;
\draw[line width=1pt](0,0)--(3,0)--(3,3)--(0,3)--cycle
;
\draw[xshift=5mm,yshift=5mm]
(0,2)node{\small{$0$}}
(1,2)node{\small{$0$}}
(2,2)node{\small{$1$}}
(0,1)node{\small{$0$}}
(1,1)node{\small{$0$}}
(2,1)node{\small{$1$}}
(0,0)node{\small{$1$}}
(1,0)node{\small{$1$}}
(2,0)node{\small{$1$}}
;
\end{scope}
\begin{scope}[xshift=8cm]
\draw(0,1)--(3,1)(0,2)--(3,2)(1,3)--(1,0)(2,3)--(2,0)
;
\draw[line width=1pt](0,0)--(3,0)--(3,3)--(0,3)--cycle
;
\draw[xshift=5mm,yshift=5mm]
(0,2)node{\small{$0$}}
(1,2)node{\small{$0$}}
(2,2)node{\small{$1$}}
(0,1)node{\small{$0$}}
(1,1)node{\small{$0$}}
(2,1)node{\small{$1$}}
(0,0)node{\small{$4$}}
(1,0)node{\small{$4$}}
(2,0)node{\small{$4$}}
;
\end{scope}
\begin{scope}[xshift=12cm]
\draw(0,1)--(3,1)(0,2)--(3,2)(1,3)--(1,0)(2,3)--(2,0)
;
\draw[line width=1pt](0,0)--(3,0)--(3,3)--(0,3)--cycle
;
\draw[xshift=5mm,yshift=5mm]
(0,2)node{\small{$0$}}
(1,2)node{\small{$0$}}
(2,2)node{\small{$2$}}
(0,1)node{\small{$1$}}
(1,1)node{\small{$1$}}
(2,1)node{\small{$2$}}
(0,0)node{\small{$4$}}
(1,0)node{\small{$4$}}
(2,0)node{\small{$4$}}
;
\end{scope}
\begin{scope}[xshift=16cm]
\draw(0,1)--(3,1)(0,2)--(3,2)(1,3)--(1,0)(2,3)--(2,0)
;
\draw[line width=1pt](0,0)--(3,0)--(3,3)--(0,3)--cycle
;
\draw[xshift=5mm,yshift=5mm]
(0,2)node{\small{$0$}}
(1,2)node{\small{$0$}}
(2,2)node{\small{$2$}}
(0,1)node{\small{$2$}}
(1,1)node{\small{$2$}}
(2,1)node{\small{$3$}}
(0,0)node{\small{$4$}}
(1,0)node{\small{$4$}}
(2,0)node{\small{$4$}}
;
\end{scope}
\begin{scope}[xshift=20cm]
\draw(0,1)--(3,1)(0,2)--(3,2)(1,3)--(1,0)(2,3)--(2,0)
;
\draw[line width=1pt](0,0)--(3,0)--(3,3)--(0,3)--cycle
;
\draw[xshift=5mm,yshift=5mm]
(0,2)node{\small{$1$}}
(1,2)node{\small{$1$}}
(2,2)node{\small{$3$}}
(0,1)node{\small{$2$}}
(1,1)node{\small{$2$}}
(2,1)node{\small{$3$}}
(0,0)node{\small{$4$}}
(1,0)node{\small{$4$}}
(2,0)node{\small{$4$}}
;
\end{scope}
\begin{scope}[xshift=24cm]
\draw(0,1)--(3,1)(0,2)--(3,2)(1,3)--(1,0)(2,3)--(2,0)
;
\draw[line width=1pt](0,0)--(3,0)--(3,3)--(0,3)--cycle
;
\draw[xshift=5mm,yshift=5mm]
(0,2)node{\small{$1$}}
(1,2)node{\small{$1$}}
(2,2)node{\small{$4$}}
(0,1)node{\small{$2$}}
(1,1)node{\small{$3$}}
(2,1)node{\small{$4$}}
(0,0)node{\small{$4$}}
(1,0)node{\small{$4$}}
(2,0)node{\small{$4$}}
;
\end{scope}
\end{scope}
\end{tikzpicture}
}

\begin{proof}[Proof of \refl{commute}]
Set $\rho=\zeta_{\lambda,x}(h_{\lambda}^{u_1}*\pi)$ and $\sigma=h_{\lambda}^{u_1}*\pi$.
Note that $\sigma\in\rpp_{\lambda,x}$.
It follows from \reft{inj} and \refl{pakpath} that the lexicographic factorisation of $\rho$ begins with $h_{\mu}^{u_1}$.
Let $u=\min\cand(\rho)$ and $v=\min\cand(\sigma)$ with respect to the content order.
It suffices to show that
\begin{eq}{commute}
\rho-Q(u,\rho)
=
\zeta_{\lambda,x}(\sigma-Q(v,\sigma)).
\end{eq}
Set $\tilde{\rho}=\rho-Q(u,\rho)$ and $\tilde{\sigma}=\zeta_{\lambda,x}(\sigma-Q(v,\sigma))$.
By \refl{pakpath} if $y\in\mu$ with $c(y)\neq c(x)$ then $y\in Q(u,\rho)$ if and only if $y\in Q(v,\sigma)$.
Consequently it suffices to check equality in \refq{commute} for cells $y\in\mu$ with $c(y)=c(x)$ that have a neighbour on the path $Q(u,\rho)$.
In cases~\refi{cvge} or~\refi{cvle_avoid} of \refl{pakpath} there remains nothing to do.

\smallskip
Consider case~\refi{cveq} of \refl{pakpath} and suppose that $\sigma(v)=\sigma(\n v)$.
Then
\begin{eq}{ii1}
\tilde{\rho}(\n\w v)
=\max\{\sigma(\n\n\w v),\sigma(\n\w\w v)\}+\sigma(\w v)-\sigma(\n\w v)
=\tilde{\sigma}(\n\w v).
\end{eq}
Moreover if $v\neq x$ then
\begin{eq}{ii2}
\tilde{\rho}(v)
=\sigma(\n v)+\min\{\sigma(\e v),\sigma(\s v)\}-\sigma(v)
=\tilde{\sigma}(v).
\end{eq}
If $\sigma(v)>\sigma(\n v)$ then $v\neq x$ and $\sigma(\s v)<\sigma(\e\s v)$ since $\e\s v\notin\cand(\sigma)$.
It follows that $\tilde{\rho}(v)=\tilde{\sigma}(v)$ as in \refq{ii1}, and if $\e\s v\neq x$ then $\tilde{\rho}(\e\s v)=\tilde{\sigma}(\e\s v)$ as in \refq{ii2}.

\smallskip
In case~\refi{cvle_touch} of \refl{pakpath} we are done if $\n\w x\notin\mu$.
Therefore assume $\n\w x\in\mu$.
% Since $\tilde{\sigma}$ is a reverse plane partition
Since $Q(v,\sigma)$ terminates in $x$, we conclude $\sigma(x)>\sigma(\n x)$.
Moreover $\sigma(\n\w x)=\sigma(\n\w\w x)$ because $\n\w x\notin\cand(\sigma)$.
Hence $\tilde{\rho}(\n\w x)=\sigma(\n x)=\tilde{\sigma}(\n\w x)$.

\smallskip
Finally suppose we are in case~\refi{cvle_cross} and let $y\in Q(v,\sigma)$ be the cell with $c(y)=c(x)$.
Then $\w y$ precedes $y$ in $Q(v,\sigma)$ and $\sigma(\w y)=\sigma(y)$ since $y\notin\cand(\sigma)$.
Similarly $\sigma(\n\w y)=\sigma(\n\w\w y)$.
If $\sigma(y)=\sigma(\n y)$ then the cell $\n y$ proceeds $y$ in $Q(v,\sigma)$, and $\tilde{\rho}(\n\w y)=\sigma(y)-1=\tilde{\sigma}(\n\w y)$ because $\n\w y\in Q(u,\rho)$.
Moreover if $y\neq x$ then
\begin{eq*}
\tilde{\rho}(y)
=\min\{\sigma(\e y),\sigma(\s y)\}
=\tilde{\sigma}(y).
\end{eq*}
There remains the case $\sigma(y)>\sigma(\n y)$ in which $\e y$ proceeds $y$ in $Q(v,\sigma)$ and $\sigma(\e\s y)=\sigma(\s y)$ as $\e\s y\notin\cand(\sigma)$.
If $y\in\mu$ then $\tilde{\rho}(y)=\sigma(\e y)-1=\tilde{\sigma}(y)$ and if $\e\s\in\mu$ then
\begin{eq*}
\tilde{\rho}(\e\s y)
=\min\{\sigma(\e\e\s y),\sigma(\e\s\s y)\}
=\tilde{\sigma}(\e\s y).
\qedhere
\end{eq*}
\end{proof}

% \begin{cor}{F}
% Let $\lambda$ be a partition with outer corner $x\in\lambda$, and let $\mu=\lambda-\{x\}$.
% Let 
% $u_1\leq u_2\leq\dots\leq u_s$ be cells of $\mu$, let $h_1\leq h_2\leq\dots\leq h_s$ be rim-hooks of $\lambda$ corresponding to these cells, and $f_1\leq f_2\leq\dots\leq f_s$ the rim-hooks of $\mu$ corresponding to these cells.
% Then
% \begin{eq*}
% F(h_1*_{\lambda}\cdots*_{\lambda}h_s*_{\lambda}0)
% =f_1*_{\mu}\cdots*_{\mu}f_s*_{\mu}0).
% \end{eq*}
% \end{cor}

Having done most of the work, we conclude the main result of this section.

\begin{thm}{pak}
Let $\lambda$ be a partition.
Then $\Phi:\tab_{\lambda}\to\rpp_{\lambda}$ and $\xi_{\lambda}:\rpp_{\lambda}\to\tab_{\lambda}$ are inverse bijections.
\end{thm}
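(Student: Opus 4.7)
The plan is to prove the equivalent statement $\xi_\lambda\circ\Phi_\lambda=\mathrm{id}_{\tab_\lambda}$ by induction on $|\lambda|$; since $\Phi_\lambda$ is already known to be a bijection by \reft{bij}, this yields the theorem. The base case $\lambda=\emptyset$ is trivial. For the inductive step I pick the outer corner $x$ to be the \emph{smallest} cell of $\lambda$ in the reverse lexicographic order: if $k$ is the largest row index with $\lambda_k=\lambda_1$, then $x=(k,\lambda_1)$ does the job, and this $x$ is simultaneously the smallest cell of $\lambda$ in $\leq$ and, among cells of $\O\cup\A$, the smallest in the content order $\tleq$. Set $\mu=\lambda-\{x\}$.

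Fix $t\in\tab_\lambda$, let $m=t(x)$, and let $t'\in\tab_\lambda$ agree with $t$ on $\mu$ but satisfy $t'(x)=0$. Set $\sigma=\Phi_\lambda(t')$ and $\pi=\Phi_\lambda(t)$. Because $x$ is an outer corner, the rim-hook $h_\lambda^x$ is the single cell $\{x\}$, so inserting it into any reverse plane partition simply adds one to the value at $x$. Since $h_\lambda^x$ is also the minimum rim-hook of $\lambda$, \reft{bij} implies that the lexicographic factorisation of $\pi$ consists of $m$ copies of $h_\lambda^x$ followed by the lexicographic factorisation of $\sigma$; hence $\pi(x)=\sigma(x)+m$ and $\pi(u)=\sigma(u)$ for $u\neq x$.

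The first half of the verification is $\xi_\lambda(\pi)(x)=t(x)$, which reduces to showing $\sigma\in\rpp_{\lambda,x}$, i.e.\ $\sigma(x)=\max\{\sigma(\n x),\sigma(\w x)\}$. Assume for contradiction $\sigma(x)>\max\{\sigma(\n x),\sigma(\w x)\}$. Then $x\in\cand(\sigma)$, and by the choice of $x$ it is the minimum of $\cand(\sigma)$ in the content order. Since $\sigma(x)>\sigma(\n x)$ and $\e x\notin\lambda$, the construction of $Q(x,\sigma)$ can take neither a north nor an east step, so $Q(x,\sigma)=(x)$ and $h(x,\sigma)=h_\lambda^x$. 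But then \reft{surj} says that the lexicographic factorisation of $\sigma$ begins with $h_\lambda^x$, contradicting $t'(x)=0$ together with the uniqueness of \reft{inj}. Therefore $\sigma\in\rpp_{\lambda,x}$, and $\xi_\lambda(\pi)(x)=\pi(x)-\max\{\pi(\n x),\pi(\w x)\}=m=t(x)$.

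For the second half I need $\xi_\mu(\zeta_{\lambda,x}(\pi))(u)=t(u)$ for $u\in\mu$. Two observations reduce this to the induction hypothesis. First, the defining formula of $\zeta_{\lambda,x}$ applies only to cells on the diagonal through $x$ in $\mu$, and for every such cell $u$ the four neighbours $\n u,\w u,\e u,\s u$ all have content $c(x)\pm 1\neq c(x)$, so none of them can equal $x$; hence the formula never references $\pi(x)$, and $\zeta_{\lambda,x}(\pi)=\zeta_{\lambda,x}(\sigma)$. Second, the lexicographic factorisation of $\sigma$ uses only rim-hooks $h_\lambda^w$ with $w\in\mu$, so iterated application of \refl{commute} together with $\zeta_{\lambda,x}(0_\lambda)=0_\mu$ yields $\zeta_{\lambda,x}(\sigma)=\Phi_\mu(t|_\mu)$. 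The induction hypothesis applied to $\mu$ then gives $\xi_\mu(\Phi_\mu(t|_\mu))=t|_\mu$, which finishes the proof. The bulk of the technical work has been absorbed by \refl{commute}; the only genuinely new idea here is the choice of outer corner $x$, which isolates the $h_\lambda^x$ insertions so cleanly that the commute lemma takes care of everything else.
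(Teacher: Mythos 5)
Your proof is correct and follows essentially the same strategy as the paper's own: both choose the $\leq$-minimal outer corner $x$, verify the value at $x$ directly, and reduce the remaining cells to $\mu=\lambda-\{x\}$ via Lemma~\ref{Lemma:commute}. The only real difference is a modest streamlining: the paper runs a double induction on $\abs{\lambda}+\abs{\pi}$, decrementing $\pi(x)$ one step at a time when $\pi(x)>\max\{\pi(\n x),\pi(\w x)\}$, whereas you peel off all $t(x)$ copies of $h_\lambda^x$ at once so that only $\abs{\lambda}$ needs to decrease; you also spell out the candidate argument showing $\Phi(t')\in\rpp_{\lambda,x}$ when $t'(x)=0$, a fact the paper uses but does not justify.
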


\begin{proof}
Let $x\in\lambda$ be the minimum with respect to the reverse lexicographic order and set $\mu=\lambda-\{x\}$.
Let $\pi\in\rpp_{\lambda}$.
We prove the claim by induction on $\abs{\lambda}+\abs{\pi}$.

If $\pi(x)>\max\{\pi(\n x),\pi(\w x)\}$ then define a reverse plane partition by setting $\tilde{\pi}(x)=\pi(x)-1$ and $\tilde{\pi}(u)=\pi(u)$ for all $u\in\mu$.
Using the definitions of $\xi$ and $\Phi$ and the induction hypothesis we obtain
\begin{eq*}
\xi_{\lambda}(\pi)(x)
=\xi_{\lambda}(\tilde{\pi})(x)+1
=\Phi^{-1}(\tilde{\pi})(x)+1
=\Phi^{-1}(\pi)(x),
\end{eq*}
and
\begin{eq*}
\xi_{\lambda}(\pi)(u)
=\xi_{\lambda}(\tilde{\pi})(u)
=\Phi^{-1}(\tilde{\pi})(u)
=\Phi^{-1}(\pi)(u)
\end{eq*}
for all $u\in\mu$.

If $\pi(x)=\max\{\pi(\n x),\pi(\w x)\}$ then the lexicographic factorisation $\pi=h_{\lambda}^{u_1}*\dots*h_{\lambda}^{u_s}*0_{\lambda}$ satisfies $u_i\in\mu$ for all $i\in[s]$.
We compute $\xi_{\lambda}(\pi)(x)=0=\Phi^{-1}(\pi)(x)$ and
\begin{eq*}
\begin{split}
\xi_{\lambda}(\pi)(u)
&=\xi_{\mu}\circ\zeta_{\lambda,x}(\pi)(u)\\
&=\Phi^{-1}\circ\zeta_{\lambda,x}(h_{\lambda}^{u_1}*\dots*h_{\lambda}^{u_s}*0_{\lambda})(u)\\
&=\Phi^{-1}(h_{\mu}^{u_1}*\dots*h_{\mu}^{u_s}*0_{\mu})(u)\\
&=\Phi^{-1}(h_{\lambda}^{u_1}*\dots*h_{\lambda}^{u_s}*0_{\lambda})(u)
=\Phi^{-1}(\pi)(u)
\end{split}
\end{eq*}
for all $u\in\mu$, where we use first the induction hypothesis and then \refl{commute}.
\end{proof}

In the remainder of the article we discuss what can be said about the relation between the bijection $\Phi$, the RSK correspondence and the Hillman--Grassl correspondence.

\smallskip
A \emph{semi-standard Young tableau} of shape $\lambda$ is a reverse plane partition $\pi:\lambda\to\N$ that is column-strict, that is, $\pi(u)>\pi(\n u)$ for all cells $u\in\lambda$.
A \emph{standard Young tableau} of shape $\lambda$ is a reverse plane partition that is also a bijection $\pi:\lambda\to[n]$, where $n=\abs{\lambda}$.
% Let $\ssyt_{\lambda}$ and $\syt_{\lambda}$ denote the sets of semi-standard Young tableaux and standard Young tableaux respectively.

The Hillman--Grassl correspondence $\HG:\rpp_{\lambda}\to\tab_{\lambda}$ is viewed as a bijection from reverse plane partitions to tableaux.
The Robinson--Schensted--Knuth correspondence $\RSK$ is viewed as a map that sends each $t\in\tab_{\lambda}$, where the partition $\lambda=(n^n)$ is a square, to a pair $(P,Q)$ of semi-standard Young tableaux of the same shape $\mu$, where $\mu$ is a partition of size $\abs{t}$.
Moreover, $\RSK$ restricts to a bijection from permutation matrices, which form a subset of $\tab_{\lambda}$, to pairs $(P,Q)$ of standard Young tableaux of the same shape $\mu$, where $\mu$ ranges over all partitions of $n$.
For a thorough introduction to these maps the reader is referred to~\cite[Ch.~7]{StanleyEC2}.

\smallskip
Let $\lambda$ be a partition.
It is very easy to read off the $k$-th trace of a reverse plane partition $\pi\in\rpp_{\lambda}$ directly from the corresponding tableau $t=\Phi^{-1}(\pi)$.
For $k\in\Z$ define the rectangle
\begin{eq*}
R_k=\big\{(i,j)\in\lambda:i\leq i_k\text{ and }j\leq j_k\big\},
\end{eq*}
where $(i_k,j_k)\in\lambda$ is the south-easternmost cell of $\lambda$ with content $k$.
If $\lambda$ contains no cell of content $k$ then let $R_k=\emptyset$.

\begin{prop}{diag}
Let $\lambda$ be a partition, $k\in\Z$, $t\in\tab_{\lambda}$, and set $\pi=\Phi(t)$.
Then
\begin{eq*}
\tr_k(\pi)
=\sum_{u\in R_k}t(u).
\end{eq*}
\end{prop}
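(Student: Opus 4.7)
The plan is to reduce the proposition to the content-tracking identity already appearing in the proof of \reft{Gansner}, combined with induction on $\abs{t}=\sum_{u\in\lambda}t(u)$. The crucial combinatorial step is to reinterpret the rectangle $R_k$ in terms of content intervals of rim-hooks. Specifically, I would first establish the following equivalence: for $(i,j)\in\lambda$ and $k\in\Z$,
\begin{equation*}
(i,j)\in R_k \quad\Longleftrightarrow\quad j-\lambda_j'\leq k\leq\lambda_i-i.
\end{equation*}

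To prove this equivalence I would separately verify that $i\leq i_k$ is equivalent to $\lambda_i-i\geq k$, and that $j\leq j_k$ is equivalent to $j-\lambda_j'\leq k$. For the first: if $i\leq i_k$, monotonicity of $\lambda$ yields $\lambda_i\geq\lambda_{i_k}\geq j_k=i_k+k\geq i+k$; conversely, if $i>i_k$, the maximality of $(i_k,j_k)$ among cells of content $k$ forces $(i_k+1,j_k+1)\notin\lambda$, whence $\lambda_{i_k+1}\leq i_k+k$ and thus $\lambda_i-i\leq\lambda_{i_k+1}-(i_k+1)<k$. The column-side statement is symmetric after swapping rows and columns using $\lambda'$. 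For $k$ that is not the content of any cell of $\lambda$, both sides of the equivalence are empty.

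Once this equivalence is in hand, the induction on $\abs{t}$ is immediate. The base case $\abs{t}=0$ gives $\pi=0$, and both sides of the asserted identity vanish. For the inductive step, let $\pi=h^{u_1}*h_2*\cdots*h_s*0$ be the lexicographic factorisation of $\pi=\Phi(t)$ furnished by \reft{bij}, set $\pi'=h_2*\cdots*h_s*0$, and let $t'$ differ from $t$ only in $t'(u_1)=t(u_1)-1$, so that $\Phi(t')=\pi'$ by the construction of $\Phi$. The computation used in the proof of \reft{Gansner} shows that passing from $\pi'$ to $h^{u_1}*\pi'=\pi$ increases $\tr_k$ by exactly $1$ for each $k$ in the interval $[j_1-\lambda_{j_1}',\lambda_{i_1}-i_1]$ (where $u_1=(i_1,j_1)$) and leaves $\tr_k$ unchanged otherwise. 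By the equivalence above this increase is precisely the indicator of the event $u_1\in R_k$. Combining with the induction hypothesis $\tr_k(\pi')=\sum_{u\in R_k}t'(u)$ then yields $\tr_k(\pi)=\sum_{u\in R_k}t(u)$, as required.

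The only non-routine step is verifying the combinatorial description of $R_k$; beyond that, the argument is a direct bookkeeping reformulation of the trace identity already underlying the proof of \reft{Gansner}.
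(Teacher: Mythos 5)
Your proposal is correct and takes essentially the same route as the paper's proof, which observes that inserting $h^u$ increases $\tr_k$ precisely when $h^u$ contains a cell of content $k$, which happens if and only if $u\in R_k$. You have simply filled in the details — verifying the content-interval description of $R_k$ and running the straightforward induction on $\abs{t}$ that the paper leaves implicit.
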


\begin{proof}
It is easy to see that the insertion of a hook $h^u$ increases $\tr_k$ if and only if $h^u$ contains a cell with content $k$.
But this is the case if and only if $u\in R_k$.
\end{proof}

A classical result states that for the Hillman--Grassl correspondence one can not only read off the $k$-th trace, but also determine the partition formed by the entries of $\pi$ in the $k$-th diagonal of $\lambda$, solely by looking at the tableau $\HG(\pi)$.
This is achieved via the RSK correspondence.
See for example~\cite[Thm.~3.3]{Gansner1981}.
%\cite[Thm.~3.5]{GraHil:functions}

An analogous statement for the map $\Phi$ that refines \refp{diag} is due to A.~Garver and R.~Patrias.

A \emph{weak south-east-chain} in $\lambda$ is a sequence of cells $(i_1,j_1),\dots,(i_s,j_s)\in\lambda$ such that $i_k\leq i_{k+1}$ and $j_k\leq j_{k+1}$ for all $k\in[s-1]$.
A \emph{strict north-east-chain} in $\lambda$ is a sequence of cells $(i_1,j_1),\dots,(i_s,j_s)\in\lambda$ such that $i_k>i_{k+1}$ and $j_k<j_{k+1}$ for all $k\in[s-1]$.
The length $s$ of such a chain $C$ is denoted by $\abs{C}$.

\begin{thm}{GK}
\textnormal{\cite[Thm.~6.1]{GarverPatrias}}
Let $\lambda$ be a partition, $k\in\Z$, $t\in\tab_{\lambda}$, $\pi=\Phi(t)$, and let $\mu$ denote the partition given by the entries in the $k$-th diagonal of $\pi$.
Then for all $r\in\N$
\begin{eq*}
\mu_1+\dots+\mu_r=\max\big\{\abs{C_1}+\dots+\abs{C_r}\big\},
\end{eq*}
where the maximum is taken over all families of weak south-east-chains $C_1,\dots,C_r$ in $R_k$ that contain each cell $u\in R_k$ at most $t(u)$ times.
Moreover for all $r\in\N$
\begin{eq*}
\mu_1'+\dots+\mu_r'=\max\big\{\abs{D_1}+\dots+\abs{D_r}\big\},
\end{eq*}
where the maximum is taken over all families of strict north-east-chains $D_1,\dots,D_r$ in $R_k$ that contain each cell $u\in R_k$ at most $t(u)$ times.
\end{thm}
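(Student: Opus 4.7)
The plan is to reduce the statement to Greene's theorem for RSK via an explicit encoding of the restriction of $t$ to $R_k$, and track the evolution of the $k$-th diagonal of $\Phi(t)$ as rim-hooks are inserted in lexicographic order. First I would note that Proposition~\refp{diag} already gives $|\mu|=\tr_k(\pi)=\sum_{u\in R_k}t(u)$, which corresponds to using every cell of $R_k$ with full multiplicity in a maximal chain family; hence the total size is consistent, and the substantive content of the theorem is the refinement to each value of $r$.

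The key structural fact I would establish is that for every $u=(i,j)\in R_k$, the rim-hook $h^u$ contains exactly one cell of content $k$, so every south-west insertion path $P(h^u,\pi)$ meets the $k$-th diagonal in exactly one cell $y$, and the passage $\pi\mapsto h^u*\pi$ increments exactly one entry of the diagonal (namely $\pi(y)$). For $u\notin R_k$ the $k$-th diagonal is unaffected by the insertion. I would then proceed by induction on $|t|$, peeling off the rim-hooks one by one according to the lexicographic factorisation $\pi=h^{u_1}*\cdots*h^{u_s}*0$ guaranteed by Theorem~\reft{bij}.

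The heart of the argument is to identify the evolution of the $k$-th diagonal with the shape growth in an RSK-type row insertion. Concretely, I would encode the subsequence of the $u_i$ lying in $R_k$ as a biword (using the row and column indices as the two lines) and argue that the location $y$ where $P(h^{u_i},\tilde\pi)$ crosses the $k$-th diagonal matches exactly the cell targeted by RSK row-insertion of the corresponding biword letter into the tableau built from the previous steps. Matching the two bumping rules reduces to translating the structural rules for $P(h,\pi)$, namely moving west on $\A\cup\O$ and south on $\B\cup\I$ as dictated by compatibility conditions~\refq{shh} and~\refq{shp}, into the classical RSK bumping rule at the diagonal; Lemmas~\ref{Lemma:crossing1} and~\ref{Lemma:crossing2} will be needed to control interactions with previously inserted hooks. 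Once this correspondence is secured, Greene's classical theorem applied to the biword yields both identities simultaneously: weak south-east-chains in $R_k$ with multiplicity bounded by $t$ correspond to weakly increasing subsequences, delivering the $\mu_1+\cdots+\mu_r$ formula, while strict north-east-chains correspond to strictly decreasing subsequences, delivering the $\mu_1'+\cdots+\mu_r'$ formula.

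The main obstacle will be the matching of rim-hook bumping with RSK bumping at the diagonal crossing; this is precisely the point at which the common framework developed by A.~Garver and R.~Patrias~\cite{GarverPatrias} does the heavy lifting, by realising both $\Phi$ and the Hillman--Grassl map as two specialisations of a single RSK-type construction. A direct proof avoiding that framework would require a delicate case analysis at the cell $y$, distinguishing whether the path $P(h^{u_i},\tilde\pi)$ enters the diagonal from the east versus from the south, and showing that the resulting new diagonal entry equals the outcome of the RSK bump; I expect this case analysis to be the most technical step, but, once completed, the remainder of the proof becomes a formal application of Greene's theorem.
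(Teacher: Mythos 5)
This theorem is cited as \cite[Thm.~6.1]{GarverPatrias} and is not proven in the paper; the paper merely states it, illustrates it with an example, and comments on its relation to RSK and to earlier observations of I.~Pak and S.~Hopkins. There is therefore no internal proof to compare against, and you are effectively being asked to reconstruct the Garver--Patrias argument.

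Your outline correctly identifies the architecture such a proof must have: (a) each rim-hook $h^u$ with $u\in R_k$ crosses the $k$-th diagonal in exactly one cell, so the diagonal grows by one unit per insertion (this is the same observation that makes \refp{diag} work); (b) the order of insertion induced by the lexicographic factorisation of $\pi$ is precisely what lets one read the cells of $R_k$ as a biword; (c) once the bumping rule for $P(h^u,\cdot)$ restricted to the $k$-th diagonal is shown to coincide with RSK row insertion, Greene's theorem finishes both identities at once. These are exactly the right ingredients, and the translation from weak south-east-chains/strict north-east-chains in $R_k$ (with multiplicities bounded by $t$) to weakly increasing/strictly decreasing subsequences of the biword is standard Greene--Kleitman bookkeeping.

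The genuine gap is the one you flag yourself: step~(c) is asserted, not argued. Showing that the cell where the insertion path $P(h^{u_i},\tilde\pi)$ hits the $k$-th diagonal agrees with the cell targeted by row-inserting the corresponding biword letter requires analysing how the compatibility conditions~\refq{shh} and~\refq{shp} interact with the diagonal as $\tilde\pi$ evolves, and in particular one has to handle the fact that $P(h^{u_i},\tilde\pi)$ depends on the previously inserted hooks through $\tilde\pi$ itself, so the diagonal crossing point is not a function of $u_i$ alone. Lemmas~\ref{Lemma:crossing1} and~\ref{Lemma:crossing2} control ordering relations between paths but do not by themselves pin down the bump. This is precisely the content of Garver and Patrias's framework, and without it (or an explicit diagonal-level case analysis replacing it) the proposal remains a plan rather than a proof. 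For a self-contained argument you would need to carry out that case analysis; as written, the proposal punts on the single step where all the work is.
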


For example consider the partition $\mu=(4,3,1)$ in the diagonal of content $k=0$ of the reverse plane partition $\pi$ in \reff{rsk}.
In this case $R_0=\lambda=(3,3,3)$.
We may chose $C_1=(1,1)(3,1)(3,1)(3,1)$, $C_2=(1,2)(1,3)(1,3)$, $C_3=(2,2)$, $D_1=(3,1)(2,2)(1,3)$, $D_2=(3,1)(1,2)$, $D_3=(3,1)(1,3)$, and $D_4=(1,1)$.
Then $\mu_1+\dots+\mu_r=\abs{C_1}+\dots+\abs{C_r}$ and $\mu_1'+\dots+\mu_r'=\abs{D_1}+\dots+\abs{D_r}$ for all $r$.

\smallskip
\reft{GK} relates the map $\Phi$ to $\RSK$.
See \reff{rsk}.
I.~Pak was aware of this connection between the map $\xi$ and RSK although he provides no proof in~\cite{Pak:hook_length_formula_geometric}.
The analogue of \reft{GK} for the map $\xi$ can be obtained by complementing the ideas of \cite[Sec.~4]{Hopkins:rsk} with the results of either V.~Danilov and G.~Koshevoy~\cite{DK:octahedron_recurrence_RSK} or M.~Farber, S.~Hopkins and W.~Trongsiriwat~\cite{FHT}.

\reft{pak} can be deduced from \reft{GK} and the analogue statement for the map $\xi$.
The proof given in this paper is more direct and does not use Greene--Kleitman invariants.

\myfig{
The partitions given by the diagonals of $\pi$ correspond to the shapes of the semi-standard Young tableaux $P$ and $Q$ restricted to the numbers $1,\dots,k$ for $k=1,2,3$.
}{rsk}{
\begin{tikzpicture}
\begin{scope}
\draw(0,1)--(3,1)(0,2)--(3,2)(1,3)--(1,0)(2,3)--(2,0)
;
\draw[line width=1pt](0,0)--(3,0)--(3,3)--(0,3)--cycle
;
\draw[xshift=5mm,yshift=5mm]
(0,2)node{\small{$1$}}
(1,2)node{\small{$1$}}
(2,2)node{\small{$4$}}
(0,1)node{\small{$2$}}
(1,1)node{\small{$3$}}
(2,1)node{\small{$4$}}
(0,0)node{\small{$4$}}
(1,0)node{\small{$4$}}
(2,0)node{\small{$4$}}
;
\draw(4,0)node{\small{$\pi$}}
;
\end{scope}
\begin{scope}[xshift=9cm]
\draw(0,1)--(3,1)(0,2)--(3,2)(1,3)--(1,0)(2,3)--(2,0)
;
\draw[line width=1pt](0,0)--(3,0)--(3,3)--(0,3)--cycle
;
\draw[xshift=5mm,yshift=5mm]
(0,2)node{\small{$1$}}
(1,2)node{\small{$1$}}
(2,2)node{\small{$2$}}
(0,1)node{\small{$0$}}
(1,1)node{\small{$1$}}
(2,1)node{\small{$0$}}
(0,0)node{\small{$3$}}
(1,0)node{\small{$0$}}
(2,0)node{\small{$0$}}
;
\draw[->,line width=1pt,xshift=4cm,yshift=12mm](0,0)--node[anchor=south]{\small{$\RSK$}}(2,0);
\draw[->,line width=1pt,xshift=-3cm,yshift=12mm](2,0)--node[anchor=south]{\small{$\Phi$}}(0,0);
\end{scope}
\begin{scope}[xshift=17cm]
\draw(0,1)--(1,1)(0,2)--(3,2)(1,3)--(1,1)(2,3)--(2,1)(3,3)--(3,2)
;
\draw[line width=1pt](0,0)--(1,0)--(1,1)--(3,1)--(3,2)--(4,2)--(4,3)--(0,3)--cycle
;
\draw[xshift=5mm,yshift=5mm]
(0,2)node{\small{$1$}}
(1,2)node{\small{$1$}}
(2,2)node{\small{$1$}}
(3,2)node{\small{$1$}}
(0,1)node{\small{$2$}}
(1,1)node{\small{$2$}}
(2,1)node{\small{$3$}}
(0,0)node{\small{$3$}}
;
\draw(3,0)node{\small{$P$}};
\end{scope}
\begin{scope}[xshift=22cm]
\draw(0,1)--(1,1)(0,2)--(3,2)(1,3)--(1,1)(2,3)--(2,1)(3,3)--(3,2)
;
\draw[line width=1pt](0,0)--(1,0)--(1,1)--(3,1)--(3,2)--(4,2)--(4,3)--(0,3)--cycle
;
\draw[xshift=5mm,yshift=5mm]
(0,2)node{\small{$1$}}
(1,2)node{\small{$1$}}
(2,2)node{\small{$1$}}
(3,2)node{\small{$1$}}
(0,1)node{\small{$2$}}
(1,1)node{\small{$3$}}
(2,1)node{\small{$3$}}
(0,0)node{\small{$3$}}
;
\draw(3,0)node{\small{$Q$}};
\end{scope}
\end{tikzpicture}
}

\smallskip
Finally, combining \reft{GK} and its counterpart for the Hillman--Grassl correspondence, one finds the following relation between the bijections $\HG$ and $\Phi$ in the case of standard Young tableaux respectively permutation matrices.

\begin{thm}{syt}
Let $\lambda=(n^n)$ be a square partition, and $\pi\in\rpp_{\lambda}$ be a reverse plane partition such that $\tr_{k}(\pi)=\tr_{-k}(\pi)=n-k$ for all $k\in\{0,\dots,n-1\}$.
Then $\Phi\circ\HG(\pi)$ is obtained by inserting in each diagonal the conjugate of the partition in the corresponding diagonal of $\pi$.
\end{thm}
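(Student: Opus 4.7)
The plan is to use the trace conditions to force $t := \HG(\pi)$ into a permutation matrix, and then compare two Greene--Kleitman-type theorems --- \reft{GK} for $\Phi$ and Gansner's classical result for $\HG$ --- to deduce that the diagonal partitions of $\Phi(\HG(\pi))$ are the conjugates of those of $\pi$.

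First I would show that $t := \HG(\pi)$ is a permutation matrix. Applying Gansner's refined identity cellwise to $\HG$ gives $\tr_k(\pi)=\sum_u t(u)$, summed over $u=(i,j)\in\lambda$ such that $j-\lambda'_j\leq k\leq\lambda_i-i$. For $\lambda=(n^n)$ this simplifies to $i\leq n-k$ when $k\geq 0$ and $j\leq n+k$ when $k\leq 0$, so the hypothesis $\tr_k(\pi)=n-|k|$ for $|k|<n$ forces every row sum and every column sum of $t$ to equal $1$ (by taking successive differences $\tr_k(\pi)-\tr_{k+1}(\pi)=1$). Hence $t$ is the permutation matrix of some $\sigma\in\S_n$.

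Next I would invoke \reft{GK} on $t$ to describe the diagonal-$k$ partition $\mu^{(k)}$ of $\Phi(t)=\Phi(\HG(\pi))$: its partial sums and those of its conjugate are computed by maxima of $r$-families of weak south-east-chains, respectively strict north-east-chains, in $R_k$ using each cell at most $t(u)$ times. Because $t$ is a permutation matrix the admissible chain families are disjoint increasing, respectively decreasing, subsequences of $\sigma$ inside $R_k$; by Greene's theorem these maxima compute $\sh(\RSK(\sigma|_{R_k}))$ and its conjugate, so $\mu^{(k)}=\sh(\RSK(\sigma|_{R_k}))$.

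The classical Gansner theorem for $\HG$~\cite[Thm.~3.3]{Gansner1981} expresses the diagonal-$k$ partition $\nu^{(k)}$ of $\pi=\HG^{-1}(t)$ via the opposite Greene--Kleitman convention, yielding $\nu^{(k)}=\sh(\RSK(\sigma|_{R_k}))'$. Combining, $\mu^{(k)}=(\nu^{(k)})'$ for every $k$, which is the claim. The main obstacle I anticipate is verifying that Gansner's result for $\HG$ really uses the chain convention \emph{dual} to that of \reft{GK} (so that it outputs the conjugate shape while \reft{GK} outputs the shape itself); once this duality is pinned down --- either by a direct comparison of chain types or by routing both statements through Greene's theorem on $\sigma|_{R_k}$ as above --- the theorem follows immediately. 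A minor auxiliary point is that optimising chains for both invariants may be restricted to $R_k$, since cells of $t$ outside $R_k$ have rim-hooks avoiding content $k$ and hence cannot affect the diagonal-$k$ partition under either bijection.
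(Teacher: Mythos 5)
Your proposal is correct and is essentially the argument the paper intends: the paper gives no explicit proof of this theorem, only the remark that it follows by ``combining \reft{GK} and its counterpart for the Hillman--Grassl correspondence'' (Gansner's Theorem~3.3), which is precisely what you do. Your preliminary observation that the trace hypotheses force $\HG(\pi)$ to be a permutation matrix (via unit row/column sums in $(n^n)$, read off from Gansner's $q^{H(u)}$ formula) is a necessary detail the paper leaves implicit, and your reduction of both diagonal invariants to $\sh(\RSK(\sigma|_{R_k}))$ via Greene's theorem correctly pins down the conjugation.
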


\begin{thm}{rsk}
Let $\lambda=(n^n)$ be a square partition, and $\sigma:\lambda\to\N$ be a tableau corresponding to a permutation matrix with $\RSK(\sigma)=(P,Q)$.
Then $\RSK\circ\HG\circ\Phi(\sigma)=(P',Q')$, where $P'$ and $Q'$ denote the conjugate standard Young tableaux of $P$ and $Q$.
\end{thm}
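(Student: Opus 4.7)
The plan is to combine Theorem \ref{Theorem:syt} with Theorem \ref{Theorem:GK} and the classical RSK growth principle, in order to read off both tableaux in $\RSK(\tau)$, where $\tau = \HG\circ\Phi(\sigma)$.

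First I would verify that $\pi = \Phi(\sigma)$ meets the trace hypothesis of Theorem \ref{Theorem:syt}. By Proposition \ref{Proposition:diag}, $\tr_k(\pi) = \sum_{u \in R_k}\sigma(u)$, and for a permutation matrix $\sigma$ on $\lambda = (n^n)$ the rectangle $R_k$ is the top $n-k$ rows (when $k \geq 0$) or the leftmost $n+k$ columns (when $k < 0$), so it contains exactly $n - |k|$ cells with $\sigma(u)=1$. Theorem \ref{Theorem:syt} then gives that the $k$-th diagonal partition of $\Phi(\tau) = \Phi\HG(\pi)$ is the conjugate of the $k$-th diagonal partition of $\pi$.

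Next I would apply Theorem \ref{Theorem:GK} combined with Greene's theorem. Because Theorem \ref{Theorem:GK} describes the $k$-th diagonal partition of $\Phi(t)$ by the same weak-SE and strict-NE chain invariants of $t|_{R_k}$ that Greene's theorem uses to describe $\sh\RSK(t|_{R_k})$, the two partitions agree: the $k$-th diagonal partition of $\Phi(t)$ equals $\sh\RSK(t|_{R_k})$ for every $t \in \tab_\lambda$. Applying this to $t = \sigma$ and $t = \tau$ and using the conjugation above yields
\begin{eq*}
\sh\RSK(\tau|_{R_k}) = \bigl(\sh\RSK(\sigma|_{R_k})\bigr)' \qquad \text{for every } k \in \Z.
\end{eq*}
Specialising $k = n - m$ makes $R_k$ the top $m$ rows, so $\sh\RSK(\sigma|_{R_k}) = \sh(Q|_{\leq m})$ and $\sh\RSK(\tau|_{R_k}) = \sh(Q_\tau|_{\leq m})$, where $(P_\tau, Q_\tau) = \RSK(\tau)$; specialising $k = m - n$ makes $R_k$ the leftmost $m$ columns and produces the analogous equality for $P$ and $P_\tau$. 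Hence $\sh(P_\tau|_{\leq m}) = \sh(P'|_{\leq m})$ and $\sh(Q_\tau|_{\leq m}) = \sh(Q'|_{\leq m})$ for every $m \in \{0, 1, \dots, n\}$. Comparing the sizes of these shapes forces every row sum and column sum of $\tau$ to equal $1$, so $\tau$ itself is a permutation matrix and $P_\tau$, $Q_\tau$ are standard Young tableaux; since a standard Young tableau is determined by the sequence of shapes of its restrictions, we conclude $P_\tau = P'$ and $Q_\tau = Q'$.

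The main obstacle is the bookkeeping of conventions: one must verify that the Greene--Kleitman invariants in Theorem \ref{Theorem:GK} agree with the usual statement of Greene's theorem for the row/column reading of RSK, and that the rectangles $R_{n-m}$ and $R_{m-n}$ really coincide with the row- and column-prefixes of $\sigma$ (and $\tau$) that drive the growth of $Q$ (and $P$). Once these alignments are made explicit, everything reduces to routine tracking of shapes.
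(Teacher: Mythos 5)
Your proof is correct and fleshes out the paper's extremely terse justification, which is only the single sentence ``combining Theorem~\ref{Theorem:GK} and its counterpart for the Hillman--Grassl correspondence, one finds the following relation.'' Your route — first verifying the trace hypothesis of Theorem~\ref{Theorem:syt} via Proposition~\ref{Proposition:diag}, then reducing the claim to a statement about the diagonal partitions of $\pi$ and $\Phi(\tau)$, and finally translating both into shapes of restricted RSK tableaux via Theorem~\ref{Theorem:GK} together with Greene's theorem — is a faithful realisation of that sketch. The deduction that all row and column sums of $\tau$ equal $1$ (and hence that $\tau$ is again a permutation matrix) is a nice detail that the paper leaves implicit, and the final step that a standard Young tableau is determined by the chain of shapes of its restrictions is correct. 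The only caveat, which you already flag yourself, is the convention-matching between the paper's weak south-east / strict north-east chains and the Greene--Kleitman invariants in the standard statement of Greene's theorem for RSK on matrices; under the usual conventions these do agree, so the argument stands.
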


\begin{cor}{involution}
The map $\HG\circ\Phi$ defines an involution on permutations.
\end{cor}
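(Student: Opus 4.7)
The plan is to deduce the corollary directly from \reft{rsk} together with the basic fact that conjugation of standard Young tableaux is an involution. Let $\sigma$ be a permutation matrix in $\tab_{\lambda}$ with $\lambda=(n^n)$ and write $\RSK(\sigma)=(P,Q)$, where $P,Q$ are standard Young tableaux of some common shape $\mu\vdash n$. By \reft{rsk} we have $\RSK\circ\HG\circ\Phi(\sigma)=(P',Q')$, the pair of conjugate tableaux. Since the conjugate of a standard Young tableau is again a standard Young tableau (of conjugate shape $\mu'\vdash n$), the pair $(P',Q')$ lies in the image of the RSK-bijection restricted to permutation matrices. Hence $\tau\defeq\HG\circ\Phi(\sigma)$ is itself a permutation matrix of shape $\lambda$.

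Next, I apply \reft{rsk} a second time, now with $\tau$ in place of $\sigma$. Since $\RSK(\tau)=(P',Q')$, the theorem yields
\begin{eq*}
\RSK\circ\HG\circ\Phi(\tau)=\bigl((P')',(Q')'\bigr)=(P,Q)=\RSK(\sigma).
\end{eq*}
Because $\RSK$ is a bijection between permutation matrices and pairs of standard Young tableaux of the same shape, we conclude that $\HG\circ\Phi(\tau)=\sigma$, that is,
\begin{eq*}
(\HG\circ\Phi)\circ(\HG\circ\Phi)(\sigma)=\sigma.
\end{eq*}
Thus $\HG\circ\Phi$ squares to the identity on permutation matrices and is therefore an involution.

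There is essentially no obstacle once \reft{rsk} is available: the whole argument reduces to the observation that conjugation of standard Young tableaux is an involution and that $\RSK$ is a bijection. The only minor point worth verifying is that $\tau=\HG\circ\Phi(\sigma)$ is still a permutation matrix, which follows from the fact that the RSK correspondence restricts to a bijection between permutation matrices and pairs of standard Young tableaux of the same shape, and the image $(P',Q')$ remains such a pair.
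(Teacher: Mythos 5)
Your proof is correct and is essentially the argument the paper leaves implicit: the corollary is stated without proof as an immediate consequence of \reft{rsk}, and you have simply filled in the obvious details — applying \reft{rsk} twice, using that tableau conjugation is an involution, and using the bijectivity of $\RSK$ restricted to permutation matrices. The one point you rightly flag, that $\tau=\HG\circ\Phi(\sigma)$ is again a permutation matrix, is handled correctly via the standard fact that $\RSK$ outputs a pair of \emph{standard} Young tableaux precisely when the input matrix is a permutation matrix.
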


Note that the map $\HG\circ\Phi$ is not an involution in general.
% \smallskip
On the flip side this means that one can define a non-trivial variation of RSK as in \reff{rsk} by using the Hillman--Grassl correspondence instead of the map $\Phi$.
To the best of the authors knowledge this idea has not been pursued so far.
Another open question brought up by S.~Hopkins is whether there exists an alternative description of the Hillman--Grassl correspondence in the style of the map $\xi$.

%--------------------------------------------------

%\newpage
\providecommand{\bysame}{\leavevmode\hbox to3em{\hrulefill}\thinspace}
\providecommand{\MR}{\relax\ifhmode\unskip\space\fi MR }
% \MRhref is called by the amsart/book/proc definition of \MR.
\providecommand{\MRhref}[2]{%
  \href{http://www.ams.org/mathscinet-getitem?mr=#1}{#2}
}
\providecommand{\href}[2]{#2}

\end{document}